\newcommand\wwhat[1]{%
\savestack{\tmpbox}{\stretchto{%
  \scaleto{%
    \scalerel*[\widthof{\ensuremath{#1}}]{\kern-.6pt\bigwedge\kern-.6pt}%
    {\rule[-\textheight/2]{1ex}{\textheight}}%WIDTH-LIMITED BIG WEDGE
  }{\textheight}% 
}{0.5ex}}%
\stackon[1pt]{#1}{\tmpbox}%
}
\setlist[enumerate]{leftmargin=1.5em}
\setlist[itemize]{leftmargin=1.5em}
\definecolor{green}{rgb}{0,0.8,0} % Redefines the color green.
\newtheorem{maintheorem}{Theorem}
\newtheorem{theorem}{Theorem}[section]
\newtheorem{lemma}[theorem]{Lemma}
\newtheorem{proposition}[theorem]{Proposition}
\newtheorem{conjecture}[theorem]{Conjecture}
\theoremstyle{definition}
\newtheorem{definition}[theorem]{Definition}
\theoremstyle{remark}
\newtheorem{remark}[theorem]{Remark}
\numberwithin{equation}{section}
\newcommand{\RN}[1]{%
  \textup{\uppercase\expandafter{\romannumeral#1}}%
}
\newcommand{\lnrm}[1]
{\left\Vert#1\right\Vert}
\newcommand{\nnrm}[1]{{\vert\kern-0.25ex\vert\kern-0.25ex\vert #1 
    \vert\kern-0.25ex\vert\kern-0.25ex\vert}}
\newcommand{\supp}{{\mathrm{supp}}\,}
\newcommand{\ud}{\mathrm{d}}
\newcommand{\rd}{\partial}
\newcommand{\nb}{\nabla}
\newcommand{\alp}{\alpha}
\newcommand{\bfa}{{\bf a}}
\newcommand{\bfu}{{\bf u}}
\newcommand{\bfv}{{\bf v}}
\newcommand{\bfB}{{\bf B}}
\newcommand{\bfP}{{\bf P}}
\newcommand{\bbC}{\mathbb C}
\newcommand{\bbN}{\mathbb N}
\newcommand{\bbP}{\mathbb P}
\newcommand{\bbR}{\mathbb R}
\begin{document}

\title{Global well-posedness of the partially damped 2D MHD equations via a direct normal mode method for the anisotropic linear operator }%: Title of the article
\author{Min Jun Jo, Junha Kim, Jihoon Lee} 
\date{\today}

%\thanks{}%
%\subjclass{}%
%\keywords{}%

%\date{\today}%
%\dedicatory{}%
%\commby{}%
% ----------------------------------------------------------------

\maketitle

% ----------------------------------------------------------------

\begin{abstract}
 We prove the global well-posedness of the 2D incompressible non-resistive MHD equations with a velocity damping term near the non-zero constant background magnetic field. To this end, we newly design a normal mode method of effectively leveraging the anisotropy of the linear propagator that encodes both the partially dissipative nature of the non-resistive MHD system and the stabilizing mechanism of the underlying magnetic field.  Isolating new key quantities and estimating them with themselves in an entangling way via the eigenvalue analysis based on Duhamel's formulation, we establish the global well-posedness for any initial data $(v_0,B_0)$ that is sufficiently small in a space rougher than $H^{4}\cap L^1$. This improves the recent work in SIAM J. Math. Anal. 47, 2630–2656 (2015) where the similar result was obtained provided that $(v_0,B_0)$ was small enough in a space strictly embedded in $H^{20}\cap W^{6,1}$.
\end{abstract}

%\tableofcontents

\section{Introduction}

Plasma, the fourth state of matter after solid, liquid, and gas, accounts for the state of most visible matter in space - investigating the dynamics of plasma is crucial for our understanding of physics. The most famous example of the plasma entities is the Sun in our solar system. Coronal mass ejection from the Sun, which is known to be huge release of plasma, can trigger magnetic storms that would damage the communication satellites. There was an incident that a coronal mass ejection actually landed on the earth and stretched out the auroral zone, see \cite{Piel}. In astrophysics, plasma dynamics is an important subject of research.

The motion of plasmas can be effectively modeled \cite{Alf,Cab,Landau} by the incompressible MHD (magnetohydrodynamics) equations  
\begin{equation}  \label{mhd_origin}
\left\{
\begin{aligned} 
&\rd_t \bfv + \kappa (-\Delta)^{\alpha} \bfv + (\bfv\cdot \nb) \bfv - (\bfB \cdot \nabla)\bfB - \nabla p = 0, \\
&\rd_t\bfB + \mu (-\Delta)^{\alpha}\bfB + (\bfv \cdot \nabla)\bfB - (\bfB \cdot \nabla) \bfv = 0, \\
&\nabla \cdot \bfv = \nabla\cdot \bfB = 0,  
\end{aligned}
\right.
\end{equation}
in $\bbR^2$ with the divergence-free initial data $(\bfv_0,\bfB_0)$ for $\alp\geq 0.$ Here $\bfv$, $\bfB$, and $p$ denote the velocity vector field, the magnetic vector field, and the scalar pressure, respectively. The numbers $\kappa\geq 0$ and $\mu\geq 0$ are the diffusion coefficients, called the fluid viscosity and the ohmic resistivity. 

\subsection{Partial dissipation and inviscid damping for the MHD system}

It has been conjectured that energy of the system \eqref{mhd_origin} for $\alp=1$, equipped with a nontrivial background magnetic field, would be dissipated at a rate independent of the resistivity $\mu\geq 0$; in \cite{CC}, the conjecture was numerically backed up for the 2D case. This suggests that the stabilizing mechanism, which stems from the presence of the background magnetic field, be strong enough to allow us to ignore the effect of resistivity in view of energy dissipation. To see the heart of matter, we investigate the extreme case $\mu=0$ of \eqref{mhd_origin} which is the following non-resisitve MHD system
\begin{equation}  \label{mhd_origin2}
\left\{
\begin{aligned} 
&\rd_t \bfv + \kappa (-\Delta)^{\alpha} \bfv + (\bfv\cdot \nb) \bfv - (\bfB \cdot \nabla)\bfB - \nabla p = 0, \\
&\rd_t\bfB + (\bfv \cdot \nabla)\bfB - (\bfB \cdot \nabla) \bfv = 0, \\
&\nabla \cdot \bfv = \nabla\cdot \bfB = 0. 
\end{aligned}
\right.
\end{equation}
The above system is \emph{partially dissipative,} meaning that only certain types of motion are damped within the system. Here only the fluid velocity is damped by the diffusion term $(-\Delta)^{\alp}\mathbf{v}$. In contrast, when $\kappa>0$ and $\mu>0$, both the fluid velocity and the magnetic field are damped and so the original MHD system \eqref{mhd_origin} is \emph{fully dissipative.} While it is well-known that the system \eqref{mhd_origin} is globally well-posed \cite{ST} as long as we ensure its fully dissipative nature by setting up $\kappa>0$ and $\mu>0$, it remains open whether the classical solutions to the non-resistive (and so partially dissipative) system \eqref{mhd_origin2} develop finite time singularities or not. 

It turned out the difficulty due to the absence of the damping for $\mathbf{B}$ in \eqref{mhd_origin2} can be overcome by exploiting the stabilizing effect of the underlying constant magnetic field. Specifically, Lin, Xu, and Zhang (2015) in \cite{LZ2} proved the small global well-posedness of such system \eqref{mhd_origin2} with $\alp=1$ near the stationary state $(\bfv,\bfB)=(0,e_1)$. For the 3D case, see \cite{LZ}. After these breakthroughs, there have been many results regarding the effect of the underlying field $(0,e_1)$. One may refer to \cite{AZ,CL,CDW,CaoWu,DZ,JWY,Ren,Ting,Ting2} for the various related results. 

The stabilizing mechanism of the constant background magnetic field on $\mathbf{B}$ can be compared with \emph{inviscid damping} for the Euler equations near the Couette (or shear) flow in the sense that $\mathbf{B}$ equations themselves in \eqref{mhd_damp} are diffusion-less but the perturbation around $(0,e_1)$ mixes the phase as a whole, yielding the exponential time decay on the linearized level. See in \eqref{mhd_eq} how the extra linear structure of $\partial_1 B$ and $\partial_1 v$ is obtained in a system-entangling way by adopting the perturbative regime. One may also see \cite{CL} for the ideal MHD case.

  %For $\alp=1$, the solutions of \eqref{mhd_origin} were shown, especially in \cite{LZ}, \cite{LZ2}, \cite{Wu}, and \cite{Ren}, to display their dispersive and dissipative nature when having a nonvanishing background magnetic field. This was striking because it happened even though there was no magnetic resistivity in \eqref{mhd_origin}. The shared key step was to bring certain wave-type based linear structure out of the equations.

 In this paper, we focus on the particular model with a damping velocity
\begin{equation}  \label{mhd_damp}
\left\{
\begin{aligned} 
&\rd_t \bfv + \bfv + (\bfv\cdot \nb) \bfv - (\bfB \cdot \nabla)\bfB - \nabla p = 0 \\
&\rd_t\bfB + (\bfv \cdot \nabla)\bfB - (\bfB \cdot \nabla) \bfv = 0 \\
&\nabla \cdot \bfv = \nabla\cdot \bfB = 0,  
\end{aligned}
\right.
\end{equation}
which is an end-case $(\alp,\mu)=(0,0)$  of the original MHD system \eqref{mhd_origin2}. One notices that the $L^2$ energy estimate for \eqref{mhd_damp} does not give the control over $\nb \bfv$ anymore unlike the $\alp=1$ case of \eqref{mhd_origin}. Despite such obstruction, near the stationary solution  $(\bfv,\bfB)=(0,e_1)$, not only the global small existence of the solutions to \eqref{mhd_damp} but also the temporal decay of such solutions were obtained in \cite{Wu} for the relatively higher-order Sobolev spaces. The goal of this paper is to improve the result of \cite{Wu} by establishing both the small global wellposedness and the time decay of the solutions in the lower-order Sobolev spaces. 

%%For the case of $\alpha = 1$, see the references and therein. Notably, Ting Zhang \cite{Ting} proved global well-posedness by estimating
%%\begin{equation*}
%%\int_0^\infty \bigg| \int_{\bbR^2} |\wwhat{v}(\xi)| \,\mathrm{d}\xi \bigg|^2 \mathrm{d}t, \qquad \int_0^\infty \bigg| \int_{\bbR^2} |\wwhat{B}_2(\xi)| \,\mathrm{d}\xi \bigg|^2 \mathrm{d}t.
%%\end{equation*}

%%\begin{conjecture} [Perturbative magnetic relaxation conjecture]
 %%Let $(\mathring{v},\mathring{B})$ be a member of the family of certain stationary solutions to \eqref{mhd_origin}. If the initial data $(v_0,b_0)$ is close enough to $(\mathring{v},\mathring{B})$, then the corresponding solutions $(v,b)$ converge to $(\mathring{v},\mathring{B})$ at least in $L^2$ as $t\to \infty.$
%%\end{conjecture}
%%One might wonder if this perturbative conjecture deviates a bit from the original conjecture because the original one specifically requires the velocity to decay completely to zero. But this is just because the original conjecture was made for bounded domain in \cite{Moff} and \cite{Ar} while we are working on the whole space in this article. That is the reason why the target stationary point should be a member of `certain family,' which possibly depends on the topology of the domain. 

\subsection{Magnetic relaxation conjecture}

To put such energy dissipation of the partially dissipative MHD system into perspective, we introduce another phenomenon called \emph{magnetic relaxation}, which was originally conjectured for \eqref{mhd_origin2} by Arnol'd (1974) in \cite{Ar} and then carefully discussed by Moffatt (1985) in \cite{Moff}. The magnetic relaxation conjecture basically tells that the magnetic field $\bfB$ would asymptotically converge to some stationary Euler flow while the fluid particles will eventually stop due to the kinetic dissipation, and that during such convergence the core topology of $\bfB$ would be preserved as the one of the initial field $\bfB_0$.

As justification of our target model \eqref{mhd_damp} in view of the relaxation conjecture, we emphasize that even though originally the case $\alpha=1$ was the main target of \cite{Moff}, in the same paper, Moffatt still expected that various type of dissipation, including the velocity damping case $\alpha=0$, would do equally well in terms of the relaxation. As brifely noted in the previous section, the case $\alp=0$ does not allow us to have control over the velocity gradient anymore unlike the full Laplacian case $\alp=1$; so the extreme case $\alp=0$ might be more suitable to study magnetic relaxation, preventing any heavy reliance on the presence of the diffusion term. Such a view is well-aligned with our consideration of the specific model \eqref{mhd_damp}.

The heuristic argument suggested in \cite{Moff} for the relaxation problem simply follows from the fact that sufficiently regular solutions to \eqref{mhd_origin} with $\mu=0$ should satisfy the energy equality
 \begin{equation*}
     \frac{1}{2}\frac{\ud}{\ud t}(\|\bfv\|_{L^2}^2+\|\bfB\|_{L^2}^2)+\kappa\|(-\Delta)^{\frac{\alpha}{2}} \bfv\|_{L^2}^2 = 0
 \end{equation*}
 which means that the fluid viscosity dissipates the total energy of the solutions. But physically the zero resistivity, i.e. the perfect conductivity, allows the magnetic field to preserve the nontrivial topological structure of the initial data $\bfB_0$ over time, and thus, according to Moffatt in \cite{Moff}, the magnetic field energy should enjoy certain lower bound due to such inherent configuration of the magnetic field while the velocity $\bfv$ goes to $0.$ The magnetic field $\bfB$ will be finally frozen once the fluid particles stop, and that moment will be the time that $\bfB$ attains its minimum energy because if $\bfB$ is still unfrozen then Lorentz force make the fluid particles move again and so the corresponding kinetic energy stemming from the movement will be dissipated by the viscosity anyway until the fluid particles are ultimately immobilized. We can formalize such behavior as the following.
 
 \begin{conjecture}\label{conj}
 Any sufficiently regular solution $(\bfv,\bfB)$ to \eqref{mhd_origin2} exhibits the magnetic relaxation, i.e., $\bfB$ converges to some stationary Euler flow $\mathring{\bfB}$ as $t\to\infty$ while $|\bfv|_{L^2}$ decays to $0$ as $t\to\infty.$
 \end{conjecture}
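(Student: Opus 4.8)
The plan is to separate the conjecture into a perturbative regime, where a full proof is within reach by the methods of this paper, and the general regime, which I expect to remain the genuine obstacle. In the perturbative regime one writes $\bfB = \bfe_1 + \bfb$ with $\bfv$ a small perturbation and linearizes \eqref{mhd_origin2} around $(0,\bfe_1)$: the magnetic equation then contributes the extra linear term $\rd_1 \bfv$ and the velocity equation the extra term $\rd_1 \bfb$, so the linearized propagator simultaneously carries the partial velocity dissipation $\kappa(-\lap)^\alpha$ and the dispersive phase mixing forced by the background field. First I would run the normal-mode / Duhamel analysis of this anisotropic operator --- exactly the scheme developed below for \eqref{mhd_damp} (essentially the case $\alpha=0$) --- to obtain global existence of small solutions together with quantitative time decay of $\nrm{\bfv(t)}_{L^2}$ and of the low-frequency part of $\bfb$. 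This already gives $\bfv\to 0$ in $L^2$ and $\bfB(t)=\bfe_1+\bfb(t)\to\bfe_1$ in the topology controlled by the energy method, so the conjecture holds in this regime with $\mathring{\bfB}=\bfe_1$, a (trivial) magnetostatic equilibrium, i.e.\ a stationary Euler flow.

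For the general conjecture the natural route is an $\omega$-limit / variational argument in the spirit of Arnol'd and Moffatt. The ingredients would be: (i) an \emph{a priori} bound making $t\mapsto(\bfv(t),\bfB(t))$ precompact in a suitable space; (ii) the energy equality, which forces $\int_0^\infty \kappa\nrm{(-\lap)^{\alpha/2}\bfv}_{L^2}^2\,\ud t<\infty$ and hence a sequence $t_n\to\infty$ along which $\bfv(t_n)\to 0$; (iii) conservation of the topological Casimirs of $\bfB$ (the distribution of signed flux areas in 2D, the magnetic helicity in 3D), which follows because $\bfB$ is transported by $\bfv$ and which yields a strictly positive lower bound on $\nrm{\bfB(t)}_{L^2}$ that rigidifies the limit; and (iv) passage to the limit in the $\bfB$-equation to show that any limit point $\mathring{\bfB}$ satisfies the magnetostatic equation, hence is a stationary Euler flow.

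The hard part --- and the reason the conjecture is open --- is (i) together with making (iv) rigorous. Since no dissipation acts on $\bfB$ in \eqref{mhd_origin2}, the field $\bfB(t)$ gains no regularity; its high frequencies are merely rearranged by the (only weakly convergent) velocity field, so strong compactness of $\bfB(t_n)$ in $H^s$ cannot be expected and the quadratic term $(\bfB\cdot\nb)\bfB$ need not pass to the limit; moreover $\nrm{\bfv(t)}_{L^2}\to 0$ (rather than merely $\liminf=0$) is unclear without extra structure, and global regularity of \eqref{mhd_origin2} is itself open. A realistic first step beyond the perturbative case is therefore the weakened statement that $\bfv(t)\weakto 0$ and $\bfB(t)$ converges weakly along subsequences to a weak solution of the stationary Euler equations, with genuine preservation of the fine topology (which is known to fail for some related relaxation dynamics) left as the open problem; I would accordingly present the perturbative result as a theorem and the above as the proposed line of attack on Conjecture~\ref{conj}.
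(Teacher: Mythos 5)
The statement you were asked to prove is labeled \emph{Conjecture}~\ref{conj} in the paper, and the paper itself offers no proof of it: it is explicitly stated as an open problem, motivated by Arnol'd and Moffatt, and the paper's actual contribution is the perturbative stability result near $(\bfv,\bfB)=(0,e_1)$ for the damping case $\alpha=0$, namely Theorem~\ref{thm1} together with the decay in Remark~\ref{rmk_decay}. You correctly read the situation: you do not claim a proof of the conjecture; you isolate a perturbative regime where a rigorous result is available and then outline (and honestly critique) an $\omega$-limit program for the general case. That is exactly the paper's own stance --- the authors prove small-data global well-posedness with decay, interpret it as an instance of magnetic relaxation/resistivity-independent dissipation in the perturbative regime, and leave the full conjecture open, citing in particular that global regularity for \eqref{mhd_origin2} is itself unresolved.

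On the perturbative part, your description matches the paper's strategy: pass to $(v,B)$ with $\bfB=e_1+B$, diagonalize the Fourier-side $4\times 4$ linear operator, use the Duhamel representation with the anisotropic eigenvalue behavior to get decay, and close with an energy/continuity argument. One small overreach to guard against: you describe this as working for the general $\kappa(-\Delta)^\alpha$ dissipation, but the paper proves the result only for $\alpha=0$ and merely asserts ``methodological robustness'' for $0\le\alpha\le 1$; if you present the perturbative claim as a theorem, either restrict to $\alpha=0$ (and cite the decay rates in \eqref{decay_rmk}) or carry out the analogue of Lemma~\ref{lem_note} for the modified eigenvalues. On the general part, the obstacles you name --- no compactness for $\bfB$ in strong topologies because no dissipation acts on it, uncertainty whether $\|\bfv\|_{L^2}\to 0$ rather than along a subsequence, difficulty passing to the limit in $(\bfB\cdot\nb)\bfB$, and the unresolved global regularity --- are precisely why the conjecture remains open, and the weak-limit/topology-preservation caveats you add are warranted. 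In short: there is no proof in the paper to compare against because there is no proof; your proposal is a fair and accurate account of what is known (the perturbative regime) and what is missing (everything else), and the one thing to tighten is the claimed scope of $\alpha$ in the perturbative theorem.
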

 
 The difficulty of proving the conjecture arises in the fact that we do not even know whether the global existence result can be shown for the system \eqref{mhd_origin2} in both two and three dimensions. For the recent local well-posedness results with $\alp=1$, see \cite{CF}, \cite{Feff1}, and \cite{Feff2}.
 
 However, once we restrict ourselves to the vicinity of the simplest stationary solution $(\bfv,\bfB)=(0,e_1)$, we can establish not only the global existence for our main model \eqref{mhd_damp} but also the temporal decay of the corresponding solutions. See Theorem~\ref{thm1} and \eqref{rmk_decay}. Both results heavily depend on the following perturbation method. Looking at the unknowns as small perturbation around $(0,e_1),$ we write
 \begin{equation*}
     \bfv=0+v, \quad \bfB=e_1+B,
 \end{equation*}
 which transforms the original system \eqref{mhd_damp}  into our main target system
\begin{equation}  \label{mhd_eq}
\left\{
\begin{aligned} 
&\rd_tv + v + (v\cdot \nb) v - (B \cdot \nabla)B - \nabla p = \partial_1 B, \\
&\rd_tB + (v \cdot \nabla)B - (B \cdot \nabla) v = \partial_1v, \\
&\nabla \cdot v = \nabla \cdot B = 0. 
\end{aligned}
\right.
\end{equation} 
Note that the above equations have the additional \emph{linear} terms $\partial_1 B$ and $\partial_1 v$. Such extra linear structure of the equations allows us to prove the small global existence in our main theorem, and we can get even the temporal decay of the solutions \eqref{decay_rmk}. In other words, we witness that the solutions as small deviation from the equilibrium $(0,e_1)$ converge to the equilibrium asymptotically. This appears to be the magnetic relaxation phenomena of \cite{Ar,Moff} and simultaneously it is also the resistivity-independent energy dissipation conjectured in \cite{CC}. Then we reach the following question: Can one actually view such resistivity-independent energy dissipation of the solutions to \eqref{mhd_origin2} (including \eqref{mhd_damp}) as a part of the magnetic relaxation conjecture? Equivalently, for any background magnetic field that is a stationary solution to the system \eqref{mhd_origin} with zero velocity field, will the corresponding solutions to \eqref{mhd_origin2} be always dissipated and converge to the specified background field? The answer is not known, so far only the nonzero constant magnetic field was considered.

Very recently, Beekie, Friedlander, and Vicol in \cite{MRE} proved that the steady state $(0,e_1)$ is asymptotically stable in the so-called magnetic relaxation equations (MRE) with respect to the norm of $H^{m}$ for $m\geq 14$. The MRE was introduced by Moffatt in \cite{Moff,Moff2} to guarantee the energy dissipation in view of the relaxation phenomena of the non-resistive MHD equations. Note that our velocity damping case \eqref{mhd_damp} corresponds to the MRE with $\gamma=0$, which was the main target for the stability analysis in \cite{MRE}. See Chapter 5 in \cite{MRE}. This  corroborates our perspective on Conjecture~\ref{conj} as the generalized statement for the stabilizing mechanism induced by the presence of certain underlying nontrivial magnetic field. As a final remark, such a perturbative regime is consistent with the geometric view of 
\cite{CS} by Choffrut and Šverák where the richness of the \emph{nearby} steady states for the 2D Euler equations is shown. See also \cite{DE}.

\subsection{Previous work}

To the best of the authors' knowledge, currently \cite{Wu} is the only available result regarding the velocity damping case \eqref{mhd_damp}. In their paper \cite{Wu}, Wu, Wu, and Xu (2015) attained the small global unique existence for \eqref{mhd_eq} even with the temporal decay properties. To discuss their result more precisely, writing $B=\nb^{\perp}\psi$, one may define the norm for the class $Y_0$ for initial data by
\begin{equation*}
     \|(v_0,\psi_0)\|_{Y_0}:=\|\langle\nabla\rangle^{N} (v_0,\psi_0)\|_{L^2}+\|\langle\nabla\rangle^{6+}(v_0,\psi_0)\|_{L^1}+\|\langle\nabla\rangle^{6+}(v_1,\psi_1)\|_{L^1}
\end{equation*}
where $N\geq 20$ is a large number, $v_1$ and $\psi_1$ are defined in an involved way: they both contain several nonlinear terms that concerned some wave-type linear operators. Now we state the existence part of \cite{Wu}. For simplicity, we omit the statement for the properties of the \emph{low-order} derivatives. The below is a slightly rougher version of their original theorem.
\begin{theorem}[J. Wu, Y. Wu, X. Xu]
Let $N\geq 20$ and $\varepsilon<0.01$. There exists a sufficiently small $\delta>0$ such that the following holds. Suppose $\|(v_0,\psi_0)\|_{Y_0} \leq \delta$. Then there exists a unique global solution pair $(v,\psi)$ to \eqref{mhd_eq} with $B=\nabla^{\perp}\psi$ such that
        \begin{equation*}
            \sup_{t\geq1}t^{-\varepsilon}\|\langle\nabla\rangle^N (v,\nabla\psi)\|_{L^2} <\infty.
        \end{equation*}     
\end{theorem}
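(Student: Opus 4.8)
\emph{Proof proposal.} The natural route is a \emph{linear decay plus nonlinear energy} bootstrap, organized around the second-order reformulation of \eqref{mhd_eq} that exposes the anisotropic smoothing hidden in the coupling terms $\rd_1 B$ and $\rd_1 v$. First I would apply the Leray projection $\bbP$ to the $v$-equation (which removes the pressure and leaves $\rd_1 B$ unchanged, since $B$ is divergence-free), write $B=\nb^\perp\psi$, and differentiate the $v$-equation in $t$ using $\rd_t B=\rd_1 v+(\text{quadratic})$, producing the damped Klein--Gordon-type system
\[
\rd_t^2 v+\rd_t v-\rd_1^2 v=\bbP\,\calN(v,\psi),\qquad
\rd_t^2\psi+\rd_t\psi-\rd_1^2\psi=\tld{\calN}(v,\psi),
\]
whose principal part $\rd_t^2+\rd_t-\rd_1^2$ is smoothing \emph{only in the $x_1$-direction}. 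This is exactly where the auxiliary data $v_1,\psi_1$ in the definition of $\|\cdot\|_{Y_0}$ enter: they are the initial velocities $\rd_t v,\rd_t\psi$ for the second-order system, equal by the equations to $\rd_1 B_0-v_0$, $\rd_1 v_0$ plus quadratic terms in $(v_0,\psi_0)$, so their $Y_0$-smallness is a consequence of that of $(v_0,\psi_0)$ up to a harmless quadratic correction.

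Second, I would run the linear analysis of the propagator $e^{t\calL}$ by normal-mode (Fourier) diagonalization. The symbol satisfies $\lambda^2+\lambda+\xi_1^2=0$, hence $\lambda_\pm(\xi)=\tfrac12\bigl(-1\pm\sqrt{1-4\xi_1^2}\,\bigr)$: for $|\xi_1|\gtrsim1$ one has $\Re\lambda_\pm=-\tfrac12$ (genuine exponential decay with wave-type oscillation), while for $|\xi_1|\ll1$ the slow branch behaves like $\lambda\sim-\xi_1^2$, i.e.\ heat flow purely along $x_1$; there is \emph{no} decay in the $\xi_2$ variable, which is the structural reason the scheme must purchase decay with $L^1$-integrability. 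The clean estimates one needs are bounds of the form
\[
\bigl\|\,\langle\nb\rangle^{k}e^{t\calL}U_0\bigr\|_{L^2_x}\ \lesssim\ (1+t)^{-\frac14}\,\|U_0\|_{H^{k}\cap L^1},
\]
with an extra $(1+t)^{-1/2}$ for every $\rd_1$ one is willing to put on $U_0$, and with the top-order norm $k=N$ merely bounded (or slowly growing) rather than decaying. Duhamel's formula then reads $U(t)=e^{t\calL}U_0+\int_0^t e^{(t-s)\calL}\calN(s)\,ds$ up to the $v_1$-type correction, and the entire difficulty is to propagate these rates through the nonlinear integral.

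Third, in parallel I would carry out the \emph{energy estimates}, which do not see the decay but prevent derivative loss. Applying $\langle\nb\rangle^{N}$ to \eqref{mhd_eq}, pairing with $\langle\nb\rangle^{N}(v,B)$, and using $\nb\cdot v=0$ to cancel the transport terms yields $\tfrac{d}{dt}\calE_N+\|v\|_{H^N}^2\lesssim\|\nb v\|_{L^\infty}\calE_N+(\text{higher})$; crucially, the coupling terms also produce a \emph{hidden dissipation of $\rd_1 B$}: testing the $v$-equation against $\rd_1 B$ (equivalently forming the mixed quantity $\int v\cdot\rd_1 B$ and differentiating in time) gives, after integration in $t$ and using the velocity damping together with $\int_0^t\|\rd_1 v\|_{H^{N-1}}^2\,ds$ extracted from the previous estimate,
\[
\int_0^t\|\rd_1 B\|_{H^{N-1}}^2\,ds\ \lesssim\ \calE_N(0)+\sup_{[0,t]}\calE_N+(\text{cubic}).
\]
This $\rd_1 B$-dissipation is the substitute for the missing resistivity and is what lets one close the only genuinely dangerous nonlinear interactions in 2D, namely those in which all $N$ derivatives land on $B$ (so that one factor is $\rd_1 B$ in a dissipated norm and the other is a low-order quantity in $L^\infty$).

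Finally, I would close the argument by continuity. Define
\[
M(t):=\sup_{0\le s\le t}\Bigl(\|(v,\nb\psi)(s)\|_{H^N}+(1+s)^{\frac14-\veps}\|(v,\nb\psi)(s)\|_{L^\infty}+\cdots\Bigr),
\]
the omitted terms being the time-weighted lower-order norms and the $\rd_1$-enhanced dissipation integrals, establish from the linear decay and the energy/dissipation estimates a closed inequality $M(t)\le C\delta+CM(t)^2$ uniform in $t$, and conclude $M(t)\le 2C\delta$ for all $t\ge0$; this gives global existence and uniqueness together with the stated bound $\sup_{t\ge1}t^{-\veps}\|\langle\nb\rangle^N(v,\nb\psi)\|_{L^2}<\infty$. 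I expect the main obstacle to be precisely this last, borderline step: the two-dimensional heat-type rate $(1+t)^{-1/4}$ is \emph{not} time-integrable, so in the Duhamel convolution $\int_0^t(1+t-s)^{-\beta}(1+s)^{-2\gamma}\,ds$ the exponents barely fail to yield a bounded output, and one must absorb the deficit into the $t^{\veps}$ slack. Making this work forces a careful accounting of how many $\rd_1$'s can be extracted from each quadratic term --- using $\nb\cdot v=0$ to trade a $\rd_2$ on one factor for a $\rd_1$ on the other whenever possible --- and it is this loss, together with the number of $L^1$-derivatives needed to feed the anisotropic decay estimate and the $L^\infty$-embedding in 2D, that pins down the thresholds $N\ge20$ and $\langle\nb\rangle^{6+}$ in $L^1$.
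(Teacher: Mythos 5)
The statement you are proving is the cited result of Wu--Wu--Xu; the present paper does not reprove it, but only summarizes its method, so the relevant comparison is twofold.

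Your outline faithfully reproduces the route the paper attributes to \cite{Wu}: differentiate the projected $v$-equation (resp.\ the stream-function equation) in time to obtain the damped, anisotropic wave operator $\rd_t^2+\rd_t-\rd_1^2$, whose symbol roots $\lambda_\pm=\tfrac12(-1\pm\sqrt{1-4\xi_1^2})$ give exponential decay for $|\xi_1|\gtrsim\tfrac12$ and heat-type decay $e^{-\xi_1^2 t}$ on the slow branch for $|\xi_1|\ll1$; purchase decay in the missing $\xi_2$ direction from $L^1$ regularity of the data; combine with $H^N$ energy estimates, extracting hidden $\rd_1 B$-dissipation from the cross term $\int v\cdot\rd_1 B$; and close by continuity while allowing $t^{\veps}$ growth at top order because the $L^1$ slot of $Y_0$ sits at $\langle\nb\rangle^{6+}$, not $\langle\nb\rangle^N$. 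Your identification of $v_1,\psi_1$ as the initial time-derivatives for the second-order system is exactly right, and your worry about the non-integrable $(1+t)^{-1/4}$ rate in the Duhamel convolution is the correct bottleneck. So as a reconstruction of \cite{Wu} the proposal is sound in outline.

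What is worth flagging is that this is precisely \emph{not} the method of the paper you are reading, and the difference is the paper's whole point. Rather than differentiate in time, the paper diagonalizes the first-order $4\times4$ Fourier symbol $J$ of \eqref{lin_system} directly: the same $\lambda_\pm$ appear, but the analysis is carried out with the right eigenvectors $\bfa^j_\pm$ and the dual (left) eigenvectors $\bfb^j_\pm$ obtained from the inverse eigenvector matrix, because $J$ is not normal and the $\bfa^j_\pm$ are not orthogonal. The singular behavior of $\bfb^j_\pm$ near $|\xi_1|=\tfrac12$ is then cancelled by the decomposition identities \eqref{decom_est_1}--\eqref{decom_est_2} and the region estimates of Lemma~\ref{lem_note}. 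This avoids the second-order reformulation altogether, hence avoids the auxiliary data $(v_1,\psi_1)$ and the large numerology your scheme would reproduce: it is exactly this change that lets the paper replace $H^{20}\cap W^{6+,1}$ with $X^m\supset H^m\cap L^1$, $m\geq4$, and moreover get $\sup_t\|(v,B)\|_{H^m}$ \emph{bounded} rather than $O(t^{\veps})$. Your closing remark that the losses ``pin down'' $N\geq20$ should therefore be read as a statement about the second-order approach, not an intrinsic obstruction.
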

\noindent This was the first breakthrough for the case $\alp=0$ for \eqref{mhd_origin} in 2D. We point out that the requirement $N\geq 20$ appears to stem from the method they used, which will be presented shortly. The authors of \cite{Wu} utilized the wave-type linearized equations that were derived by taking the time derivative. This method originated in \cite{LZ} where Lagrangian coordinates were adopted to show anisotropic regularity propagation for the free transport equation that emerged in the stream function formulation of \eqref{mhd_origin2} with $\alp=1$. Rewriting \eqref{mhd_origin2} in the corresponding Lagrangian coordinates, certain damped wave equations of the form $$\partial_{tt}\Phi - \Delta \partial_t\Phi - \partial_1^2\Phi=0$$ were obtained as the linearized system of \eqref{mhd_origin2}. In \cite{Wu}, although Lagrangian formulation was not directly used unlike in \cite{LZ2}, the linear kernel estimates were done based on a similar wave-type system
$$\partial_{tt}\Phi +\partial_t \Phi -\partial_1^2\Phi=0$$
as the linearized system of \eqref{mhd_origin2}.

\subsection{Summary of main result}
We summarize the contributions of our main result as follows.
\begin{itemize}
    \item \textbf{Leveraging anisotropy.} The key difficulty lies  in the inherent anisotropy of the linear propagator. This hampers the direct analysis of the linearized equations of \eqref{mhd_eq} via a normal mode method; the corresponding eigenvectors are \emph{not} orthogonal and so one cannot recover the original representation of a vector from the inner products of the vector with the eigenvectors. By introducing the inverse matrix of the eigenvector matrix, we produce the anisotropic decompositions that are opted for bringing the temporal decay out of the specific linear propagator, see \eqref{decom_est_1} and \eqref{decom_est_2}.

    \item \textbf{Identification of the key quantities.} We perform a specific type of $H^m$ energy estimate that pinpoints the key quantities we need to control, which are $\|\partial_1 v\|_{L^\infty}$ and $\|B_2\|_{L^\infty}$. Those key quantities encode the anisotropic nature of the problem; a careful use of the incompressibility condition, combined with certain calculus inequalities, allows for clarifying the \emph{directional} information more suitably in view of partial dissipation.

    \item \textbf{Reduction of the Sobolev exponents.} In \cite{Wu}, the required Sobolev exponent $N\geq 20$ for the initial data was distant from the optimal Sobolev exponents that were naturally expected for the local well-posendess, cf. \cite{Feff1,Feff2}. See also Proposition \ref{loc_prop} in the next section. More precisely, the initial data was assumed to belong to a space strictly embedded in $H^{20}\cap W^{7,1}$; another notable condition is the $L^1$ assumption on the higher-order derivatives, which generally helps one leverage the linear kernel. In this work, we prove that it is sufficient to require the initial data to be in a space rougher than $H^{4}\cap L^1$.
    
    \item \textbf{Non-turbulent solution.} The highest $H^{20+}$ norm of the solution $(v,B)$ established in \cite{Wu} possibly grows in time with the order $t^{\varepsilon}.$ Such growth in time implies that there could be \emph{turbulence}, which means \emph{energy transfer} from low to high frequencies. Our main theorem guarantees that the highest Sobolev norm $H^m$ of $(v,B)$ does \emph{not} grow in time, preventing energy transfer from low frequencies to high frequencies in the perturbative regime near the background magnetic field.

\item \textbf{Methodological robustness.} Our method can be applied to various types of equations whose linear propagators feature certain symmetry. This is particularly true for the MHD system \eqref{mhd_origin} with any $0\leq\alp\leq1$ in the presence of the underlying constant magnetic field; one can immediately mimic the representation formula \eqref{df_u} and the decompositions \eqref{decom_est_1}-\eqref{decom_est_2}, correspondingly to the $\alp.$ The key is to reproduce the temporal decay estimates that are analogous to \eqref{Omg_1}-\eqref{Omg_3}. This can be done by dividing the domain according to the anisotropy of the given eigenvalues. See the proof of Lemma~\ref{lem_note}.
\end{itemize}

\subsection{Main result}
To state our main results in a more general function space, we start by introducing a spatially anisotropic norm for initial data. 
\begin{definition}
For $m>0$, we say $f=(f_1,f_2,f_3,f_4)\in X^m$ if
\begin{equation*}
    \|f\|_{X^m}:=\|f\|_{H^m}+\lnrm{\frac{\sqrt{|\xi_1|}}{|\xi|}\widehat{f}}_{L_{\xi_1}^2 L_{\xi_2}^1} + \lnrm{\frac{|(\widehat{f}_3,\widehat{f}_4)|}{|\xi|}}_{L^1} + \lnrm{\frac{|(\widehat{f}_3,\widehat{f}_4)|}{\sqrt{|\xi_1|}}}_{L^1} <\infty,
\end{equation*}
where $H^m$ denotes the standard Sobolev space of order $m$.
\end{definition}
\begin{remark}\label{X^m}
    $H^m\cap L^1$ is embedded in $X^m$. See Appendix~\ref{sec_rmk} for the proof.
\end{remark}

 Our target is the global well-posedness for \eqref{mhd_damp} near the nonzero background field $(0,e_1)$, which is equivalent to proving the small global existence and uniqueness for the perturbed system \eqref{mhd_eq}. Using the above definition of $X^m$, we state our main theorem as the following.

\begin{maintheorem}\label{thm1}
	Let $m \in \bbN$ with $m\geq4$ and $(v_0,B_0) \in X^m$ with $\mathrm{div}\,v_0 = \mathrm{div}\,B_0 = 0$. Then there exists a constant $\delta=\delta(m)>0$ such that the following holds. If 
	\begin{equation*}
	   \|(v_0,B_0)\|_{X^m} \leq \delta,
	\end{equation*}
	then there exists a unique global solution pair $(v,B)\in C([0,\infty);H^m(\bbR^2))$ to the system \eqref{mhd_eq}.
\end{maintheorem}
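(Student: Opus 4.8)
\medskip
\noindent\textbf{Proof strategy.}
The plan is a bootstrap/continuity argument built on the local theory of Proposition~\ref{loc_prop}, which produces a unique solution $(v,B)\in C([0,T);H^m)$ whose maximal existence time is controlled from below by $\|(v_0,B_0)\|_{H^m}$ and which can be continued as long as $\|(v,B)(t)\|_{H^m}$ stays bounded (recall $m\geq4$, so $H^m(\bbR^2)$ embeds into $W^{2,\infty}$, leaving ample room in the nonlinear estimates). It therefore suffices to prove an a priori bound: writing $U=(v,B)$ and fixing a decay exponent $a>1$ to be supplied by the linear analysis, I will show that for the scale-invariant quantity
\[
\calM(T):=\sup_{0\le t\le T}\|U(t)\|_{H^m}+\sup_{0\le t\le T}(1+t)^{a}\bigl(\|\rd_1 v(t)\|_{L^\infty}+\|B_2(t)\|_{L^\infty}\bigr),
\]
the hypotheses $\calM(T)\le\veps$ and $\|(v_0,B_0)\|_{X^m}\le\delta$ with $\delta\ll\veps\ll1$ force $\calM(T)\le\veps/2$. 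A standard open--closed argument then gives $\calM(t)\le\veps$ for all $t\ge0$, hence global existence in $C([0,\infty);H^m)$, and uniqueness is inherited from Proposition~\ref{loc_prop}.

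\medskip
\noindent\textbf{Linear propagator and temporal decay.}
Leray-projecting the $v$-equation (and noting that $\rd_1v$, $\rd_1 B$ remain divergence free), \eqref{mhd_eq} reads $\rd_t U+LU=\calN(U)$, where the linear part is, componentwise in each pair $(\wht v_j,\wht B_j)$, the ODE system with generator $\left(\begin{smallmatrix}-1&i\xi_1\\ i\xi_1&0\end{smallmatrix}\right)$, so the propagator $e^{-tL}$ has Fourier symbol with eigenvalues $e^{t\lmb_\pm(\xi_1)}$, $\lmb_\pm(\xi_1)=\tfrac12\bigl(-1\pm\sqrt{1-4\xi_1^2}\,\bigr)$, and $\calN$ collects the (projected) transport and Lorentz terms. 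The eigenvalues depend only on $\xi_1$: for $|\xi_1|\lesssim1$ there is a slowly dissipated mode $\lmb_+\sim-\xi_1^2$ and a strongly damped mode $\lmb_-\sim-1$, while for $|\xi_1|\gtrsim1$ one has $\Re\lmb_\pm=-\tfrac12$. Since this symbol is not normal its eigenvectors are non-orthogonal, so I will diagonalize through the inverse eigenvector matrix (singular only at the defective frequencies $|\xi_1|=\tfrac12$, where $\Re\lmb_\pm=-\tfrac12$ already supplies exponential decay and the representation is patched), producing the formula \eqref{df_u} and the anisotropic decompositions \eqref{decom_est_1}--\eqref{decom_est_2}. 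The crucial point is that $\rd_1 v$ and $B_2$ each carry an \emph{extra} factor of $\xi_1$: $\rd_1 v$ explicitly, and in addition $\wht v$ is $O(\xi_1)$ relative to $\wht B$ in the slow mode, while $\nb\cdot B=0$ forces $\wht{B_2}=-\tfrac{\xi_1}{\xi_2}\wht{B_1}$ (morally $B_2=-\rd_1\psi$ with $B=\nb^\perp\psi$). Feeding these $\xi_1$-gains into the one-directional heat decay $e^{t\lmb_+}$, and using the precisely tuned weights of the $X^m$-norm (the mixed $L^2_{\xi_1}L^1_{\xi_2}$ piece for the slow direction and the $|\xi_1|^{-1/2}$, $|\xi|^{-1}$ weights on the $B$-part), one upgrades the naive, non-integrable rate $t^{-1/2}$ to an integrable $(1+t)^{-a}$ with $a>1$. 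This is exactly Lemma~\ref{lem_note}: split Fourier space according to the anisotropy of $\lmb_\pm$ ($|\xi_1|$ small versus large, then $|\xi_1|$ against $|\xi_2|$ and against $1/t$) and estimate region by region, the sector $|\xi_1|\gtrsim1$ yielding exponential-in-time decay and the low-$\xi_1$ sector the polynomial rate, cf.\ \eqref{Omg_1}--\eqref{Omg_3}.

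\medskip
\noindent\textbf{Energy estimate and the role of $\rd_1 v$, $B_2$.}
For the $H^m$ bound I will apply $\rd^\alpha$, $|\alpha|=m$, to \eqref{mhd_eq}, pair with $(\rd^\alpha v,\rd^\alpha B)$, and integrate. The linear terms $\rd_1 B$ and $\rd_1 v$ cancel after one integration by parts, the velocity damping yields $-\|\rd^\alpha v\|_{L^2}^2$, and the divergence-free transport structure means derivatives are lost only through commutators. The task is to absorb every resulting cubic term either into the dissipation $\|v\|_{H^m}^2$ (this handles the $v$-quadratic contributions, via smallness of $\|v\|_{H^m}$) or into a term of the form $\bigl(\|\rd_1 v\|_{L^\infty}+\|B_2\|_{L^\infty}\bigr)\|U\|_{H^m}^2$. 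To obtain the latter I will exploit the incompressibility relations $\rd_1 v_1=-\rd_2 v_2$, $\rd_1 B_1=-\rd_2 B_2$ (and, where convenient, the stream formulation $\rd_t\psi+v\cdot\nb\psi=-v_2$, $B=\nb^\perp\psi$), together with Gagliardo--Nirenberg and commutator estimates, to reroute derivatives so that whatever factor is placed in $L^\infty$ is always a $\rd_1$-derivative of $v$ or of $\psi$, i.e.\ precisely $\rd_1 v$ or $B_2$. The outcome is
\[
\tfrac{\ud}{\ud t}\|U\|_{H^m}^2+\|v\|_{H^m}^2\lesssim \bigl(\|\rd_1 v\|_{L^\infty}+\|B_2\|_{L^\infty}\bigr)\|U\|_{H^m}^2,
\]
so Grönwall together with $\int_0^\infty(1+t)^{-a}\,\ud t<\infty$ controls $\sup_t\|U(t)\|_{H^m}$ by $\|(v_0,B_0)\|_{X^m}$ times a constant depending only on $\calM$.

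\medskip
\noindent\textbf{Closing the bootstrap; the hard part.}
To bound the second half of $\calM$, I insert the Duhamel formula $U(t)=e^{-tL}U_0+\int_0^t e^{-(t-s)L}\calN(U(s))\,\ud s$ into the linear decay estimate of Lemma~\ref{lem_note}: the free term contributes $\lesssim(1+t)^{-a}\|U_0\|_{X^m}$, and since $\calN$ is quadratic in $U$ the Duhamel term, after carrying out the time convolution against the decay kernel, yields a self-referential bound of schematic form $(1+t)^{a}\bigl(\|\rd_1 v\|_{L^\infty}+\|B_2\|_{L^\infty}\bigr)(t)\lesssim\|U_0\|_{X^m}+\calM(t)^2$. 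Combined with the energy bound this closes the bootstrap for $\delta,\veps$ small, completing the proof. I expect the genuine obstacle to be this linear step: because $L$ is non-normal the normal-mode method is a priori unavailable (non-orthogonal eigenvectors, defective frequencies), and because the dissipation acts in a single Fourier direction the obvious decay is too slow to close either the Grönwall or the Duhamel iteration. Overcoming both at once --- via the inverse-eigenvector decompositions \eqref{decom_est_1}--\eqref{decom_est_2}, the structural $\xi_1$-gains of $\rd_1 v$ and $B_2$, and the anisotropic weights built into $X^m$ --- and then verifying that the three coupled (``entangled'') estimates for $\|\rd_1 v\|_{L^\infty}$, $\|B_2\|_{L^\infty}$, and $\|U\|_{H^m}$ are jointly consistent, is where the real work lies.
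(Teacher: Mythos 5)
Your overall architecture matches the paper's --- local theory, eigenvector/inverse-eigenvector decomposition of the anisotropic propagator, an energy estimate isolating $\|\rd_1 v\|_{L^\infty}$ and $\|B_2\|_{L^\infty}$, and a Duhamel-plus-continuity closure --- but the energy inequality you posit and the bootstrap quantity built on it have a gap that would prevent the argument from closing. Your
\[
\tfrac{\ud}{\ud t}\|U\|_{H^m}^2+\|v\|_{H^m}^2\lesssim \bigl(\|\rd_1 v\|_{L^\infty}+\|B_2\|_{L^\infty}\bigr)\|U\|_{H^m}^2
\]
needs $\|B_2(t)\|_{L^\infty}\in L^1_t(0,\infty)$ (equivalently $a>1$ in your weight $\calM$) to run Gr\"onwall, but the linearized propagator only gives $\|B_2(t)\|_{L^\infty}\lesssim t^{-1}$: $\operatorname{div}B_0=0$ forces $\widehat{B}_2(0,\xi)\sim\xi_1$ near $\xi_1=0$, so the slow mode contributes $\int|\xi_1|e^{-\xi_1^2 t}\,\ud\xi_1\sim t^{-1}$, which lies in $L^2_t$ but not in $L^1_t$. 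The paper resolves this with two intertwined devices you omit. It augments the energy with the cross functional $A(t)=\sum_{|\alp|\le m-1}\int\rd^\alp B\cdot\rd_1\rd^\alp v$, whose time derivative (using the $B$-equation) produces the \emph{partial dissipation} $\tfrac12\|\rd_1 B\|_{H^{m-1}}^2$ on the left of \eqref{Em_ineq}. With that dissipation available, the dangerous cubic term is organized as $\|B_2\|_{L^\infty}\,\|\rd_1 B\|_{H^{m-1}}\,\|B\|_{H^m}$ rather than $\|B_2\|_{L^\infty}\|U\|_{H^m}^2$; Young's inequality absorbs the $\|\rd_1 B\|_{H^{m-1}}^2$ piece into the left, and what remains only requires $\|B_2\|_{L^\infty}\in L^2_t$ --- exactly the borderline rate the propagator delivers, and exactly why $G(T)$ in \eqref{G} tracks $\int_0^T\|\rd_1 B\|_{H^{m-1}}^2$ and why Proposition~\ref{prop_B2} bounds $\|\widehat{B}_2\|_{L^2_T L^1}$ rather than $\|\widehat{B}_2\|_{L^1_T L^1}$. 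Without $A(t)$ there is no $\rd_1 B$-dissipation to trade against a power of $\|B\|_{H^m}$, so the $B_2$ contribution is borderline divergent.

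Relatedly, the paper works with time-\emph{integrated} Fourier norms ($L^1_T L^1$, $L^2_T L^1$, $L^{4/3}_T L^1$) and closes a coupled system of several such quantities ($\|\widehat{v}\|_{L^{4/3}_T L^1}$, $\|\widehat{v}_2/\sqrt{|\xi_1|}\|$, $\|\widehat{\nb B}_2\|_{L^{4/3}_T L^1}$, $\|\widehat{\rd_1 B}_2\|_{L^1_T L^1}$; Propositions~\ref{prop_v}--\ref{prop_B2_1}) rather than pointwise weighted decay. This is not merely cosmetic: since the achievable decay for $\|B_2\|_{L^\infty}$ is exactly the Strichartz-type borderline $t^{-1}$, a sup-weighted bootstrap is too rigid, whereas the $L^p_t$ formulation lets one land precisely on the critical exponent. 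If you keep your pointwise bootstrap you should at least split the weights ($a>1$ for $\rd_1 v$, $a=1$ for $B_2$) and, more importantly, replace your energy inequality with one carrying the $A(t)$ correction and the resulting $\|\rd_1 B\|_{H^{m-1}}$ factor --- otherwise the Gr\"onwall step fails.
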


\begin{remark}
    The proof of Theorem \ref{thm1} is based on the three ingredients.
    \begin{enumerate}
        \item \emph{Isolate the key quantities:} We perform a specific kind of $H^m$ energy estimate. By cautiously using the divergence-free condition, $\partial_2$ is replaced by $\partial_1$, which allows us to pinpoint the key quantities that need to be controlled. Those key quantities appear on the right-hand side of \eqref{Em_ineq}; specifically, they are $\|\partial_1 v\|_{L^\infty}$ and $\|B_2\|_{L^\infty}$. The detailed process of the reduction to the key quantities can be found in the proof of Proposition \ref{prop_eng}.
        
        \item \emph{Exploit the anisotropy of the linear kernel:} We use the linear kernel property to estimate the key quantities; applying the Duhamel's formula, we leverage the form of the \emph{eigenvalues} and the eigenvectors to control the key quantities $\|\partial_1 v\|_{L^\infty}$ and $\|B_2\|_{L^\infty}.$ This is the main content of Section 3; See the statements and proofs of both Proposition~ \ref{prop_vderiv} and Proposition \ref{prop_B2}, for examples.
        
        \item \emph{Continuity argument:} We define a target quantity $G$ in \eqref{G}, which captures the growth-in-time of the Sobolev norms of the solution. Assuming the \emph{smallness} of the initial data, the estimates obtained in Section~\ref{sec_thm} guarantees that $G$ stays small for all time. Then the existence time $T_\ast>0$ for the unique solution established in Proposition \ref{loc_prop} can be extended to any arbitrarily large time. This finishes the proof.
    \end{enumerate}
    
\end{remark}

% The second theorem shows that if the initial data is assumed to satisfy further smallness conditions, then the corresponding solution obtained by Theorem \ref{thm1} would have the temporal decay property.

%\begin{maintheorem}\label{thm2}
    % Assume the hypothesis of Theorem \ref{thm1}. Assume further that $(v_0,B_0) \in W^{2,1}(\bbR^2)$. Then there exists a constant $\delta'=\delta'(m)>0$ such that if
%	\begin{equation*}
%	\|(v_0,B_0)\|_{H^m}+\| (v_0,B_0) \|_{W^{2,1}} \leq \delta',
%	\end{equation*}
%	then the corresponding unique global solution pair $(v,B)$ satisfies
%	\begin{equation}\label{tem_decay}
	    %(1+t)^{\frac{3}{4}} \| v(t) \|_{L^2} + (1+t)^{\frac{1}{4}} \| B(t) \|_{L^2} + (1+t)^{\frac{1}{2}} \| B(t) \|_{L^\infty} + (1+t)^{\frac{3}{4}} \| \partial_1B(t) \|_{L^2} \leq 1
%	\end{equation}
%	for all $t \geq 0$.
%\end{maintheorem}

% \begin{remark}
%     The above decay rate is the same with the rate obtained in \cite{Wu}. %The smallness constants $\delta$ in Theorem \ref{thm1} and $\delta'$ in Theorem \ref{thm2} could be different.
% \end{remark}

%\begin{remark}
%    We need $W^{2,1}$ smallness only for the temporal decay part, not for the existence part. This theorem relaxes the additional $L^1$ type smallness condition approximately from $W^{7,1}$, which was used in \cite{Wu}, to $W^{2,1}$ for the temporal decay of the solutions.
%\end{remark}

\begin{remark}\label{rmk_decay}
    (i) Assuming further that $(v_0,B_0)\in H^{5}\cap W^{2,1}$, one may show that
    \begin{equation}\label{decay_rmk}
        (1+t)^{\frac{3}{4}}\| v_1(t) \|_{L^2} + (1+t)^{\frac{5}{4}}\| v_2(t) \|_{L^2} + (1+t)^{\frac{1}{4}}\| B_1(t) \|_{L^2} + (1+t)^{\frac{3}{4}}\| B_2(t) \|_{L^2} \leq 1
    \end{equation}
    for all $t \geq 0$. It remains open whether the conditions, $H^4 \cap L^1$ for global well-posedness and $H^5 \cap W^{2,1}$ for the temporal decay, can be more relaxed or not.  \\
    (ii) The decay rates in \eqref{decay_rmk} are optimal. We briefly show after Proposition~\ref{prop_opt} that there exists a solution to the linearized equations of \eqref{mhd_eq} with the exact decay rates in \eqref{decay_rmk}.
\end{remark}

\subsection{Organization of paper}
The rest of this paper is organized as follows. In Section 2, we introduce and prove the key ingredients; the anisotropic vector decomposition, the corresponding anisotropic time decays, some interpolation inequalities, local well-posedness, and the key energy estimate. In section 3, we prove the existence of small global unique solutions. Appendix contains certain nontrivial technical observations.

\section{Preliminaries}
 We use the notation $A\lesssim B$ when $A\leq CB$ for some constant $C>0$ which is independent of the two real-valued quantities $A$ and $B$. We denote by $\|f\|_{L_T^p X}$ the mixed norm $\int_{0}^{T}\|f(t)\|_{X}^p\,\mathrm{d}t$ for any $p>0$ and for any spatial function class $X$. The Fourier transform of $f$ is expressed as $\widehat{f}$. For any complex number $z$, we use $\bar{z}$ as the notation for the conjugate of $z$.

We need the following interpolation inequality.
\begin{lemma}
	Let $d \in \bbN$, $p>0$, and $q>0$. Then there exists a constant $C>0$ such that
	\begin{align}
		\label{intp_1}\| f \|_{L^1(\bbR^d)} &\leq C \big\| |x|^{\frac d2+p} f(x) \big\|_{L^2(\bbR^d)}^{\frac q{p+q}} \big\| |x|^{\frac d2-q} f(x) \big\|_{L^2(\bbR^d)}^{\frac p{p+q}}
	\end{align}
	for all $f \in \mathscr{S}(\bbR^d)$. As a consequence, for any $g\in \mathscr{S}(\bbR^d)$, if $d=2$, we have
	\begin{equation}\label{intp_2}
	    \|\widehat{g}\|_{L^1(\bbR^2)} \leq C \|\partial_1 g\|_{H^1(\bbR^2)}^{\frac{1}{2}}\|g\|_{H^1(\bbR^2)}^{\frac{1}{2}}
	\end{equation}
	for some $C>0.$  
\end{lemma}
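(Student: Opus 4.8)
The plan is to establish \eqref{intp_1} by a one-parameter Cauchy--Schwarz splitting followed by an optimization, and then to deduce \eqref{intp_2} from the one-dimensional case of \eqref{intp_1} applied successively in the two Fourier variables.

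For \eqref{intp_1}, fix $R>0$ and split $\|f\|_{L^1(\bbR^d)} = \int_{|x|\le R}|f|\,\ud x + \int_{|x|>R}|f|\,\ud x$. On the first piece, insert $1 = |x|^{-(\frac d2-q)}|x|^{\frac d2-q}$ and apply Cauchy--Schwarz; since $q>0$, the weight $\int_{|x|\le R}|x|^{-(d-2q)}\,\ud x$ is finite and, in polar coordinates, equals $c_d R^{2q}/(2q)$. On the second piece, insert $1 = |x|^{-(\frac d2+p)}|x|^{\frac d2+p}$ and apply Cauchy--Schwarz; since $p>0$, the weight $\int_{|x|>R}|x|^{-(d+2p)}\,\ud x$ is finite and equals $c_d R^{-2p}/(2p)$. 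This gives $\|f\|_{L^1} \le C\big(R^{q}A + R^{-p}B\big)$ with $A := \big\||x|^{\frac d2-q}f\big\|_{L^2}$ and $B := \big\||x|^{\frac d2+p}f\big\|_{L^2}$. Minimizing the right-hand side over $R>0$ (the optimal $R$ is proportional to $(B/A)^{1/(p+q)}$; the cases $A=0$ or $B=0$ are trivial) yields $\|f\|_{L^1}\le C' A^{\frac p{p+q}}B^{\frac q{p+q}}$, which is \eqref{intp_1}. The argument uses only the finiteness of $A$ and $B$, so it remains valid for any measurable $f$ with finite right-hand side, not merely $f\in\mathscr S(\bbR^d)$; we use this mild extension below.

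For \eqref{intp_2}, take $f=\widehat g$ and $d=2$. First apply the $d=1$ case of \eqref{intp_1} with $p=q=\tfrac12$ in the variable $\xi_1$, for each fixed $\xi_2$, to get $\int_{\bbR}|\widehat g(\xi_1,\xi_2)|\,\ud\xi_1 \lesssim \big\||\xi_1|\widehat g(\cdot,\xi_2)\big\|_{L^2_{\xi_1}}^{1/2}\big\|\widehat g(\cdot,\xi_2)\big\|_{L^2_{\xi_1}}^{1/2}$; integrating in $\xi_2$ and using Cauchy--Schwarz in $\xi_2$ gives
\[
\|\widehat g\|_{L^1(\bbR^2)} \lesssim \Big(\int_{\bbR}\big\||\xi_1|\widehat g(\cdot,\xi_2)\big\|_{L^2_{\xi_1}}\,\ud\xi_2\Big)^{1/2}\Big(\int_{\bbR}\big\|\widehat g(\cdot,\xi_2)\big\|_{L^2_{\xi_1}}\,\ud\xi_2\Big)^{1/2}.
\]
Each $\xi_2$-integral here is the $L^1_{\xi_2}$-norm of a non-negative, rapidly decaying (but not necessarily smooth) function of $\xi_2$, so the $d=1$ case of \eqref{intp_1} with $p=q=\tfrac12$ applies once more and, by Plancherel, produces $\int_{\bbR}\||\xi_1|\widehat g(\cdot,\xi_2)\|_{L^2_{\xi_1}}\,\ud\xi_2 \lesssim \|\partial_1\partial_2 g\|_{L^2}^{1/2}\|\partial_1 g\|_{L^2}^{1/2}$ and $\int_{\bbR}\|\widehat g(\cdot,\xi_2)\|_{L^2_{\xi_1}}\,\ud\xi_2 \lesssim \|\partial_2 g\|_{L^2}^{1/2}\|g\|_{L^2}^{1/2}$. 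Hence $\|\widehat g\|_{L^1}\lesssim \|\partial_1\partial_2 g\|_{L^2}^{1/4}\|\partial_1 g\|_{L^2}^{1/4}\|\partial_2 g\|_{L^2}^{1/4}\|g\|_{L^2}^{1/4}$, and the elementary bounds $\|\partial_1\partial_2 g\|_{L^2},\|\partial_1 g\|_{L^2}\le\|\partial_1 g\|_{H^1}$ and $\|\partial_2 g\|_{L^2},\|g\|_{L^2}\le\|g\|_{H^1}$ give \eqref{intp_2}.

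I do not expect a serious obstacle here: the only steps requiring a little care are carrying out the scalar optimization in $R$ cleanly and, for \eqref{intp_2}, realizing that one should iterate the \emph{one-dimensional} inequality in $\xi_1$ and $\xi_2$ separately — the two-dimensional instance of \eqref{intp_1} only generates isotropic weights $|\xi|^{\frac d2\pm\cdots}$ and cannot reproduce the anisotropic factor $\|\partial_1 g\|_{H^1}$ on the right-hand side. The one minor technical subtlety, flagged above, is that the intermediate functions of $\xi_2$ to which \eqref{intp_1} is reapplied need not be Schwartz, which is why it is convenient that the proof of \eqref{intp_1} only uses finiteness of its right-hand side.
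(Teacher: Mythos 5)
Your proof is correct and follows essentially the same strategy as the paper: for \eqref{intp_1} split $\bbR^d$ into an inner ball and its complement, apply Cauchy--Schwarz on each piece, and balance the radius (you optimize over $R$ explicitly; the paper instead picks the $r$ equalizing the two $L^1$ pieces, which avoids the calculus but is the same idea); for \eqref{intp_2} both proofs iterate the one-dimensional case of \eqref{intp_1} first in $\xi_1$ and then in $\xi_2$, with a Hölder step in between. Your remark that \eqref{intp_1} only needs finiteness of the right-hand side (not $f\in\mathscr S$) is a useful observation that the paper leaves implicit when reapplying the inequality to the intermediate $\xi_2$-functions, which are not Schwartz.
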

\begin{remark}
	The constant $C=C(d,p,q)$ tends to infinity as $p \to 0$ or $q \to 0$.
\end{remark}
%\begin{remark}
%    By \eqref{intp_2}, we further estimate the mixed norm as
%    \begin{equation}\label{intp_mixed}
%        \|\widehat{g}\|_{L^{k}(0,T;L^1(\bbR^d))} \leq C \|g\|_{L^{k}(0,T;H^m(\bbR^d))}
%    \end{equation}
%    where $k\geq 1$ and $m\geq 3$.
%\end{remark}
\begin{proof}
	Since $f \in L^1(\bbR^d)$, there exists $r>0$ such that $\| f \|_{L^1(B(0;r))} = \| f \|_{L^1(\bbR^d \setminus B(0;r))}$. By H\"{o}lder's inequality we have
	\begin{equation*}
	    \| f \|_{L^1(\bbR^d \setminus B(0;r))} \leq \big\| |x|^{\frac d2+p} f(x) \big\|_{L^2(\bbR^d \setminus B(0;r))} \big\| |x|^{-\frac d2 -p} \big\|_{L^2(\bbR^d \setminus B(0;r))} \leq C(p)r^{-p}\big\| |x|^{\frac d2+p} f(x) \big\|_{L^2(\bbR^d)}.
	\end{equation*}
	Similarly, we can see
	\begin{equation*}
	    \| f \|_{L^1(B(0;r))} \leq \big\| |x|^{\frac d2-q} f(x) \big\|_{L^2(B(0;r))} \big\| |x|^{-\frac d2 + q} \big\|_{L^2(B(0;r))} \leq C(q)r^{q}\big\| |x|^{\frac d2-q} f(x) \big\|_{L^2(\bbR^d)}.
	\end{equation*}
	Combining the above estimates with $\| f \|_{L^1(\bbR^d)} = 2\| f \|_{L^1(\bbR^d \setminus B(0;r))}^{\frac {q}{p+q}} \| f \|_{L^1(B(0;r))}^{\frac {p}{p+q}}$, we obtain \eqref{intp_1}. This proves \eqref{intp_1}.
	
	It remains to show \eqref{intp_2}. We fix $\xi_2 \in \bbR$ and consider $\widehat{g}(\cdot,\xi_2) \in L^1(\bbR)$. By \eqref{intp_1}, we have $$\| \widehat{g}(\cdot,\xi_2) \|_{L^1(\bbR)} \leq C \| \partial_1 \widehat{g}(\cdot,\xi_2) \|_{L^2(\bbR)}^{\frac{1}{2}} \| \widehat{g}(\cdot,\xi_2) \|_{L^2(\bbR)}^{\frac{1}{2}}.$$ Applying H\"{o}lder's inequality to this, we obtain
	\begin{equation*}
	    \| \widehat{g} \|_{L^1(\bbR^2)} \leq C \| \| \partial_1 \widehat{g} \|_{L^2_{\xi_1}(\bbR)} \|_{L^1_{\xi_2}(\bbR)}^{\frac{1}{2}} \| \| \widehat{g} \|_{L^2_{\xi_1}(\bbR)} \|_{L^1_{\xi_2}(\bbR)}^{\frac{1}{2}}.
	\end{equation*}
	With the simple consequence of \eqref{intp_1} 
	\begin{equation*}
	    \| \widehat{f} \|_{L^1(\bbR)} \leq C \| f \|_{H^1(\bbR)},
	\end{equation*} \eqref{intp_2} follows. This finishes the proof.
\end{proof}

Viewing $v$ and $B$ not as two separate quantities but as one unknown $\bfu=(v,B)$, even without the presence of the magnetic field damping term, we still can observe the decay properties of the solutions to \eqref{mhd_eq} as a result of the underlying linear structure. To investigate such linearization effect more succinctly, we drop all the nonlinear terms in \eqref{mhd_eq} to obtain the corresponding linearized system
\begin{equation}\label{lin_system}
	\rd_t \widehat{\mathbf{u}} + J \widehat{\mathbf{u}} = 0, \qquad\qquad J := 
	\begin{pmatrix}
		1 & 0 & i\xi_1 & 0 \\
		0 & 1 & 0 & i\xi_1 \\
		i\xi_1 & 0 & 0 & 0 \\
		0 & i\xi_1 & 0 & 0
	\end{pmatrix}.
\end{equation}
From the simple computation
\begin{equation*}
	\det (\lambda I - J) = (\lambda^2 - \lambda + \xi_1^2)^2,
\end{equation*}
we get the four pairs of an eigenvalue and an eigenvector $\{(\lambda_{\pm}(\xi),\overline{\mathbf{a}_{\pm}^{j}(\xi)}) \}_{j=1,2}$ given by 
\begin{equation*}
	\lambda_{\pm} := \frac {1 \pm \sqrt{1 - 4 \xi_1^2}}2, \qquad \overline{\mathbf{a}_{\pm}^1} := \begin{pmatrix}
	i\xi_1 \\ 0 \\ -\lambda_{\mp} \\ 0
\end{pmatrix},
\qquad \overline{\mathbf{a}_{\pm}^2} := \begin{pmatrix}
	0 \\ i\xi_1 \\ 0 \\ -\lambda_{\mp}
\end{pmatrix}
\end{equation*}
for the linear operator $J$ such that there holds $J \overline{\mathbf{a}_{\pm}^j} = \lambda_{\pm} \overline{\mathbf{a}_{\pm}^j}$ for $j = 1,2$. Note that $\mathbf{a}^1_{+} = \mathbf{a}^1_{-}$ for $\xi_1 = \frac 12$, which makes problems. Since it holds
\begin{equation*}
	\frac 1{\xi_1 \sqrt{1-4\xi_1^2}} \begin{pmatrix}
		-i\lambda_{+} & i\lambda_{-} & 0 & 0 \\
		0 & 0 & -i\lambda_{+} & i\lambda_{-} \\
		\xi_1 & -\xi_1 & 0 & 0 \\
		0 & 0 & \xi_1 & -\xi_1
	\end{pmatrix}	\begin{pmatrix}
		i\xi_1 & 0 & -\lambda_{-} & 0 \\
		i\xi_1 & 0 & -\lambda_{+} & 0 \\
		0 & i\xi_1 & 0 & -\lambda_{-} \\
		0 & i\xi_1 & 0 & -\lambda_{+}
	\end{pmatrix} = I,
\end{equation*}
we have
\begin{equation*}
	\mathbf{u} = \sum_{\substack{\gamma=\pm\\ j=1,2}} \langle \mathbf{u},\mathbf{a}^j_{\gamma} \rangle \mathbf{b}^j_{\gamma}, \qquad \mathbf{b}_{\pm}^1 := \frac 1{\xi_1 \sqrt{1-4\xi_1^2}}\begin{pmatrix}
	\mp i\lambda_{\pm} \\ 0 \\ \pm \xi_1 \\ 0
\end{pmatrix},
\qquad \mathbf{b}_{\pm}^2 := \frac 1{\xi_1 \sqrt{1-4\xi_1^2}}\begin{pmatrix}
	0 \\ \mp i\lambda_{\pm} \\ 0 \\ \pm\xi_1
\end{pmatrix}.
\end{equation*}
%Then, we can rewrite the linearized system as follows
%\begin{equation*}
%	\rd_t \langle \widehat{\mathbf{u}},\mathbf{a}_{\pm}^j \rangle \mathbf{b}_{\pm}^j + \lambda_{\pm} \langle \widehat{\mathbf{u}},\mathbf{a}_{\pm}^j \rangle \mathbf{b}_{\pm}^j = 0, \qquad j = 1,2.
%\end{equation*}
%In terms of the eigenvalues, we have the following temporal decay estimates for the linear solutions.
Now, we recall the system \eqref{mhd_eq} and apply the above projection. Then we can write for $\bold{u} = (v,B)$ that $$\rd_t \langle \widehat{\mathbf{u}},\mathbf{a}_{\pm}^j \rangle \mathbf{b}_{\pm}^j + \lambda_{\pm} \langle \widehat{\mathbf{u}},\mathbf{a}_{\pm}^j \rangle \mathbf{b}_{\pm}^j + \langle \mathbf{P}(\xi) \wwhat{ (v\cdot \nabla)\mathbf{u}},\mathbf{a}_{\pm}^j \rangle \mathbf{b}_{\pm}^j + \langle \mathbf{P}(\xi) \wwhat{ (B\cdot \nabla)\widetilde{\mathbf{u}}},\mathbf{a}_{\pm}^j \rangle \mathbf{b}_{\pm}^j = 0, $$ where
\begin{equation*}
	\widetilde{\mathbf{u}} := (B_1,B_2,v_1,v_2)^\top, \qquad \mathbf{P}(\xi) := 
	\left(
		\begin{array}{c|c}
			P(\xi) & 0\\
			\hline
			0 & P(\xi)
		\end{array}
	\right).
\end{equation*}
Here, we use the notation $\langle \cdot,\cdot \rangle$ as the usual inner product in $\bbC^4$. According to Duhamel's principle, it follows
\begin{equation}\label{df_u}
	\begin{aligned}
		\langle \widehat{\mathbf{u}}(t),\mathbf{a}_{\pm}^j \rangle \mathbf{b}_{\pm}^j &= e^{-\lambda_{\pm}t} \langle \widehat{\mathbf{u}}_0,\mathbf{a}_{\pm}^j \rangle \mathbf{b}_{\pm}^j - \int_0^t e^{-\lambda_{\pm}(t-\tau)}\langle \mathbf{P}(\xi) \wwhat{ (v\cdot \nabla)\mathbf{u}}(\tau),\mathbf{a}_{\pm}^j \rangle \mathbf{b}_{\pm}^j \,\mathrm{d}\tau \\
		&\hphantom{\qquad\qquad}- \int_0^t e^{-\lambda_{\pm}(t-\tau)}\langle \mathbf{P}(\xi) \wwhat{ (B\cdot \nabla)\widetilde{\mathbf{u}}}(\tau),\mathbf{a}_{\pm}^j \rangle \mathbf{b}_{\pm}^j \,\mathrm{d}\tau.
	\end{aligned}
\end{equation}
A significant problem of this formula is the unboundedness of $|\bold{b}_{\pm}^j|$. To overcome it, we employ a simple formulas
\begin{equation}\label{decom_est_1}
		\sum_{\gamma=\pm}e^{-\lambda_{\gamma}t} \langle  \widehat{\mathbf{f}},\mathbf{a}_{\gamma}^2 \rangle \langle \mathbf{b}_{\gamma}^2, e_2 \rangle = (e^{-\lambda_{-}t} - e^{-\lambda_{+}t}) \langle  \widehat{\mathbf{f}},\mathbf{a}_{-}^2 \rangle \langle \mathbf{b}_{-}^2, e_2 \rangle + e^{-\lambda_{+}t} \langle \widehat{\mathbf{f}}, e_2 \rangle
\end{equation} and
\begin{equation}\label{decom_est_2}
		\sum_{\gamma=\pm} e^{-\lambda_{\gamma}t} \langle  \widehat{\mathbf{f}},\mathbf{a}_{\gamma}^2 \rangle \langle \mathbf{b}_{\gamma}^2, e_4 \rangle = (e^{-\lambda_{-}t} - e^{-\lambda_{+}t}) \langle  \widehat{\mathbf{f}},\mathbf{a}_{-}^2 \rangle \langle \mathbf{b}_{-}^2, e_4 \rangle + e^{-\lambda_{+}t} \langle \widehat{\mathbf{f}}, e_4 \rangle,
\end{equation}
where $\widehat{\bold{f}} \in \bbC^4$. The followings are some useful estimates for this approach. By the definitions of $\lambda_{\pm}$, $\bold{a}_{\pm}^j$, and $\bold{b}_{\pm}^j$, we have
\begin{equation*}
	|e^{-\lambda_{+}t}| \leq e^{-\frac t2}, \qquad |e^{-\lambda_{-}t}| \leq \begin{cases} e^{-\frac t2}, \qquad |\xi_1| \geq \frac 12, \\ e^{-\xi_1^2t}, \qquad |\xi_1| \leq \frac 12, \end{cases}
\end{equation*}
\begin{equation*}
	|\bold{a}_{\pm}^1|^2 = |\bold{a}_{\pm}^2|^2 = \xi_1^2 + |\lambda_{\mp}|^2, \qquad |\langle \bold{b}_{\pm}^2,e_2 \rangle|^2 = \frac 1{|1-4\xi_1^2|}, \qquad |\langle \bold{b}_{\pm}^2,e_4 \rangle|^2 = \frac {|\lambda_{-}|^2}{\xi_1^2|1-4\xi_1^2|}.
\end{equation*}
To cancel out the singularity at $\xi_1 = \frac 12$, let us define strip domains
	\begin{equation}\label{domain}
		\Omega_1 := \{ \xi \in \bbR^2 ; |\xi_1| \geq \frac 12 \}, \qquad \Omega_2 := \{ \xi \in \bbR^2 ; \frac 12 \geq |\xi_1| \geq \frac 14 \}, \qquad \Omega_3 := \{ \xi \in \bbR^2 ; |\xi_1| \leq \frac 14 \}
	\end{equation}
	so that it holds $\bbR^2 = \Omega_1 \cup \Omega_2 \cup \Omega_3$. Then, the following lemma holds.
\begin{lemma}\label{lem_note}
	Let $f=(f_1,f_2,f_3,f_4) \in \bbC^4$. Then, there exists a constant $C>0$ such that
	\begin{equation}\label{Omg_1}
		|(e^{-\lambda_{-}t} - e^{-\lambda_{+}t}) \langle f,\mathbf{a}_{-}^2 \rangle \langle \mathbf{b}_{-}^2, e_2 \rangle| + |(e^{-\lambda_{-}t} - e^{-\lambda_{+}t}) \langle f,\mathbf{a}_{-}^2 \rangle \langle \mathbf{b}_{-}^2, e_4 \rangle| \leq C e^{-\frac t4} |f|, \qquad \xi \in \Omega_1,
	\end{equation}
	\begin{equation}\label{Omg_2}
		|(e^{-\lambda_{-}t} - e^{-\lambda_{+}t}) \langle  f,\mathbf{a}_{-}^2 \rangle \langle \mathbf{b}_{-}^2, e_2 \rangle| + |(e^{-\lambda_{-}t} - e^{-\lambda_{+}t}) \langle  f,\mathbf{a}_{-}^2 \rangle \langle \mathbf{b}_{-}^2, e_4 \rangle| \leq Ce^{-\frac t4} |f|, \qquad \xi \in \Omega_2,
	\end{equation}
	\begin{equation}\label{Omg_4}
		|(e^{-\lambda_{-}t} - e^{-\lambda_{+}t}) \langle  f,\mathbf{a}_{-}^2 \rangle \langle \mathbf{b}_{-}^2, e_2 \rangle| \leq Ce^{-\xi_1^2t}(|\xi_1^2f_2| + |\xi_1f_4|), \qquad \xi \in \Omega_3,
	\end{equation}
	\begin{equation}\label{Omg_3}
%		|(e^{-\lambda_{-}t} - e^{-\lambda_{+}t}) \langle  \widehat{\mathbf{f}},\mathbf{a}_{-}^1 \rangle \langle \mathbf{b}_{-}^1, e_3 \rangle| = 
		|(e^{-\lambda_{-}t} - e^{-\lambda_{+}t}) \langle  f,\mathbf{a}_{-}^2 \rangle \langle \mathbf{b}_{-}^2, e_4 \rangle| \leq Ce^{-\xi_1^2t}(|\xi_1f_2| + |f_4|), \qquad \xi \in \Omega_3.
	\end{equation}
\end{lemma}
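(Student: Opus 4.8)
The plan is to reduce all four inequalities to a single elementary estimate on the divided difference $\tfrac{e^{-\lambda_-t}-e^{-\lambda_+t}}{\lambda_+-\lambda_-}$, after first rewriting the two scalar products in closed form. Using the explicit formulas for $\mathbf{a}_-^2$ and $\mathbf{b}_-^2$ together with the algebraic relations $\lambda_++\lambda_-=1$, $\lambda_+\lambda_-=\xi_1^2$, $\lambda_+-\lambda_-=\sqrt{1-4\xi_1^2}$, a direct computation should give
\[
\langle f,\mathbf{a}_-^2\rangle\langle\mathbf{b}_-^2,e_2\rangle=\frac{-\lambda_-f_2-i\xi_1f_4}{\sqrt{1-4\xi_1^2}},\qquad \langle f,\mathbf{a}_-^2\rangle\langle\mathbf{b}_-^2,e_4\rangle=\frac{-i\xi_1f_2+\lambda_+f_4}{\sqrt{1-4\xi_1^2}}.
\]
Since $\sqrt{1-4\xi_1^2}=\lambda_+-\lambda_-$, every quantity appearing in \eqref{Omg_1}--\eqref{Omg_3} is then of the form $\tfrac{e^{-\lambda_-t}-e^{-\lambda_+t}}{\lambda_+-\lambda_-}\,N(\xi,f)$, where $N$ is linear in $(f_2,f_4)$ with coefficients drawn from $\{\xi_1,\lambda_\pm\}$. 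In particular the $f_2$-coefficient in the first product is $\lambda_-$, which is $O(\xi_1^2)$ for small $\xi_1$; this is what produces the extra factor $|\xi_1|$ in \eqref{Omg_4} relative to \eqref{Omg_3}.

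The elementary input is that for $t\ge0$ and $z\in\bbC$ with $\Re z\ge0$ one has $|1-e^{-z}|=\bigl|\int_0^z e^{-w}\,dw\bigr|\le\min\{|z|,2\}$, since $|e^{-w}|\le1$ along the segment from $0$ to $z$. Because $\lambda_+-\lambda_-$ is real and $\ge0$ when $|\xi_1|\le\tfrac12$ and purely imaginary when $|\xi_1|\ge\tfrac12$, applying this with $z=(\lambda_+-\lambda_-)t$ and using $e^{-\lambda_-t}-e^{-\lambda_+t}=e^{-\lambda_-t}\bigl(1-e^{-(\lambda_+-\lambda_-)t}\bigr)$ yields
\[
\Bigl|\tfrac{e^{-\lambda_-t}-e^{-\lambda_+t}}{\lambda_+-\lambda_-}\Bigr|\le e^{-t\,\Re\lambda_-}\,\min\Bigl\{t,\ \tfrac{2}{\sqrt{|1-4\xi_1^2|}}\Bigr\}.
\]
This is the cancellation that neutralizes the apparent blow-up of $\mathbf{b}_-^2$ at $|\xi_1|=\tfrac12$: there $\lambda_+=\lambda_-$ and the eigenbasis degenerates, so $1/\sqrt{1-4\xi_1^2}$ diverges inside $N/\sqrt{1-4\xi_1^2}$, but it is absorbed — uniformly in $t$ — by the matching simple zero of $e^{-\lambda_-t}-e^{-\lambda_+t}$.

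It then remains to run this strip by strip. On $\Omega_1$ ($|\xi_1|\ge\tfrac12$) we have $\Re\lambda_-=\tfrac12$ and $|\lambda_\pm|=|\xi_1|$, hence $|N|\lesssim|\xi_1|(|f_2|+|f_4|)$; I would split once more, using $\sqrt{4\xi_1^2-1}\ge\sqrt3\,|\xi_1|$ when $|\xi_1|\ge1$ (so $|N|/\sqrt{4\xi_1^2-1}\lesssim|f|$ and the $e^{-t/2}$ factor closes it), and when $\tfrac12\le|\xi_1|\le1$ using $|N|\lesssim|f|$ with the $\min$-bound, producing a factor $t\,e^{-t/2}\lesssim e^{-t/4}$. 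On $\Omega_3$ ($|\xi_1|\le\tfrac14$) there is no singularity: $\sqrt{1-4\xi_1^2}\ge\tfrac{\sqrt3}{2}$ and $\xi_1^2\le\Re\lambda_-\le2\xi_1^2$, $\lambda_+\le1$, so the divided difference is $\lesssim e^{-\xi_1^2t}$; multiplying by $N$ — whose coefficients are $|\xi_1|$ and $\lambda_-=O(\xi_1^2)$ — gives exactly \eqref{Omg_4} and \eqref{Omg_3}. On $\Omega_2$ ($\tfrac14\le|\xi_1|\le\tfrac12$) the eigenvalues are real and $|N|\lesssim|f|$; here one combines the two halves of the $\min$: where $|\xi_1|$ stays away from $\tfrac12$ the factor $1/\sqrt{1-4\xi_1^2}$ is bounded and $\Re\lambda_-$ is bounded below by a positive constant, while as $|\xi_1|\to\tfrac12$ the bound $t\,e^{-t\Re\lambda_-}$ with $\Re\lambda_-\to\tfrac12$ takes over, giving an exponential bound of the type \eqref{Omg_2}.

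The main obstacle is precisely the degeneracy at $|\xi_1|=\tfrac12$ — the interface between $\Omega_1$ and $\Omega_2$ — where the matrix realizing the decomposition $\mathbf{u}=\sum\langle\mathbf{u},\mathbf{a}_\gamma^j\rangle\mathbf{b}_\gamma^j$ becomes singular and $|\mathbf{b}_\pm^j|\to\infty$. The content of the lemma is that, after the rearrangement of the first step, this singularity is confined to the single scalar factor $1/\sqrt{1-4\xi_1^2}$, which is then cancelled uniformly in $t$ by the divided-difference estimate of the second step. The remainder is bookkeeping: keeping the sharp $e^{-t/2}$ rate for large $|\xi_1|$ on $\Omega_1$ and the precise powers of $\xi_1$ on $\Omega_3$, both achieved by the sub-splittings indicated above.
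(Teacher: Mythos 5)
Your proposal is correct and follows the same strategy as the paper: express the inner products so that the degeneracy at $|\xi_1|=\tfrac12$ sits in the single scalar $1/\sqrt{1-4\xi_1^2}=1/(\lambda_+-\lambda_-)$, observe that the divided difference $\bigl(e^{-\lambda_-t}-e^{-\lambda_+t}\bigr)/(\lambda_+-\lambda_-)$ stays bounded uniformly in $t$ near that degeneracy, and then argue strip by strip (the paper subdivides $\Omega_1$ at $|\xi_1|=\tfrac34$ rather than at $1$, and invokes the mean value theorem where you use $|1-e^{-z}|\le\min\{|z|,2\}$, but these are cosmetic choices). Your version is slightly cleaner on two small points — the closed forms $\langle f,\mathbf{a}_-^2\rangle\langle\mathbf{b}_-^2,e_2\rangle=\dfrac{-\lambda_-f_2-i\xi_1f_4}{\sqrt{1-4\xi_1^2}}$ and $\langle f,\mathbf{a}_-^2\rangle\langle\mathbf{b}_-^2,e_4\rangle=\dfrac{-i\xi_1f_2+\lambda_+f_4}{\sqrt{1-4\xi_1^2}}$ make the $\xi_1$-powers in \eqref{Omg_4}--\eqref{Omg_3} transparent, and the integral bound for $|1-e^{-z}|$ handles the complex eigenvalues on $\Omega_1$ without the informal appeal to the mean value theorem — but the route is the same.
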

\begin{proof}
	To show \eqref{Omg_1}, we divide the domain $\Omega_1$ into $\{ \xi \in \bbR^2 ; |\xi_1| \geq \frac 34 \}$ and $\{ \xi \in \bbR^2 ; \frac 34 \geq |\xi_1| \geq \frac 12 \}$. On the first set, we can see $ |e^{-\lambda_{-}t} - e^{-\lambda_{+}t}| \leq Ce^{-\frac{t}{2}}$, $|\bfa_{-}^2|^2 \leq C\xi_1^2$, and $$|\langle \bold{b}_{\pm}^2,e_2 \rangle|^2 + |\langle \bold{b}_{\pm}^2,e_4 \rangle|^2 = \frac {1+\xi_1^2}{|1-4\xi_1^2|} \leq C.$$ This implies \eqref{Omg_1}. On the second set, we note by the mean value theorem that $$\frac{|e^{-\lambda_{-}t} - e^{-\lambda_{+}t}|}{|\sqrt{1-4\xi_1^2}|} = \frac{|e^{-\lambda_{-}t} - e^{-\lambda_{+}t}|}{|\lambda_{+} - \lambda_{-}|} \leq t e^{-\frac{t}{2}} \leq C e^{-\frac{t}{4}}.$$ With the fact $|\xi_1| \sim 1$, we can deduce \eqref{Omg_1}. Since \eqref{Omg_2} can be obtained similarly, we omit the details. It remains to show \eqref{Omg_4} and \eqref{Omg_3}. We note $|\langle  \widehat{\mathbf{f}},\mathbf{a}_{-}^2 \rangle| \leq |i\xi_1 f_2| + |\lambda_{+} f_4| \leq |\xi_1 f_2| + C|f_4|$ and $$|\langle \bold{b}_{\pm}^2,e_2 \rangle|^2 \leq \frac {\xi_1^2}{|1-4\xi_1^2|}, \qquad |\langle \bold{b}_{\pm}^2,e_4 \rangle|^2 \leq \frac {1}{|1-4\xi_1^2|}$$ for $\xi \in \Omega_3$. 
 The above two inequalities correspond to \eqref{Omg_4} and \eqref{Omg_3}, respectively. Then with the estimate
	\begin{equation}\label{omg_est}
        \lambda_{-} = \frac{2\xi_1^2}{1+\sqrt{1-4\xi_1^2}} \geq \xi_1^2, \qquad \xi \in \Omega_2 \cup \Omega_3,
    \end{equation}
	we can deduce  both\eqref{Omg_4} and \eqref{Omg_3}. This completes the proof.
\end{proof}

\begin{lemma}\label{lem_decay2}
	Let $j \in \bbN \cup \{0\}$. Then there exists a constant $C>0$ such that
	\begin{equation}\label{decay_est2}
		\big\| \xi_1^j e^{-\lambda_{\pm}t} \widehat{f} \big\|_{L^2} \leq C (1+t)^{-\frac{j}{2}-\frac{1}{4}} \| f \|_{H^{j} \cap L^1_{x_1}L^2_{x_2}}
	\end{equation}
	for all $t\geq0$ and $f \in H^{j} \cap L^1_{x_1}L^2_{x_2}(\bbR^2)$.
\end{lemma}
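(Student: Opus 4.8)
The plan is to treat the $\lambda_{+}$ and $\lambda_{-}$ branches separately, and within the $\lambda_{-}$ branch to split the frequency space into a high- and a low-frequency slab. For $\lambda_{+}$ everything is trivial: the bound $|e^{-\lambda_{+}t}| \le e^{-t/2}$ holds on all of $\bbR^2$, so Plancherel gives $\|\xi_1^j e^{-\lambda_{+}t}\widehat f\|_{L^2} \le e^{-t/2}\|\xi_1^j \widehat f\|_{L^2} \lesssim e^{-t/2}\|f\|_{H^j}$, and $e^{-t/2}\le C_j(1+t)^{-j/2-1/4}$ for all $t\ge 0$. The same argument disposes of the $\lambda_{-}$ branch on the high-frequency slab $\{|\xi_1|\ge \tfrac12\}$, where again $|e^{-\lambda_{-}t}|\le e^{-t/2}$. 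Thus the whole statement reduces to bounding $\|\xi_1^j e^{-\lambda_{-}t}\widehat f\|_{L^2}$ over the low-frequency slab $\{|\xi_1|\le \tfrac12\}=\Omega_2\cup\Omega_3$, where $\lambda_{-}$ is real-valued and one only has the degenerate Gaussian-in-$\xi_1$ bound $|e^{-\lambda_{-}t}|\le e^{-\xi_1^2 t}$ supplied by \eqref{omg_est}.

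On this last region I would split $0\le t\le 1$ from $t\ge 1$. For $t\le 1$ one has $(1+t)^{-j/2-1/4}\simeq 1$, so the bound is immediate from $e^{-\xi_1^2 t}\le 1$ and $\|\xi_1^j\widehat f\|_{L^2}\lesssim\|f\|_{H^j}$. For $t\ge 1$ the key point is that the mixed norm $\|f\|_{L^1_{x_1}L^2_{x_2}}$ controls $\sup_{\xi_1}\|\widehat f(\xi_1,\cdot)\|_{L^2_{\xi_2}}$: writing $\widehat f$ as the $x_1$-Fourier transform of the partial $x_2$-Fourier transform of $f$ and applying Minkowski's integral inequality in $x_1$ together with Plancherel in $x_2$ yields $\|\widehat f(\xi_1,\cdot)\|_{L^2_{\xi_2}}\le \|f\|_{L^1_{x_1}L^2_{x_2}}$ uniformly in $\xi_1$. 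Combining this with Fubini and $|e^{-\lambda_{-}t}|\le e^{-\xi_1^2 t}$,
\begin{equation*}
\big\| \xi_1^j e^{-\lambda_{-}t}\widehat f \big\|_{L^2(\{|\xi_1|\le 1/2\})}^2 \le \|f\|_{L^1_{x_1}L^2_{x_2}}^2 \int_{\bbR} \xi_1^{2j} e^{-2\xi_1^2 t}\,\ud\xi_1 = C_j\, t^{-j-\frac12}\,\|f\|_{L^1_{x_1}L^2_{x_2}}^2,
\end{equation*}
the last equality being the scaling substitution $\xi_1\mapsto \xi_1/\sqrt t$. Since $t^{-j-1/2}\le (1+t)^{-j-1/2}$ for $t\ge 1$, taking square roots gives exactly the rate $(1+t)^{-j/2-1/4}$, and assembling the three contributions proves \eqref{decay_est2}.

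I do not expect a genuine obstacle: after the frequency splitting this is nothing more than the one-dimensional $L^1$–$L^2$ smoothing bound for the heat semigroup, applied in the $x_1$ variable only. The single point demanding care—and the one that dictates the precise anisotropic choice of norm in \eqref{decay_est2}—is that the decay carried by $\lambda_{-}$ is purely directional (Gaussian in $\xi_1$ only, via \eqref{omg_est}), so the weight $e^{-\xi_1^2 t}$ must be paired with an $L^\infty$ estimate in $\xi_1$ and an $L^2$ estimate in $\xi_2$, i.e.\ with $\|f\|_{L^1_{x_1}L^2_{x_2}}$ rather than with the isotropic $L^1(\bbR^2)$ norm of the earlier literature.
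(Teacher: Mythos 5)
Your proof is correct and follows essentially the same route as the paper: split $\lambda_{+}$ from $\lambda_{-}$, handle everything except $\lambda_{-}$ on $\{|\xi_1|\le\frac12\}$ by the uniform bound $e^{-t/2}$, then on that slab use $e^{-\lambda_{-}t}\le e^{-\xi_1^2 t}$ from \eqref{omg_est} together with $\|\widehat f\|_{L^\infty_{\xi_1}L^2_{\xi_2}}\le\|f\|_{L^1_{x_1}L^2_{x_2}}$ and a Gaussian rescaling in $\xi_1$. The only cosmetic difference is that you absorb the $\xi_1^{2j}$ weight into the Gaussian integral in one step whereas the paper first peels off $(1+t)^{-j/2}$ and then treats the remaining Gaussian; both give the identical rate $(1+t)^{-j/2-1/4}$.
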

\begin{proof}
    For $t \leq 1$, using $|e^{-\lambda_{\pm}(\xi)t}| \leq 1$, we have
    \begin{equation*}
		\big\| |\xi_1|^j |e^{-\lambda_{\pm}t}| |\widehat{f}| \big\|_{L^2} \leq \| \partial_1^j f \|_{L^2} \leq C (1+t)^{-\frac{j}{2}-\frac{1}{4}}\| \partial_1^j f \|_{L^2}.
    \end{equation*}
    
\noindent For $t \geq 1$, since $|e^{-\lambda_{\pm}(\xi)t}| = e^{-\frac {t}{2}}$ holds when $\xi \in \Omega_1$, we get
    \begin{equation*}
		\big\| |\xi_1|^j |e^{-\lambda_{\pm}(\xi)t}| |\widehat{f}(\xi)| \big\|_{L^2(\bbR^2 \setminus \Omega)} \leq e^{-\frac{t}{2}} \| \partial_1^j f \|_{L^2} \leq C (1+t)^{-\frac{j}{2}-\frac{1}{4}} \| \partial_1^j f \|_{L^2}.
    \end{equation*}
    Similarly, by $|e^{-\lambda_{+}(\xi)t}| = e^{-\frac {t}{2}}$ for $\xi \in \Omega_2 \cup \Omega_3$, we have
    \begin{equation*}
		\big\| |\xi_1|^j |e^{-\lambda_{+}(\xi)t}| |\widehat{f}(\xi)| \big\|_{L^2(\Omega)} \leq C (1+t)^{-\frac{j}{2}-\frac{1}{4}} \big\| \partial_1^j f \big\|_{L^2}.
    \end{equation*}
    Meanwhile, from \eqref{omg_est} and
    \begin{equation*}
        \big\| e^{-\xi_1^2 t} \widehat{f}(\cdot,\xi_2) \big\|_{L^2_{\xi_1}} \leq C (1+t)^{-\frac{1}{4}} \big\| \widehat{f}(\cdot,\xi_2) \big\|_{L^{\infty}_{\xi_1}},
    \end{equation*}
    we have
    \begin{equation}\label{L2_decay_est}
    \begin{aligned}
    	\big\| |\xi_1|^j |e^{-\lambda_{-}(\xi)t}| |\widehat{f}(\xi)| \big\|_{L^2(\Omega)} &\leq C (1+t)^{-\frac{j}{2}} \big\| e^{-\xi_1^2 \frac t2} \widehat{f} \big\|_{L^2} \\
    	&\leq C (1+t)^{-\frac{j}{2}-\frac {1}{4}} \| \widehat{f} \|_{L^{\infty}_{\xi_1} L^2_{\xi_2}} \\
    	&\leq C (1+t)^{-\frac{j}{2}-\frac {1}{4}} \| f \|_{L^1_{x_1}L^2_{x_2}}.    
    \end{aligned}
	\end{equation}
	Note by the Sobolev embedding theorem that
	\begin{equation*}
		\| f \|_{L^{1}_{x_1} L^2_{x_2}} \leq C \| f \|_{W^{s,1}}, \qquad s>\frac{1}{2}.
    \end{equation*}
    Combining the above estimates, we obtain \eqref{decay_est2}. This completes the proof.
\end{proof}
Next, we show that \eqref{decay_est2} is optimal by providing the following proposition. Indeed, the $L^2$-norm decay rates of $(v,B)$ in \eqref{decay_rmk} are exactly the same with the decay rates of the decay rates of the solutions to the linearized system of \eqref{mhd_eq} (see Proposition~\ref{prop_opt2}).
\begin{proposition}\label{prop_opt}
	There exist $f^* \in \mathscr{S}(\bbR^2)$ and $C^*>0$ such that
	\begin{equation*}
		\big\| \xi_1^j e^{-\lambda_{-}t} \widehat{f}^* \big\|_{L^2} \geq C^* (1+t)^{-\frac{j}{2}-\frac{1}{4}}
	\end{equation*}
	for all $j \in \bbN \setminus \{0\}$.
	\end{proposition}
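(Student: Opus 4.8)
The plan is to exhibit $f^*$ whose Fourier transform is concentrated near the origin $\xi=0$, where $\lambda_-$ is real and comparable to $\xi_1^2$, and then to observe that the only genuine time decay of $e^{-\lambda_- t}\widehat{f}^*$ comes from the $\xi_1$ variable, while the $\xi_2$ variable contributes merely an $O(1)$ factor. This anisotropy is exactly what turns the single-variable Gaussian scaling $\int|\xi_1|^{2j}e^{-c\xi_1^2 t}\,\ud\xi_1 \sim t^{-j-1/2}$ into the exponent $-\tfrac j2-\tfrac14$ appearing in Lemma~\ref{lem_decay2}, so the lower bound will match.

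First I would fix a small $\delta\in(0,\tfrac14)$ and a nonnegative $\phi\in C_c^\infty(\bbR)$ with $\phi\equiv1$ on $[-\delta,\delta]$ and $\supp\phi\subset(-\tfrac14,\tfrac14)$, and set $\widehat{f}^*(\xi):=\phi(\xi_1)\phi(\xi_2)$, so that $f^*\in\mathscr{S}(\bbR^2)$ and $\supp\widehat{f}^*\subset\Omega_3$. On $\Omega_3$ the eigenvalue $\lambda_-$ is real and, by \eqref{omg_est} together with $1\le 1+\sqrt{1-4\xi_1^2}\le 2$, it satisfies $\xi_1^2\le\lambda_-(\xi)\le 2\xi_1^2$; in particular $|e^{-\lambda_-(\xi)t}|^2=e^{-2\lambda_-(\xi)t}\ge e^{-4\xi_1^2 t}$. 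Restricting the defining integral to the box $Q_\delta:=[-\delta,\delta]^2$, where $\widehat{f}^*\equiv1$, gives
\begin{equation*}
	\big\| \xi_1^j e^{-\lambda_- t}\widehat{f}^* \big\|_{L^2}^2 \ge \int_{Q_\delta} |\xi_1|^{2j} e^{-4\xi_1^2 t}\,\ud\xi = 2\delta \int_{-\delta}^{\delta} |\xi_1|^{2j} e^{-4\xi_1^2 t}\,\ud\xi_1 .
\end{equation*}

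Next, for $t\ge1$ I would rescale by $\eta:=2\sqrt t\,\xi_1$, turning the last integral into $(2\sqrt t)^{-(2j+1)}\int_{-2\delta\sqrt t}^{2\delta\sqrt t}|\eta|^{2j}e^{-\eta^2}\,\ud\eta \ge (2\sqrt t)^{-(2j+1)} c_j$ with $c_j:=\int_{-2\delta}^{2\delta}|\eta|^{2j}e^{-\eta^2}\,\ud\eta>0$ (using $2\delta\sqrt t\ge 2\delta$). Hence $\| \xi_1^j e^{-\lambda_- t}\widehat{f}^*\|_{L^2}^2\ge 2\delta\,2^{-(2j+1)}c_j\,t^{-j-1/2}$, and since $t^{-j/2-1/4}\ge(1+t)^{-j/2-1/4}$ this yields $\| \xi_1^j e^{-\lambda_- t}\widehat{f}^*\|_{L^2}\ge C_1(j,\delta)(1+t)^{-j/2-1/4}$ for all $t\ge1$. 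For $0\le t\le1$, on $Q_\delta$ one has $e^{-4\xi_1^2 t}\ge e^{-4\delta^2}\ge e^{-1}$, so $\| \xi_1^j e^{-\lambda_- t}\widehat{f}^*\|_{L^2}^2\ge 2\delta e^{-1}\tfrac{2\delta^{2j+1}}{2j+1}$, a positive constant $C_2(j,\delta)^2$; since $(1+t)^{-j/2-1/4}\le1$ on $[0,1]$, the desired inequality holds there as well. Taking $C^*:=\min\{C_1(j,\delta),C_2(j,\delta)\}$ finishes the proof, with $C^*$ depending on $j$.

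I do not anticipate a genuine obstacle here: this is the standard matching lower bound dual to Lemma~\ref{lem_decay2}, and the only point requiring care is the bookkeeping of the exponent. The $\tfrac12$ in $t^{-(2j+1)/2}$ comes from integrating $|\xi_1|^{2j}e^{-c\xi_1^2 t}$ in the single dissipative variable $\xi_1$, whereas the non-dissipative variable $\xi_2$ only contributes the constant factor $2\delta=\int_{-\delta}^{\delta}\ud\xi_2$; this is precisely the anisotropy $\lambda_-(\xi)\sim\xi_1^2$ near $\xi_1=0$ that underlies the whole analysis. One should also note that the constant $C^*$ genuinely depends on $j$, since $c_j=\int_{-2\delta}^{2\delta}|\eta|^{2j}e^{-\eta^2}\,\ud\eta\to0$ as $j\to\infty$ precludes any $j$-uniform constant.
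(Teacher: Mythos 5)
Your proof is correct and follows essentially the same route as the paper's: take $\widehat{f}^*$ a smooth cutoff supported in $\Omega_3$ where $\lambda_-\sim\xi_1^2$, rescale $\xi_1\mapsto\xi_1\sqrt t$, and read off the exponent $-\tfrac j2-\tfrac14$ (one half-power from the dissipative $\xi_1$ integral, none from $\xi_2$). The only cosmetic differences are that the paper allows a general $\phi(\xi_1)\sigma(|\xi_1|)\varphi(\xi_2)$ with $|\phi(0)\sigma(0)|>0$ rather than a plateau cutoff, and leaves the $t\le C$ case implicit; your observation that $C^*$ is necessarily $j$-dependent is consistent with the paper's proof as well.
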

\begin{proof}
    We consider a cut-off function $\sigma \in C^{\infty}_c([0,1/4))$ with $\sigma(0) = 1$. For any complex-valued functions $\varphi \in L^2(\bbR)$ and $\phi \in C^{\infty}(\bbR)$ with $|\phi(0)| > 0$, we let $f^* \in \mathscr{S}(\bbR^2)$ satisfying $\widehat{f}^* = \phi(\xi_1)\sigma(|\xi_1|) \varphi(\xi_2)$. It suffices to show that
	\begin{equation*}
	\big\| \xi_1^j e^{-\lambda_{-}t} \widehat{f}^* \big\|_{L^2} \geq C^* t^{-\frac{j}{2}-\frac{1}{4}}, \qquad t \geq C.
	\end{equation*}
    Since $\supp(\widehat{f}^*) \subset \Omega$ holds, we have
    \begin{equation*}
	\big\| \xi_1^j e^{-\lambda_{-}t} \widehat{f}^* \big\|_{L^2} \geq \| \xi_1^j e^{-2\xi_1^2 t} \phi(\xi_1)\sigma(|\xi_1|) \|_{L^2_{\xi_1}} \| \varphi(\xi_2) \|_{L^2_{\xi_2}}.
	\end{equation*}
	Let $\eta := \xi_1 \sqrt{t}$. We note that $|\phi(\eta/\sqrt{t}) \sigma(\eta/\sqrt{t})| \geq \frac 12 |\phi(0)\sigma(0)|>0$ for all $|\eta| \leq 1$ and $t \geq C$. Thus, we have
    \begin{equation*}
	\| \xi_1^j e^{-2\xi_1^2 t} \phi(\xi_1) \sigma(|\xi_1|) \|_{L^2_{\xi_1}} = t^{-\frac j2-\frac 14} \| \eta^j e^{-2\eta^2} \phi(\eta/\sqrt{t}) \sigma(\eta/\sqrt{t}) \|_{L^2_{\eta}} \geq  C^* |\phi(0)\sigma(0)| t^{-\frac j2-\frac 14}
	\end{equation*}
	and obtain the claim. This completes the proof.
\end{proof}

It remains to prove that such temporal decay can be established for the actual solutions to the linearized equations.  

\begin{proposition}\label{prop_opt2}
    There exists an initial data $\bfu_0=(v_0,B_0) \in \mathscr{S}(\bbR^2)$ with $\operatorname{div}v_0=\operatorname{div}B_0=0$ such that the solution $\bfu=(v,B)$ to the linearized system \eqref{lin_system} has the exact $L^2$-norm decay rate in \eqref{decay_rmk}.
\end{proposition}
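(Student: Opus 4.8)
The plan is to work on the Fourier side, where the linearized system \eqref{lin_system} decouples into two identical scalar $2\times2$ systems, and to exhibit an explicit datum concentrated near $\xi_1=0$ whose slow eigenmode realizes each of the four rates in \eqref{decay_rmk}. With the ordering $\bfu=(v_1,v_2,B_1,B_2)$, the matrix $J$ couples $(\widehat v_1,\widehat B_1)$ and $(\widehat v_2,\widehat B_2)$ through the same $2\times2$ block, whose eigenvalues are $\lambda_{\pm}$. I would fix an even $\rho\in C_c^\infty((-\tfrac14,\tfrac14))$ with $\rho(0)=1$ and a nonzero real odd $\varphi\in\mathscr S(\bbR)$, and take $v_0\equiv0$ and $B_0\in\mathscr S(\bbR^2)$ with $\widehat{B_0}(\xi)=(\xi_2,-\xi_1)\,\rho(\xi_1)\,\varphi(\xi_2)$. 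Then $\bfu_0\in\mathscr S(\bbR^2)$ is real-valued, $\xi\cdot\widehat{B_0}(\xi)\equiv0$ so $\operatorname{div}v_0=\operatorname{div}B_0=0$, and $\supp\widehat{\bfu}_0\subset\Omega_3$, on which $\lambda_{\pm}$ are real and distinct with $\lambda_{+}\ge\tfrac12$ and $0\le\lambda_{-}\le 2\xi_1^2$. Since the two blocks obey the same linear equation and the second block's datum equals $-\tfrac{\xi_1}{\xi_2}$ times the first's, the solution satisfies $\widehat v_2(t)=-\tfrac{\xi_1}{\xi_2}\widehat v_1(t)$ and $\widehat B_2(t)=-\tfrac{\xi_1}{\xi_2}\widehat B_1(t)$ for all $t$; thus only the first block must be solved.

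Solving the first block with datum $(0,\xi_2\rho(\xi_1)\varphi(\xi_2))$ gives, on $\Omega_3$,
\begin{equation*}
\widehat B_1(t)=\frac{\lambda_{+}e^{-\lambda_{-}t}-\lambda_{-}e^{-\lambda_{+}t}}{\lambda_{+}-\lambda_{-}}\,\widehat B_1(0),\qquad \widehat v_1(t)=\frac{i\xi_1\,(e^{-\lambda_{+}t}-e^{-\lambda_{-}t})}{\lambda_{+}-\lambda_{-}}\,\widehat B_1(0),
\end{equation*}
which one checks directly using $\lambda_{+}+\lambda_{-}=1$ and $\lambda_{+}\lambda_{-}=\xi_1^2$. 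On $\Omega_3$ one has $|e^{-\lambda_{+}t}|\le e^{-t/2}$, so up to an $L^2$ remainder of size $O(e^{-t/2})$ the components $\widehat B_1(t),\widehat v_1(t),\widehat B_2(t),\widehat v_2(t)$ equal $e^{-\lambda_{-}t}$ times, respectively, $h(\xi_1)\widehat B_1(0)$, $-i\xi_1\tilde h(\xi_1)\widehat B_1(0)$, $-\xi_1 h(\xi_1)\rho(\xi_1)\varphi(\xi_2)$, and $i\xi_1^2\tilde h(\xi_1)\rho(\xi_1)\varphi(\xi_2)$, where $h(\xi_1)=\lambda_{+}/\sqrt{1-4\xi_1^2}$ and $\tilde h(\xi_1)=1/\sqrt{1-4\xi_1^2}$ are smooth, even, and bounded above and below by positive constants on $\supp\rho$. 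Hence each leading term has the form $\xi_1^{k}e^{-\lambda_{-}t}\widehat g$ with $g\in\mathscr S(\bbR^2)$ and $k=0,1,1,2$ for $B_1,v_1,B_2,v_2$ respectively.

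For the upper bounds I would apply Lemma~\ref{lem_decay2} with $j=0,1,1,2$ to these leading terms, the $O(e^{-t/2})$ remainders being negligible, obtaining
\begin{equation*}
(1+t)^{\frac14}\|B_1(t)\|_{L^2}+(1+t)^{\frac34}\big(\|v_1(t)\|_{L^2}+\|B_2(t)\|_{L^2}\big)+(1+t)^{\frac54}\|v_2(t)\|_{L^2}\lesssim1;
\end{equation*}
after replacing $\bfu_0$ by $\epsilon\bfu_0$ with $\epsilon$ small the constant is $\le1$, which matches \eqref{decay_rmk}. For the lower bounds I would, for $t\ge1$, discard the $O(e^{-t/2})$ remainder, use $e^{-\lambda_{-}t}\ge e^{-2\xi_1^2 t}$ on $\Omega_3$ together with the positive lower bounds on $h,\tilde h$, and then run the rescaling $\eta=\xi_1\sqrt t$ exactly as in the proof of Proposition~\ref{prop_opt} (which applies verbatim for $k=0$ as well). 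This gives $\|B_1(t)\|_{L^2}\gtrsim(1+t)^{-\frac14}$, $\|v_1(t)\|_{L^2}\gtrsim(1+t)^{-\frac34}$, $\|B_2(t)\|_{L^2}\gtrsim(1+t)^{-\frac34}$, and $\|v_2(t)\|_{L^2}\gtrsim(1+t)^{-\frac54}$ for all $t\ge1$, so every rate in \eqref{decay_rmk} is attained by this solution.

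The only genuine work is the lower bound: one must verify that the honest slow mode $e^{-\lambda_{-}t}\widehat B_1(0)$ is not cancelled by the fast mode and that the prefactors $h,\tilde h$ stay away from $0$ on the support. Once the exponentially small $e^{-\lambda_{+}t}$ terms are peeled off this is precisely the scaling computation of Proposition~\ref{prop_opt}, so I do not expect a real obstacle; verifying the explicit formula and keeping track of the four powers of $\xi_1$ produced by the eigenvectors and by incompressibility is routine.
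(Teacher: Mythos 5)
Your proof is correct and follows essentially the same route as the paper's: pick data supported in $\Omega_3$, separate the slow mode $e^{-\lambda_-t}$ from the exponentially small fast mode, and run the $\eta=\xi_1\sqrt t$ rescaling of Proposition~\ref{prop_opt} for the lower bounds. Your execution is marginally cleaner in a few spots — taking $v_0=0$ and solving the two identical $2\times2$ Fourier blocks directly avoids the $\mathbf{a}^j_\pm,\mathbf{b}^j_\pm$ bookkeeping, the incompressibility relation lets you read off the second block from the first, and you explicitly note that the scaling argument of Proposition~\ref{prop_opt} extends to $j=0$ (which is needed for $\|B_1\|_{L^2}\gtrsim t^{-1/4}$ but lies outside its stated range $j\in\bbN\setminus\{0\}$; the paper uses this silently).
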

\begin{proof}
     We consider the initial data 
    \begin{equation*}
        \widehat{\mathbf{u}}_0 = (\widehat{v}_0,\widehat{B}_0) = \sigma(|\xi_1|) \sigma(|\xi_2|) (\xi_2e_1 - \xi_1 e_2 + \xi_2e_3 - \xi_1e_4),
    \end{equation*} where $\sigma \in C^{\infty}_c([0,1/4))$ is a cut-off function with $\sigma(0)=1$. Then,  the exact solution to the linearized system of \eqref{mhd_eq} can be written as $(\widehat{v},\widehat{B}) = \sum_{j=1,2} e^{-\lambda_{\pm}t} \langle \widehat{\mathbf{u}}_0,\mathbf{a}_{\pm}^j \rangle \mathbf{b}_{\pm}^j$. We only prove here that
    \begin{equation*}
        \| v_1(t) \|_{L^2} \geq \tilde{C}t^{-\frac{3}{4}}, \qquad t \geq C,
    \end{equation*}
    because we can similarly show
    \begin{equation*}
        \| v_2(t) \|_{L^2} \geq \tilde{C}t^{-\frac{5}{4}}, \qquad \| B_1(t) \|_{L^2} \geq \tilde{C}t^{-\frac{1}{4}}, \qquad \| B_2(t) \|_{L^2} \geq \tilde{C}t^{-\frac{3}{4}}, \qquad t \geq C.
    \end{equation*}
    Since $\widehat{v}_1 = e^{-\lambda_{\pm}t} \langle \widehat{\mathbf{u}}_0,\mathbf{a}_{\pm}^1 \rangle \langle \mathbf{b}_{\pm}^1,e_1 \rangle$ and $\supp \mathbf{u}_0 \subset \Omega$, it follows
    \begin{align*}
        \| v_1(t) \|_{L^2} &\geq \| e^{-\lambda_{-}t} \langle \widehat{\mathbf{u}}_0,\mathbf{a}_{-}^1 \rangle \langle \mathbf{b}_{-}^1,e_1 \rangle \|_{L^2} - \| e^{-\lambda_{+}t} \langle \widehat{\mathbf{u}}_0,\mathbf{b}_{+}^1 \rangle \langle \mathbf{b}_{+}^1,e_1 \rangle \|_{L^2} \\
        &\geq \| e^{-\lambda_{-}t} \langle \widehat{\mathbf{u}}_0,\mathbf{b}_{-}^1 \rangle \langle \mathbf{b}_{-}^1,e_1 \rangle \|_{L^2} - Ce^{-\frac t2}.
    \end{align*}
    Note that
    \begin{equation*}
       \langle \widehat{\mathbf{u}}_0,\mathbf{b}_{-}^1 \rangle \langle \mathbf{b}_{-}^1,e_1 \rangle = \sigma(|\xi_1|) \sigma(|\xi_2|) \xi_2 (\frac{i\xi_1}{\sqrt{\lambda_{+}}} - \sqrt{\lambda_{+}}) \frac{i\xi_1}{\sqrt{\lambda_{+}}} = \xi_1 (-\frac{\xi_1}{\lambda_{+}} - i) \sigma(|\xi_1|) \sigma(|\xi_2|) \xi_2.
    \end{equation*}
    Applying Proposition~\ref{prop_opt}, we can see
    \begin{align*}
        \| e^{-\lambda_{-}t} \langle \widehat{\mathbf{u}}_0,\mathbf{b}_{-}^1 \rangle \langle \mathbf{b}_{-}^1,e_1 \rangle \|_{L^2} \geq \tilde{C}t^{-\frac{3}{4}}, \qquad t \geq C.
    \end{align*}
    This completes the proof.
\end{proof} 

\subsection{Local well-posedness}

\begin{proposition}\label{loc_prop}For $s>2,$ and the initial data $(v_0,B_0)\in H^{s}(\bbR^2)$ with $\nb \cdot v_0 = \nb \cdot B_0 = 0$, there exists a maximal time of existence $T_{\ast}=T_{\ast}(s,\|v_0\|_{H^s},\|B_0\|_{H^s})>0$ such that the equations \eqref{mhd_eq} have a unique solution $(v,B)\in C([0,T_{\ast});H^s(\bbR^2)).$
\end{proposition}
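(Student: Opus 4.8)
The plan is to remove the pressure via the Leray projection, recognize \eqref{mhd_eq} as a quasilinear symmetric-hyperbolic system perturbed by a benign zeroth-order damping and by constant-coefficient first-order terms, and then run the classical energy method: uniform $H^s$ bounds for a Friedrichs-mollified approximation, passage to the limit, and a low-order energy estimate that simultaneously yields uniqueness and continuity in time. \emph{Pressure elimination.} Taking $\nb\cdot$ of the first equation of \eqref{mhd_eq} and using $\nb\cdot v=\nb\cdot B=0$ (so that $\nb\cdot v=0$, $\nb\cdot\rd_1 B=0$) gives $-\Dlt p=\nb\cdot\nb\cdot(v\otimes v-B\otimes B)$; thus $p$ is recovered from $(v,B)$ by a zeroth-order Calderón--Zygmund operator applied to the quadratic nonlinearity. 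Equivalently, with $\bbP:=I-\nb\Dlt^{-1}\nb\cdot$ the Leray projection — a Fourier multiplier, hence bounded on every $H^s$ and commuting with $\Lmb^s:=(I-\Dlt)^{s/2}$ — the system becomes
\begin{equation*}
\rd_t v+v+\bbP\big[(v\cdot\nb)v-(B\cdot\nb)B\big]=\rd_1 B,\qquad \rd_t B+(v\cdot\nb)B-(B\cdot\nb)v=\rd_1 v,
\end{equation*}
using $\bbP v=v$ and $\bbP\rd_1 B=\rd_1 B$. One checks that the $B$-equation propagates $\nb\cdot B=0$: applying $\nb\cdot$ and using $\nb\cdot[(v\cdot\nb)B-(B\cdot\nb)v]=(v\cdot\nb)(\nb\cdot B)-(B\cdot\nb)(\nb\cdot v)$ together with $\nb\cdot\rd_1 v=0$ yields a transport equation for $\nb\cdot B$ with zero initial data.

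\emph{A priori estimate.} For smooth solutions, apply $\Lmb^s$ to both equations and pair in $L^2$ with $\Lmb^s v$ and $\Lmb^s B$. The damping contributes $-\|\Lmb^s v\|_{L^2}^2\le0$; the first-order coupling cancels since $\int\Lmb^s\rd_1 B\cdot\Lmb^s v+\int\Lmb^s\rd_1 v\cdot\Lmb^s B=\int\rd_1(\Lmb^s B\cdot\Lmb^s v)=0$; and the top-order nonlinear pieces cancel by the divergence-free structure, namely $\int(v\cdot\nb)\Lmb^s v\cdot\Lmb^s v=\int(v\cdot\nb)\Lmb^s B\cdot\Lmb^s B=0$ and $\int(B\cdot\nb)\Lmb^s B\cdot\Lmb^s v+\int(B\cdot\nb)\Lmb^s v\cdot\Lmb^s B=0$. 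The remaining commutators $[\Lmb^s,v\cdot\nb]$ and $[\Lmb^s,B\cdot\nb]$ are controlled by the Kato--Ponce and Moser product estimates together with the Sobolev embedding $H^s(\bbR^2)\hookrightarrow W^{1,\infty}(\bbR^2)$ valid for $s>2$, giving with $E(t):=\|v(t)\|_{H^s}^2+\|B(t)\|_{H^s}^2$ the bound
\begin{equation*}
\frac{\ud}{\ud t}E(t)\le C\big(\|\nb v\|_{L^\infty}+\|\nb B\|_{L^\infty}\big)E(t)\le C\, E(t)^{3/2},
\end{equation*}
hence a uniform bound on $[0,T_\ast]$ with $T_\ast\sim(\|v_0\|_{H^s}+\|B_0\|_{H^s})^{-1}$.

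\emph{Construction, uniqueness, time-continuity.} Replacing $v\cdot\nb$ and $B\cdot\nb$ by Friedrichs-mollified operators $J_\eps$ turns the system into a Banach-space ODE on $H^s$ solvable by Picard--Lindelöf; the estimate above is uniform in $\eps$ because the $J_\eps$ are self-adjoint and commute with $\Lmb^s$, so the approximations $(v^\eps,B^\eps)$ exist on a common interval $[0,T_\ast]$, are bounded in $L^\infty([0,T_\ast];H^s)$ with $\rd_t(v^\eps,B^\eps)$ bounded in $L^\infty([0,T_\ast];H^{s-1})$, and — by the same kind of $L^2$ estimate applied to differences — form a Cauchy sequence in $C([0,T_\ast];L^2)$. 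Interpolating with the uniform $H^s$ bound gives convergence in $C([0,T_\ast];H^{s'})$ for every $s'<s$ to a solution $(v,B)$, and the divergence constraints pass to the limit. Uniqueness follows from the $L^2$ estimate for the difference $(w,C)$ of two solutions with the same data, whose top-order transport terms again cancel, so $\frac{\ud}{\ud t}(\|w\|_{L^2}^2+\|C\|_{L^2}^2)\lesssim(\|\nb v\|_{L^\infty}+\|\nb\bar v\|_{L^\infty}+\|\nb B\|_{L^\infty}+\|\nb\bar B\|_{L^\infty})(\|w\|_{L^2}^2+\|C\|_{L^2}^2)$ and Gronwall force $w=C=0$. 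Finally, $C([0,T_\ast];H^s)$-regularity rather than mere weak continuity is obtained either from a Bona--Smith argument — comparing the solution to solutions with mollified data via the stability estimate — or from weak continuity in $H^s$ together with continuity of $t\mapsto\|(v,B)(t)\|_{H^s}$, which follows from the energy identity.

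\emph{Maximal time and main obstacle.} Letting $T_\ast$ be the supremum of existence times, Gronwall applied to the differential inequality above shows that $T_\ast<\infty$ forces $\int_0^{T_\ast}(\|\nb v\|_{L^\infty}+\|\nb B\|_{L^\infty})\,\ud t=\infty$, so the solution extends up to a maximal time depending only on $s$, $\|v_0\|_{H^s}$, $\|B_0\|_{H^s}$. There is no deep obstruction here: the system is ideal 2D MHD plus a bounded damping and a constant-coefficient skew coupling, squarely within the classical quasilinear hyperbolic framework. The only points requiring genuine care are keeping the divergence-free conditions and the pressure mutually consistent throughout the approximation scheme, and upgrading $L^\infty_tH^s$ to $C_tH^s$, which needs the Bona--Smith-type argument rather than a soft compactness argument.
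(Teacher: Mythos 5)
Your proposal is correct and follows essentially the same route as the paper: apply $\Lambda^s$ to the system, pair with $\Lambda^s(v,B)$, use the skew cancellation of the linear $\partial_1$-coupling and the divergence-free cancellation of the top-order transport terms, bound the commutators by Kato--Ponce together with the embedding $H^s(\bbR^2)\hookrightarrow W^{1,\infty}$ for $s>2$, and close via Gronwall after a mollification/compactness scheme, as the paper defers to \cite{Feff1}. One small remark: you are slightly more careful than the paper's sketch, since the paper writes bounds for $N_1$ and $N_3$ separately even though the highest-order pieces $\int(B\cdot\nabla)\Lambda^s B\cdot\Lambda^s v$ and $\int(B\cdot\nabla)\Lambda^s v\cdot\Lambda^s B$ are not individually controlled by $\|B\|_{H^s}\|v\|_{H^s}$ and only vanish when added, a cancellation you state explicitly.
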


\begin{proof}
    For brevity, here we show only the simple \emph{a priori} $H^s$ energy inequality. Following the proof in \cite{Feff1}, one can modify this to prove the proposition with Fourier truncation or standard mollification.
    
    Assume that $(v,B)$ is the sufficiently regular solution to \eqref{mhd_eq}. Define $\Lambda^{s}$ by the Fourier symbol $|\xi|^{s}$ for $s>2$. Apply $\Lambda^s$ to the both equations for $v$ and $B$ in \eqref{mhd_eq}. Then take the $L^2$ inner product of the first equation for $v$ with $\Lambda^s v$ and the inner product of the second equation for $B$ with $\Lambda^s B$. Adding them together yields, combined with the cancellation property
    $$\int \Lambda^s (\partial_1 B) \Lambda^s v + \int \Lambda^s (\partial_1 v) \Lambda^s B =0  $$
    stemming from integration by parts for $\partial_1$,
    \begin{equation}\label{Hs}
        \frac{1}{2}\frac{\ud}{\ud t}(\|\Lambda^s v\|_{L^2}^2+\|\Lambda^s B \|_{L^2}^2) + \|\Lambda^s v\|_{L^2}^2 = N_1 + N_2 + N_3 + N_4
    \end{equation}
where we have
\begin{equation*}
    \begin{split}
        N_1:&=\int \Lambda^s [B\cdot \nb B] \Lambda^s v \leq c\|\nb B\|_{L^\infty}\|B\|_{H^s}\|v\|_{H^s} \\
        N_2:&= -\int \Lambda^s[v\cdot\nb v] \Lambda^s v \leq c\|\nb v\|_{L^\infty}\|v\|_{H^s}^2 \\
        N_3:&= \int \Lambda^s[B\cdot \nb v] \Lambda^s B \leq c(\|\nb B\|_{L^\infty}\|v\|_{H^s} + \|\nb v\|_{L^\infty}\|B\|_{H^s})\|B\|_{H^s} \\
        N_4:&= \int -\Lambda^s[v\cdot\nb B] \Lambda^s B \leq c(\|\nb v\|_{L^\infty}\|B\|_{H^s}+\|\nb B\|_{L^\infty}\|v\|_{H^s})\|B\|_{H^s}
    \end{split}
\end{equation*}
by the usual Kato-Ponce type commutator estimates, see \cite{KP}. We add \eqref{Hs} to the $L^2$ energy equality
 \begin{equation}
     \frac{1}{2}\frac{\ud}{\ud t}(\|v\|_{L^2}^2+\|B\|_{L^2}^2)+ \|v\|_{L^2}^2 = 0
 \end{equation}
 and then apply the Sobolev inequalities $\|\nb v\|_{L^\infty} \leq C(s) \|v\|_{H^s}$ and $\|\nb B\|_{L^\infty} \leq C(s) \|B\|_{H^s}$ for $s>2$ in two dimensions to obtain
 \begin{equation*}
     \frac{1}{2}\frac{\ud}{\ud t}(\|v\|_{H^s}^2+\|B \|_{H^s}^2) + \|v\|_{H^s}^2 \leq c (\|B\|_{H^s}^2\|v\|_{H^s} + \|v\|_{H^s}^3).
 \end{equation*}
 Young's product inequality finally yields
 \begin{equation*}
     \frac{\ud}{\ud t}(\|v\|_{H^s}^2+\|B \|_{H^s}^2) + \|v\|_{H^s}^2 \leq c (\|v\|_{H^s}^2+\|B\|_{H^s}^2)^2.
 \end{equation*}
 Setting up $Y(t)=\|v\|_{H^s}^2+\|B\|_{H^s}^2$, we can view it as
 \begin{equation*}
     \frac{\ud}{\ud t}Y(t) \leq c Y(t)^2
 \end{equation*}
 which is the desired closed form that would lead to the application of Gronwall's inequality. \end{proof}

\subsection{Energy estimates}
Let $(v,B)$ and $T^*$ be the classical solution and the maximal time of existence in Proposition~\ref{loc_prop}. For simplicity, we use the notation
\begin{equation}\label{E_def}
	E(t)^2 := \| v(t) \|_{H^m}^2 + \| B(t) \|_{H^m}^2,
\end{equation}
\begin{equation}\label{A_def}
	A(t) := \sum_{0 \leq |\alpha| \leq m-1} \int_{\bbR^2} \partial^\alpha B(t,x) \cdot \partial_1 \partial^\alpha v(t,x) \,\mathrm{d}x,
\end{equation}
for $m \in \bbN$. We can easily verify that
\begin{equation}\label{A_est}
	|A(t)| \leq \frac 12 E(t)^2.
\end{equation}
\begin{proposition}\label{prop_eng}
	Let $m \in \bbN$ with $m>2$. Suppose that $(v,B) \in C([0,\infty);H^m(\bbR^2))$ is a classical solution of \eqref{mhd_eq}. Then there exists a constant $C=C(m)>0$ such that
	\begin{equation}\label{Em_ineq}
		\begin{gathered}
			\frac {\mathrm{d}}{\mathrm{d}t} \left( E(t)^2 + A(t) \right) + \frac 12 \| v(t) \|_{H^m}^2 + \frac 12 \| \partial_1 B(t) \|_{H^{m-1}}^2 \leq C E(t) \left( \| v(t) \|_{H^m}^2 + \| \partial_1 B(t) \|_{H^{m-1}}^2 \right)\\
			+ C \| \partial_1v(t) \|_{L^\infty} \| B(t) \|_{H^m}^2 + C \| B_2(t) \|_{L^\infty} \| \partial_1 B(t) \|_{H^{m-1}} \| B(t) \|_{H^m}
		\end{gathered}
	\end{equation}
	for all $t \geq 0$.
\end{proposition}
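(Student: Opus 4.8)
The plan is to run two coupled energy identities — the plain $H^m$ estimate for $(v,B)$ and a time‑derivative estimate for the auxiliary functional $A$ — and add them so that both the velocity damping and the dissipation $\|\partial_1 B\|_{H^{m-1}}^2$ hidden in the linear coupling surface on the left; all the real work is then in bounding the resulting cubic remainder, which I abbreviate $2\mathcal N_{H^m}+\mathcal N_A$, by the right-hand side of \eqref{Em_ineq}.

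First I apply $\partial^\alpha$ for $0\le|\alpha|\le m$ to \eqref{mhd_eq}, pair the $v$-equation with $\partial^\alpha v$ and the $B$-equation with $\partial^\alpha B$, and sum. The pressure drops by $\nabla\cdot v=0$, and the linear coupling $\sum_{|\alpha|\le m}\int\partial^\alpha(\partial_1 B)\cdot\partial^\alpha v+\sum_{|\alpha|\le m}\int\partial^\alpha(\partial_1 v)\cdot\partial^\alpha B$ vanishes after integrating $\partial_1$ by parts in $x_1$; this gives $\frac{\mathrm d}{\mathrm dt}E^2+2\|v\|_{H^m}^2=2\mathcal N_{H^m}$, with no gain on $B$. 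Next I differentiate $A$ in time and substitute $\partial_t v=-v+\partial_1 B+\nabla p+(\text{nonlin.})$ and $\partial_t B=\partial_1 v+(\text{nonlin.})$. The pressure again drops by $\nabla\cdot B=0$; the piece $\int\partial^\alpha B\cdot\partial_1^2\partial^\alpha B=-\|\partial_1\partial^\alpha B\|_{L^2}^2$ is the crucial dissipative gain, the damping term $-v$ contributes $-A$, and the linear part of $\partial_t B$ contributes $\|\partial_1\partial^\alpha v\|_{L^2}^2$; summing over $|\alpha|\le m-1$ I get $\frac{\mathrm d}{\mathrm dt}A=\|\partial_1 v\|_{H^{m-1}}^2-\|\partial_1 B\|_{H^{m-1}}^2-A+\mathcal N_A$.

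Adding the two identities, I use $\|\partial_1 v\|_{H^{m-1}}^2\le\|v\|_{H^m}^2$ together with the observation that $|A|=|\sum_{|\alpha|\le m-1}\int\partial_1\partial^\alpha B\cdot\partial^\alpha v|\le\tfrac12\|\partial_1 B\|_{H^{m-1}}^2+\tfrac12\|v\|_{H^{m-1}}^2$ (integrate $\partial_1$ by parts, then Young) — this is what makes the cost $-A$ affordable. The linear part of $\frac{\mathrm d}{\mathrm dt}(E^2+A)$ is then $\le-\tfrac12\|v\|_{H^m}^2-\tfrac12\|\partial_1 B\|_{H^{m-1}}^2$, and moving these to the left produces exactly the left side of \eqref{Em_ineq}, so it remains to estimate $2\mathcal N_{H^m}+\mathcal N_A$.

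This last step is the heart of the matter and is where the anisotropic structure is used. Two exact cancellations come for free: in $\mathcal N_{H^m}$ the top-order pieces of $(B\cdot\nabla)B$ tested against $v$ and of $(B\cdot\nabla)v$ tested against $B$ cancel once $B\cdot\nabla$ is integrated by parts (using $\nabla\cdot B=0$), and every "self-transport" piece $\int(w\cdot\nabla)\partial^\alpha f\cdot\partial^\alpha f$ with $w\in\{v,B\}$ vanishes by incompressibility. For everything that remains — the commutators $[\partial^\alpha,v_i\partial_i]$, $[\partial^\alpha,B_i\partial_i]$ in $\mathcal N_{H^m}$ and the terms of $\mathcal N_A$ — the guiding principle is: whenever a $\partial_2$ (or a top-order derivative) lands on a highest-order factor, integrate it by parts onto a lower-order coefficient and then invoke the incompressibility relations $\partial_2 v_2=-\partial_1 v_1$, $\partial_2 B_2=-\partial_1 B_1$ and their differentiated forms $\partial_2\partial^\alpha v_2=-\partial_1\partial^\alpha v_1$, $\partial_2\partial^\alpha B_2=-\partial_1\partial^\alpha B_1$ to convert that $\partial_2$ into a $\partial_1$. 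After this rerouting, the only "small" factors that can survive are $\|\partial_1 v\|_{L^\infty}$ (when the single copy of $v$ carrying the good derivative is placed in $L^\infty$), $\|B_2\|_{L^\infty}$ (forced precisely by the bad direction of a $B$-transport — e.g. $-\int B_2\,\partial_2\partial^\alpha B_i\,\partial_1\partial^\alpha B_i$ becomes, after using $\nabla\cdot\partial^\alpha B=0$, an integral with $B_2$ an $L^\infty$ coefficient against $\partial_1\partial^\alpha B_1$ and $\nabla\partial^\alpha B$), and $\|\partial_1 B\|_{H^{m-1}}$; every other factor is a piece of $E$. Distributing each trilinear term over $L^2\times L^2\times L^\infty$ with the 2D Sobolev embedding (valid since $m>2$, with ample room for $m\ge4$) and using $\|v\|_{H^m},\|B\|_{H^m}\le E$ and $\|\partial_1 B\|_{H^{m-1}}\le\|B\|_{H^m}$, each term falls into one of the three families on the right of \eqref{Em_ineq}. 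The main obstacle is exactly this bookkeeping: a naive Kato–Ponce estimate produces terms such as $\|\nabla v\|_{L^\infty}\|B\|_{H^m}^2$ or $\|v\|_{H^m}\|B\|_{H^m}^2$, which cannot be absorbed because $\|B\|_{H^m}$ enjoys no coercive control; one genuinely has to exploit $\nabla\cdot v=\nabla\cdot B=0$ to route every dangerous derivative into the $x_1$-direction before estimating.
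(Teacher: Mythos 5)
Your proposal is correct and follows essentially the same route as the paper: the $H^m$ energy identity, the auxiliary cross functional $A(t)=\sum_{|\alpha|\le m-1}\int\partial^\alpha B\cdot\partial_1\partial^\alpha v$ whose time derivative surfaces the $\|\partial_1 B\|_{H^{m-1}}^2$ coercivity, absorption of the $-A$ cost via Young after an integration by parts, and — crucially — the anisotropic treatment of the cubic remainders that isolates the top‑order $\partial_2^{|\alpha|}$ direction and converts $\partial_2$ into $\partial_1$ through $\partial_2 v_2=-\partial_1 v_1$, $\partial_2 B_2=-\partial_1 B_1$ so that only $\|\partial_1 v\|_{L^\infty}$, $\|B_2\|_{L^\infty}$ and $\|\partial_1 B\|_{H^{m-1}}$ survive as "small" factors. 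The paper carries out the same combination (multiplying the $H^m$ inequality by two and adding the $A$-estimate) and the same case split $\partial^\alpha\ne\partial_2^{|\alpha|}$ versus $\partial^\alpha=\partial_2^{|\alpha|}$; your sketch of the nonlinear bookkeeping is stated at a high level and would need the paper's explicit peeling of one $\partial_2$ and the further integration by parts to be made rigorous, but the ideas and the resulting bound are the same.
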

\begin{proof}
	We multiply \eqref{mhd_eq} by $(v,B)$ and integrate it over $\bbR^2$.  From $\mathrm{div}\, v = \mathrm{div}\, B = 0$, we obtain
	\begin{gather*}
		\frac 12 \frac {\mathrm{d}}{\mathrm{d}t} \left( \|v\|_{L^2}^2 + \|B\|_{L^2}^2 \right) + \|v\|_{L^2}^2 =0
	\end{gather*}
 due to integration by parts. For any multi-index $\alp$ with $1 \leq |\alpha| \leq m$, multiplying \eqref{mhd_eq} by $\partial^{2\alpha}(v,B)$ and integrating it over $\bbR^2$, we further have
	\begin{equation} \label{energy_alpha}
	\begin{split}
		\frac 12 \frac {\mathrm{d}}{\mathrm{d}t} \left( \|\partial^\alp v\|_{L^2}^2 +\|\partial^\alp B\|_{L^2}^2 \right) + \|\partial^\alpha v\|_{L^2}^2
		&= - \int_{\bbR^2} \partial^\alpha [(v \cdot \nabla)v] \cdot \partial^\alpha v + \partial^\alpha [(v\cdot \nb)B] \cdot \partial^\alpha B \,\mathrm{d}x\\ 
		&\quad+ \int_{\bbR^2} \partial^\alpha [(B \cdot \nabla)B] \cdot \partial^\alpha v + \partial^\alpha[(B\cdot\nb)v] \cdot \partial^\alpha B \,\mathrm{d}x
		\end{split}
	\end{equation}
	where we used integration by parts similarly such as the cancellation property
	$$\int_{\bbR^2} \partial^{\alpha} \partial_1B \cdot \partial^{\alpha} v \,\mathrm{d}x =- \int_{\bbR^2} \partial^{\alpha} \partial_1v \cdot \partial^{\alpha} B \,\mathrm{d}x.$$
	The first term involving $v$ and the derivatives of $v$ only in \eqref{energy_alpha} can be bounded as
	\begin{align*}
		\left| \int_{\bbR^2} \partial^\alpha (v \cdot \nabla)v \cdot \partial^\alpha v \,\mathrm{d}x\right| = \left|\int_{\bbR^2} \left( \partial^\alpha (v \cdot \nabla)v - (v \cdot \nabla)\partial^\alpha v \right) \cdot \partial^\alpha v \,\mathrm{d}x\right| \leq C\|v\|_{H^m}^3.
	\end{align*}
	We used the fact $\int (v \cdot \nabla)f \cdot f = 0$ for the equality, and the calculus inequality combined with the Sobolev embedding $L^{\infty}(\bbR^2) \hookrightarrow H^s(\bbR^2)$ for $s>1$ for the inequality.
	To estimate the other integrals, we consider the case $\partial^{\alpha} \neq \partial_2^{|\alpha|}$ first. Similarly with the above, we estimate the second term on the right-hand side of \eqref{energy_alpha} as
	\begin{equation*}
		\left| \int_{\bbR^2} \partial^\alpha (v \cdot \nabla)B \cdot \partial^\alpha B \,\mathrm{d}x \right| \leq C \| v \|_{H^m} \| B \|_{H^m} \| \partial_1 B \|_{H^{m-1}}.
	\end{equation*}
	Since integration by parts yields
	\begin{equation}\label{cancel_est}
		\int_{\bbR^2} (B \cdot \nabla)\partial^\alpha B \cdot \partial^\alpha v \,\mathrm{d}x + \int_{\bbR^2} (B \cdot \nabla)\partial^\alpha v \cdot \partial^\alpha B \,\mathrm{d}x = 0,
	\end{equation}
	we can infer that the other two integrals in \eqref{energy_alpha} can be estimated together as
	\begin{gather*}
		\left| \int_{\bbR^2} \partial^\alpha (B \cdot \nabla)B \cdot \partial^\alpha v \,\mathrm{d}x + \int_{\bbR^2} \partial^\alpha (B \cdot \nabla)v \cdot \partial^\alpha B \,\mathrm{d}x \right| \leq C \| v \|_{H^m} \| B \|_{H^m} \| \partial_1 B \|_{H^{m-1}}.
	\end{gather*}
	Now we treat the case $\partial^{\alpha}=\partial_2^{|\alpha|}$. Observe that
	\begin{equation*}
		\int_{\bbR^2} \partial^\alpha (v \cdot \nabla)B \cdot \partial^\alpha B \,\mathrm{d}x = \int_{\bbR^2} \partial_2^{|\alpha|-1} (\partial_2v \cdot \nabla)B \cdot \partial_2^{|\alpha|} B \,\mathrm{d}x.
	\end{equation*}
	The divergence-free condition implies $(\partial_2v \cdot \nabla)B = \partial_2v_1\partial_1B - \partial_1v_1 \partial_2B$ so that we have
	\begin{equation}\label{Bv_est}
		\int_{\bbR^2} \partial^\alpha (v \cdot \nabla)B \cdot \partial^\alpha B \,\mathrm{d}x = \int_{\bbR^2} \partial_2^{|\alpha|-1} (\partial_2v_1\partial_1B) \cdot \partial_2^{|\alpha|} B \,\mathrm{d}x - \int_{\bbR^2} \partial_2^{|\alpha|-1} (\partial_1v_1 \partial_2B) \cdot \partial_2^{|\alpha|} B \,\mathrm{d}x.
	\end{equation}
	The first integral of the right-hand side is estimated as
	\begin{equation*}
		\left| \int_{\bbR^2} \partial_2^{|\alpha|-1} (\partial_2v_1\partial_1B) \cdot \partial_2^{|\alpha|} B \,\mathrm{d}x \right| \leq C \| v \|_{H^m} \| B \|_{H^m} \| \partial_1 B \|_{H^{m-1}}.
	\end{equation*}
	Noting that
	\begin{equation}\label{vB_est}
		- \int_{\bbR^2} \partial_2^{|\alpha|-1} (\partial_1v_1 \partial_2B) \cdot \partial_2^{|\alpha|} B \,\mathrm{d}x = - \int_{\bbR^2} \partial_1v_1 |\partial_2^{|\alpha|} B |^2 \,\mathrm{d}x - \int_{\bbR^2} \partial_2^{|\alpha|-2} (\partial_1 \partial_2 v_1 \partial_2B) \cdot \partial_2^{|\alpha|} B \,\mathrm{d}x,
	\end{equation}
	we perform integration by parts to the second integral in \eqref{Bv_est} twice to obtain
	\begin{equation*}
		\left| \int_{\bbR^2} \partial_2^{|\alpha|-2} (\partial_1 \partial_2 v_1 \partial_2B) \cdot \partial_2^{|\alpha|} B \,\mathrm{d}x \right| \leq C \| v \|_{H^m} \| B \|_{H^m} \| \partial_1 B \|_{H^{m-1}}.
	\end{equation*}
	This implies that
	\begin{equation*}
		\left| \int_{\bbR^2} \partial_2^{|\alpha|-1} (\partial_1v_1 \partial_2B) \cdot \partial_2^{|\alpha|} B \,\mathrm{d}x \right| \leq \| \partial_1v_1 \|_{L^\infty} \| \partial_2^{|\alpha|} B \|_{L^2}^2 + C \| v \|_{H^m} \| B \|_{H^m} \| \partial_1 B \|_{H^{m-1}},
	\end{equation*}
	which leads to
\begin{equation*}
		\left| \int_{\bbR^2} \partial^\alpha (v \cdot \nabla)B \cdot \partial^\alpha B \,\mathrm{d}x \right| \leq \| \partial_1v_1 \|_{L^\infty} \| \partial_2^{|\alpha|} B \|_{L^2}^2 + C \| v \|_{H^m} \| B \|_{H^m} \| \partial_1 B \|_{H^{m-1}}.
	\end{equation*}
	By the use of \eqref{cancel_est}, it suffices to estimate the integral
	\begin{gather*}
		\int_{\bbR^2} \partial^\alpha (B \cdot \nabla)(B \cdot \partial^\alpha v +v \cdot \partial^\alpha B) \,\mathrm{d}x = \int_{\bbR^2} \partial_2^{|\alpha|-1} (\partial_2B \cdot \nabla)(B \cdot \partial_2^{|\alpha|} v + v \cdot \partial_2^{|\alpha|} B) \,\mathrm{d}x.
	\end{gather*}
	We can estimate the right-hand side by 
	$ C \| v \|_{H^m} \| B \|_{H^m} \| \partial_1 B \|_{H^{m-1}}$ except for the term that would be estimated as
	\begin{equation*}
		\left| \int_{\bbR^2} \partial_2^{|\alpha|-1} (\partial_2B \cdot \nabla)v_1 \cdot \partial_2^{|\alpha|} B_1 \,\mathrm{d}x \right| \leq \| \partial_1v_1 \|_{L^\infty} \| \partial_2^{|\alpha|} B \|_{L^2}^2 + C \| v \|_{H^m} \| B \|_{H^m} \| \partial_1 B \|_{H^{m-1}}
	\end{equation*}
	thanks to the relation $(\partial_2B \cdot \nabla)v_1 = \partial_2B_1 \partial_1 v_1 - \partial_1B_1 \partial_2 v_1$ from $\operatorname{div}B=0$. It follows that
	\begin{gather*}
		\left| \int_{\bbR^2} \partial^\alpha (B \cdot \nabla)(B \cdot \partial^\alpha v + v \cdot \partial^\alpha B) \,\mathrm{d}x \right| \leq \| \partial_1v_1 \|_{L^\infty} \| \partial_2^{|\alpha|} B \|_{L^2}^2 + C \| v \|_{H^m} \| B \|_{H^m} \| \partial_1 B \|_{H^{m-1}}.
	\end{gather*}
	Combining all the above estimates, we get
	\begin{equation}\label{Hm_est}
	\begin{gathered}
		\frac 12 \frac {\mathrm{d}}{\mathrm{d}t} E(t)^2 + \| v(t) \|_{H^m}^2 \\
		\leq 2\| \partial_1v(t) \|_{L^\infty} \| B(t) \|_{H^m}^2 + C \| v(t) \|_{H^m} \| \partial_1 B(t) \|_{H^{m-1}} \| B(t) \|_{H^m} + C \| v(t) \|_{H^m}^3.
	\end{gathered}
	\end{equation}
	To involve $A(t)$ as stated in \eqref{Em_ineq}, recalling the definition \eqref{A_def}, let $\alpha$ satisfy $0 \leq |\alpha| \leq m-1$. From the equations \eqref{mhd_eq}, we have
	\begin{gather*}
		\int_{\bbR^2} \partial_t \partial^\alpha v \cdot \partial_1 \partial^\alpha B \,\mathrm{d}x + \int_{\bbR^2} \partial^\alpha v \cdot \partial_1 \partial^\alpha B \,\mathrm{d}x + \int_{\bbR^2} \partial^\alpha (v \cdot \nabla)v \cdot \partial_1 \partial^\alpha B \,\mathrm{d}x \\
		- \int_{\bbR^2} \partial^\alpha (B \cdot \nabla)B \cdot \partial_1 \partial^\alpha B \,\mathrm{d}x = \int_{\bbR^2} |\partial_1\partial^\alpha B|^2 \,\mathrm{d}x
	\end{gather*}
	and
	\begin{equation*}
		\int_{\bbR^2} \partial_t \partial^\alpha B \cdot \partial_1 \partial^\alpha v \,\mathrm{d}x + \int_{\bbR^2} \partial^\alpha (v \cdot \nabla)B \cdot \partial_1 \partial^\alpha v \,\mathrm{d}x - \int_{\bbR^2} \partial^\alpha (B \cdot \nabla)v \cdot \partial_1 \partial^\alpha v \,\mathrm{d}x = \int_{\bbR^2} |\partial_1\partial^\alpha v|^2 \,\mathrm{d}x.
	\end{equation*}
	Substracting the frist equality from the second equality yields
	\begin{gather*}
		\frac {\mathrm{d}}{\mathrm{d}t} \int_{\bbR^2} \partial^\alpha B \cdot \partial_1 \partial^\alpha v \,\mathrm{d}x - \int_{\bbR^2} \partial^\alpha (v \cdot \nabla)v \cdot \partial_1 \partial^\alpha B \,\mathrm{d}x + \int_{\bbR^2} \partial^\alpha (B \cdot \nabla)B \cdot \partial_1 \partial^\alpha B \,\mathrm{d}x \\
		+ \int_{\bbR^2} \partial^\alpha (v \cdot \nabla)B \cdot \partial_1 \partial^\alpha v \,\mathrm{d}x - \int_{\bbR^2} \partial^\alpha (B \cdot \nabla)v \cdot \partial_1 \partial^\alpha v \,\mathrm{d}x \\
		= \int_{\bbR^2} |\partial_1\partial^\alpha v|^2 \,\mathrm{d}x + \int_{\bbR^2} \partial^\alpha v \cdot \partial_1 \partial^\alpha B \,\mathrm{d}x - \int_{\bbR^2} |\partial_1\partial^\alpha B|^2 \,\mathrm{d}x.
	\end{gather*}
	We estimate the integrals on the left-hand side first. We clearly see that
	\begin{gather*}
		\left| \int_{\bbR^2} \partial^\alpha (v \cdot \nabla)v \cdot \partial_1 \partial^\alpha B \,\mathrm{d}x \right| + \left| \int_{\bbR^2} \partial^\alpha (v \cdot \nabla)B \cdot \partial_1 \partial^\alpha v \,\mathrm{d}x \right| + \left| \int_{\bbR^2} \partial^\alpha (B \cdot \nabla)v \cdot \partial_1 \partial^\alpha v \,\mathrm{d}x \right| \\
		\leq C \| v \|_{H^m}^2 \| B \|_{H^m}.
	\end{gather*}
	In the case of $\partial^\alpha \neq \partial_2^{|\alpha|}$, simply we obtain
	\begin{equation*}
		\left| \int_{\bbR^2} \partial^\alpha (B \cdot \nabla)B \cdot \partial_1 \partial^\alpha B \,\mathrm{d}x \right| \leq C \| B \|_{H^m} \| \partial_1 B \|_{H^{m-1}}^2,
	\end{equation*}
	and for $\partial^\alpha = \partial_2^{|\alpha|}$, we can use the calculus inequality to get
	\begin{equation*}
		\left| \int_{\bbR^2} \partial^\alpha (B \cdot \nabla)B \cdot \partial_1 \partial^\alpha B \,\mathrm{d}x \right| \leq \| B_2 \|_{L^\infty} \| \partial_2^{|\alpha|+1} B \|_{L^2} \| \partial_1 \partial_2^{|\alpha|} B \|_{L^2} + C \| B \|_{H^m} \| \partial_1 B \|_{H^{m-1}}^2.
	\end{equation*}
	Meanwhile, the integrals on the right-hand side are controlled as
	\begin{gather*}
		\int_{\bbR^2} |\partial_1\partial^\alpha v|^2 \,\mathrm{d}x + \int_{\bbR^2} \partial^\alpha v \cdot \partial_1 \partial^\alpha B \,\mathrm{d}x - \int_{\bbR^2} |\partial_1\partial^\alpha B|^2 \,\mathrm{d}x \\
		\leq \int_{\bbR^2} |\partial_1\partial^\alpha v|^2 \,\mathrm{d}x + \frac 12 \int_{\bbR^2} |\partial^\alpha v|^2 \,\mathrm{d}x - \frac 12 \int_{\bbR^2} |\partial_1\partial^\alpha B|^2 \,\mathrm{d}x
	\end{gather*}
	and so we deduce that
	\begin{gather*}
		\frac {\mathrm{d}}{\mathrm{d}t} A_m(t) - \| B_2 \|_{L^\infty} \| B \|_{H^m} \| \partial_1 B \|_{H^{m-1}} - C \| B \|_{H^m} \| \partial_1 B \|_{H^{m-1}}^2 - C \| v \|_{H^m}^2 \| B \|_{H^m} \\
		\leq \frac 32 \| v \|_{H^m}^2 - \frac 12 \| \partial_1 B \|_{H^{m-1}}^2.
	\end{gather*}
	Multiplying \eqref{Hm_est} by $2$ and applying the just above inequality, \eqref{Em_ineq} is established as desired. This finishes the proof.
\end{proof}

\section{Proof of Theorem~\ref{thm1}: Global existence}\label{sec_thm}
In this section, we prove the existence of the unique global-in-time classical solution to \eqref{mhd_eq} for sufficiently small initial data. To this end, we perform a standard continuity argument based on the energy inequality obtained in Proposition~\ref{prop_eng}. To bound the right-hand side of \eqref{Em_ineq}, essentially we need the certain time integrability of the key terms as
\begin{equation}\label{purp}
	\| \partial_1 v(t) \|_{L^\infty(\bbR^2)} \in L^1(0,\infty) \qquad \mbox{and} \qquad \| B_2(t) \|_{L^\infty(\bbR^2)} \in L^2(0,\infty).
\end{equation}
To achieve that, we exploit the finer structure of the anisotropic linear propagator; we utilize the estimates in Lemma~\ref{lem_note}. The second integrability condition in \eqref{purp} is obtained in Proposition \ref{prop_B2}, being bounded below mainly by the other key quantity $\|\wwhat{\partial_1 v}\|_{L_T^1 L^1}$. This reduces the whole proof to estimating $\|\wwhat{\partial_1 v}\|_{L_T^1 L^1}$, which requires a sophisticated treatment - indeed, it is controlled by \emph{itself} multiplied by a factor that can be made small. See Proposition \ref{prop_vderiv}. 

Considering \eqref{purp}, we naturally define the target quantity $G$ by
\begin{equation}\label{G}
G(T):= \left( \sup_{t\in[0,T]}E(t)^2 + \int_{0}^{T}\left(\|v\|_{H^m}^2+\|\partial_1 B\|_{H^{m-1}}^2\right)\,\mathrm{d}t \right)^{\frac{1}{2}}.
\end{equation}
One may recall the definition of $E(t)$ in \eqref{E_def}. At the final step of the proof of the global existence, we use a standard continuity argument to show that $G(T)$ stays small once it was small. Then the solution can be extended for all time by Proposition \ref{loc_prop}. To execute such continuity argument, thanks to Proposition \ref{prop_eng}, it suffices to control the two key terms $\|B_2\|_{L^\infty}$ and $\|\partial_1 v\|_{L^\infty}$ in terms of the quantity $G(T)$, which is the main content of this section.

\begin{proposition}\label{prop_v} Let $m\geq 4.$ Assume that $(v,B)\in C([0,\infty);H^m(\bbR^2))$ is a classical solution to \eqref{mhd_eq}. Then there exists a constant $C>0$ such that
%	\begin{equation*}
%		\left\| \int_{\bbR^2} \frac {|\widehat{v} (t,\xi)|}{|\xi|} \,\mathrm{d}\xi \right\|_{L^2(0,T)} \leq C E(0) + CG(T)^2 + \left\| \int_{\bbR^2} \big|\widehat{B}_2 (t,\xi)\big| \,\mathrm{d}\xi \right\|_{L^2(0,T)}
%	\end{equation*}
	\begin{equation}\label{3.1_v}
\|\widehat{v}\|_{L_T^{\frac43}L^1} \lesssim E(0) + G(T)^2 + \| \widehat{B}_2 \|_{L^2_T L^1} + \|\widehat{\nb B}_2\|_{L_T^{\frac43}L^1},
\end{equation}
%	\begin{equation*}
%\|\widehat{v}_2\|_{L_T^{\frac43}L^1} \leq C E(0) + CG(T)^2 + \|\widehat{\partial_1 B}_2\|_{L_T^{\frac43}L^1}
%\end{equation*}
	\begin{equation}\label{3.1_v2_1}
		\| \frac {|\widehat{v}_2|}{\sqrt{|\xi_1|}} \|_{L_T^{\frac 43} L^1} \lesssim E(0) + \| \frac {|\widehat{B}_0|}{|\xi|} \|_{L^1} + G(T)^2 + G(T)\left(\| \widehat{v} \|_{L_T^{\frac 43} L^1} + \| \widehat{\nabla B}_2 \|_{L_T^{\frac 43} L^1} \right),
	\end{equation}
	\begin{equation}\label{3.1_v2_2}
		\| \frac {|\widehat{v}_2|}{\sqrt{|\xi_1|}} \|_{L^2(0,T;L^2_{\xi_1}L^1_{\xi_2})} \lesssim  E(0) + \| \frac {\sqrt{|\xi_1|}}{|\xi|}|\widehat{\bfu}_0| \|_{L^2_{\xi_1}L^1_{\xi_2}} + G(T)^2 + G(T) \| \widehat{\nabla B}_2 \|_{L^{\frac{4}{3}_TL^1}}.
	\end{equation}
\end{proposition}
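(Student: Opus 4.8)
The plan is to run Duhamel's representation \eqref{df_u} componentwise and read off $\widehat v_1,\widehat v_2$. Because $J$ splits into the two $2\times2$ blocks governing $(v_1,B_1)$ and $(v_2,B_2)$, one recovers $\widehat v_2$ from the $e_2$-component using \eqref{decom_est_1} and $\widehat v_1$ from the $e_1$-component using the identity obtained from \eqref{decom_est_1} by the substitution $(\mathbf a_\gamma^2,\mathbf b_\gamma^2,e_2)\mapsto(\mathbf a_\gamma^1,\mathbf b_\gamma^1,e_1)$; each such identity rewrites the sum over the non-orthogonal eigenbasis as a ``difference profile'' $(e^{-\lambda_-t}-e^{-\lambda_+t})\langle\cdot,\mathbf a_-^j\rangle\langle\mathbf b_-^j,e_\bullet\rangle$ plus a ``regular profile'' $e^{-\lambda_+t}\langle\cdot,e_\bullet\rangle$, which kills the $\xi_1=\tfrac12$ singularity. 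Abbreviating the nonlinearity by $\mathbf N:=\mathbf P(\xi)\big(\widehat{(v\cdot\nabla)\bfu}+\widehat{(B\cdot\nabla)\widetilde{\bfu}}\big)$ and using $\nabla\cdot v=\nabla\cdot B=0$ to write $(v\cdot\nabla)f=\nabla\cdot(v\otimes f)$, one has $|\widehat{\mathbf N}|\lesssim|\xi|\big(|\widehat{v\otimes\bfu}|+|\widehat{B\otimes\widetilde{\bfu}}|\big)$ componentwise. Thus each of $\widehat v_1(t),\widehat v_2(t)$ splits into a term linear in $\widehat{\bfu}_0$ and a Duhamel integral in $\mathbf N$, each in turn split into a regular and a difference profile.

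First I would dispose of the regular profiles together with the parts of the difference profiles living on $\Omega_1\cup\Omega_2$: by Lemma~\ref{lem_note} and $|e^{-\lambda_+t}|\le e^{-t/2}$ these decay like $e^{-t/4}$ pointwise in $\xi$. Their data contribution is then $\lesssim\|\widehat{\bfu}_0\|_{L^1}\lesssim\|\bfu_0\|_{H^s}\lesssim E(0)$ for any $s\in(1,m]$ (using $m\ge4$), and their Duhamel contribution is handled by Young's inequality in $t$ against the kernel $e^{-t/4}$, the bilinear bound $\|\widehat{fg}\|_{L^1}\le\|\widehat f\|_{L^1}\|\widehat g\|_{L^1}$, and $\|\widehat h\|_{L^1}\lesssim\|h\|_{H^s}$; the $|\xi|$ from $\mathbf N$ is absorbed into the $H^m$-count, and at top order one exposes, via the divergence-free condition, a factor that is either $v$ (controlled by $\|v\|_{L^2_TH^m}\le G(T)$), or $\partial_1 B$ (controlled by $\|\partial_1 B\|_{L^2_TH^{m-1}}\le G(T)$), or $B_2$ (controlled through $\|B_2\|_{L^\infty}\le\|\widehat B_2\|_{L^1}$), the complementary factor going into $L^\infty_tH^s\lesssim G(T)$. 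This produces the $E(0)+G(T)^2$-type contributions together with terms of the shape $G(T)\|\widehat B_2\|_{L^2_TL^1}$ and $G(T)\|\widehat{\nabla B}_2\|_{L^{4/3}_TL^1}$; since the continuity argument will only ever be run with $G(T)\lesssim1$, such factors $G(T)$ are harmless and these terms coincide with the ones on the right of \eqref{3.1_v}.

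The core of the argument is the part of the difference profile supported on $\Omega_3$, where Lemma~\ref{lem_note} only gives $e^{-\xi_1^2t}$ decay, gated by powers of $\xi_1$ as in \eqref{Omg_4}--\eqref{Omg_3}. For the data term, feeding $f=\widehat{\bfu}_0$ into \eqref{Omg_4} (or its $e_1$-analogue) and using the divergence-free identities $|\widehat v_{0,2}|=\tfrac{|\xi_1|}{|\xi|}|\widehat v_0|$, $|\widehat B_{0,2}|=\tfrac{|\xi_1|}{|\xi|}|\widehat B_0|$, the integrand — after dividing by $\sqrt{|\xi_1|}$ where \eqref{3.1_v2_1}--\eqref{3.1_v2_2} require it — becomes $e^{-\xi_1^2t}$ times a power $|\xi_1|^a$ with $a\ge1$ times one of $\tfrac{|\widehat B_0|}{|\xi|}$, $\tfrac{\sqrt{|\xi_1|}}{|\xi|}|\widehat{\bfu}_0|$, or $|\widehat{\bfu}_0|$. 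The $L^2_t$-in-time bounds then close via the Fubini identity $\int_0^\infty|\xi_1|^2e^{-c\xi_1^2t}\,\ud t\sim1$, which exactly absorbs the gate, and the $L^{4/3}_t$-in-time bounds close because the gate forces a time decay strictly faster than $(1+t)^{-3/4}$; this is precisely where the data norms $E(0)$, $\big\|\tfrac{|\widehat B_0|}{|\xi|}\big\|_{L^1}$, and $\big\|\tfrac{\sqrt{|\xi_1|}}{|\xi|}\widehat{\bfu}_0\big\|_{L^2_{\xi_1}L^1_{\xi_2}}$ on the right-hand sides originate. For the Duhamel term on $\Omega_3$, with $f=\widehat{\mathbf N}(\tau)$, I would use the divergence-free condition once more inside each bilinear expression so that in every product one factor is $v$, $\partial_1 B$, or $B_2$; that factor goes into $\|\widehat v\|_{L^p_TL^1}$, $\|\widehat{\nabla B}_2\|_{L^p_TL^1}$, or $\|\widehat B_2\|_{L^p_TL^1}$, the complementary one into $L^\infty_tH^s\lesssim G(T)$. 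Bounding $|\xi|\lesssim1+|\xi_2|$ on $\Omega_3$ and moving this onto the complementary factor, dispatching the frequency convolution by $\|F*G\|_{L^\infty_{\xi_1}L^1_{\xi_2}}\le\|F\|_{L^1}\|G\|_{L^\infty_{\xi_1}L^1_{\xi_2}}$, and finishing with Young's inequality in time against the $\Omega_3$ decay kernel then yields the remaining terms $\|\widehat B_2\|_{L^2_TL^1}+\|\widehat{\nabla B}_2\|_{L^{4/3}_TL^1}$ in \eqref{3.1_v} and $G(T)\big(\|\widehat v\|_{L^{4/3}_TL^1}+\|\widehat{\nabla B}_2\|_{L^{4/3}_TL^1}\big)$ in \eqref{3.1_v2_1}, with the analogous term in \eqref{3.1_v2_2}.

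Assembling the three pieces — the easy regions, the $\Omega_3$ data part, and the $\Omega_3$ nonlinear part — yields \eqref{3.1_v}; carrying the extra weight $1/\sqrt{|\xi_1|}$ through the same computation and using $|\xi_1|\le\tfrac14$ on $\Omega_3$ yields \eqref{3.1_v2_1}, and the same with the $L^2_t$ bookkeeping yields \eqref{3.1_v2_2}. The hardest part will be the $\Omega_3$ Duhamel step: one has to arrange simultaneously that the divergence-free reduction inside the nonlinearity really leaves only $v$-, $\partial_1 B$-, or $B_2$-type factors (so that the later bootstrap quantities suffice, and no norm of the full $\nabla B$ at top order is ever needed) and that the time-exponent bookkeeping closes at exactly the Lebesgue exponents $\tfrac43$ and $2$, which is tight; this must be verified for each of the several bilinear terms. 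Everything else is Young's inequality in time, the embeddings $\|\widehat h\|_{L^1}\lesssim\|h\|_{H^s}$ ($s>1$) and $\|h\|_{L^\infty}\le\|\widehat h\|_{L^1}$, and the one-dimensional $\xi_1$-integrals already met in Lemmas~\ref{lem_note} and \ref{lem_decay2}. As flagged before the statement, these estimates are not self-contained — $\|\widehat v\|_{L^{4/3}_TL^1}$ reappears on the right of \eqref{3.1_v2_1} — and will be closed only later, by absorbing such terms using the smallness of $G(T)$.
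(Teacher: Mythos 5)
Your outline of \eqref{3.1_v2_1} and \eqref{3.1_v2_2} matches the paper: project onto the $e_2$-block via \eqref{decom_est_1}, split $\bbR^2=\Omega_1\cup\Omega_2\cup\Omega_3$, and close the $\Omega_3$ contributions using \eqref{Omg_4} together with the incompressibility gates $|\widehat v_{0,2}|=\tfrac{|\xi_1|}{|\xi|}|\widehat v_0|$, $|\widehat B_{0,2}|=\tfrac{|\xi_1|}{|\xi|}|\widehat B_0|$ and the extra $|\xi_1|$-weights from $\langle\mathbf b_-^2,e_2\rangle$. That part is fine.

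For \eqref{3.1_v}, however, the route you propose does not close. You read off $\widehat v_1$ from the $e_1$-block via the analogue of \eqref{decom_est_1}. Tracking constants exactly as in Lemma~\ref{lem_note}, the $e_1$-analogue of \eqref{Omg_4} on $\Omega_3$ gives $|(e^{-\lambda_-t}-e^{-\lambda_+t})\langle f,\mathbf a_-^1\rangle\langle\mathbf b_-^1,e_1\rangle|\lesssim e^{-\xi_1^2t}\bigl(|\xi_1^2 f_1|+|\xi_1 f_3|\bigr)$, and for the data term $f=\widehat\bfu_0$ the second piece is $|\xi_1|\,e^{-\xi_1^2t}|\widehat B_{0,1}|$. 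Unlike $\widehat B_{0,2}$, the component $\widehat B_{0,1}$ carries \emph{no} favourable $\xi_1$-gate from $\operatorname{div}B_0=0$ (one only has $|\widehat B_{0,1}|=\tfrac{|\xi_2|}{|\xi|}|\widehat B_0|\lesssim|\widehat B_0|$ on $\Omega_3$), so the gate exponent here is $a=1$, not $a\geq 3/2$, and $\sup_{\xi_1}|\xi_1|e^{-\xi_1^2t}\sim(1+t)^{-1/2}\notin L^{4/3}(0,\infty)$. Since the right-hand side of \eqref{3.1_v} contains no $B_0$-weighted data norm whatsoever (only $E(0)$ and solution norms), this contribution cannot be absorbed; your claim that ``the gate forces a time decay strictly faster than $(1+t)^{-3/4}$'' fails precisely on this cross-coupled $B_{0,1}$ term. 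The paper avoids the diagonalization altogether here: since the $v$-equation in \eqref{mhd_eq} carries its own damping $+v$, it writes Duhamel directly,
\begin{equation*}
\widehat v(t)=e^{-t}\widehat v_0-\int_0^te^{-(t-\tau)}\wwhat{\bbP\,(v\cdot\nabla)v}\,\ud\tau+\int_0^te^{-(t-\tau)}\wwhat{\bbP\,(B\cdot\nabla)B}\,\ud\tau+\int_0^te^{-(t-\tau)}\wwhat{\partial_1 B}\,\ud\tau,
\end{equation*}
so the kernel is $e^{-t}\in L^1_t$ and Young's inequality in $t$ gives $\|\widehat v\|_{L^{4/3}_TL^1}\lesssim \|v_0\|_{H^m}+\|\wwhat{(v\cdot\nabla)v}\|_{L^{4/3}_TL^1}+\|\wwhat{(B\cdot\nabla)B}\|_{L^{4/3}_TL^1}+\|\wwhat{\partial_1 B}\|_{L^{4/3}_TL^1}$, where the last term is $\lesssim\|\wwhat{\nabla B}_2\|_{L^{4/3}_TL^1}$ by $\operatorname{div}B=0$, and $(B\cdot\nabla)B=B_1\partial_1B+B_2\partial_2B$ produces the $\|\widehat B_2\|_{L^2_TL^1}$ factor. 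The key point is that the linear $\partial_1B$ coupling then appears as a \emph{solution} norm on the right, not as an ungateable $B_0$ data term; you should replace your eigen-decomposition of $\widehat v_1$ by this direct Duhamel argument.
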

We provide a proof of this proposition in Appendix.

\begin{proposition}\label{prop_vderiv}
	Let $m \geq 4$. Assume that $(v,B)\in C([0,\infty);H^m(\bbR^2))$ is a classical solution to \eqref{mhd_eq}. Then there holds
	\begin{equation*}
\|\widehat{\partial_1 v}\|_{L_T^1 L^1} \lesssim E(0)+G(T)^2 +G(T)\left( \|\widehat{v}\|_{L_T^{\frac{4}{3}} L^1} +  \|\widehat{\partial_1 v}\|_{L_T^1 L^1} + \|\widehat{\partial_1 B}_2\|_{L_T^1 L^1} + \|\widehat{B}_2\|_{L_T^2 L^1}\right)
\end{equation*}
for all $T \geq 0$.
\end{proposition}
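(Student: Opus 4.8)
The plan is to estimate $\widehat{\partial_1 v}=(i\xi_1\widehat v_1,i\xi_1\widehat v_2)$ by applying Duhamel's formula \eqref{df_u} to $\mathbf u=(v,B)$ and projecting onto the $e_1$ and $e_2$ slots. For the $e_2$ (resp.\ $e_4$) slot one uses \eqref{decom_est_1}--\eqref{decom_est_2} directly, and for the $e_1,e_3$ slots their obvious analogs for the first velocity--magnetic block $\{\mathbf a_\pm^1,\mathbf b_\pm^1\}$, which has exactly the same structure as the second block since $J$ is block diagonal. This writes $\widehat v_1$ (and similarly $\widehat v_2$) as a linear/initial-data term $e^{-\lambda_+t}\widehat{v}_{0,1}+(e^{-\lambda_-t}-e^{-\lambda_+t})\langle\widehat{\mathbf u}_0,\mathbf a_-^1\rangle\langle\mathbf b_-^1,e_1\rangle$ plus two Duhamel integrals against the nonlinearities $(v\cdot\nabla)\mathbf u$ and $(B\cdot\nabla)\widetilde{\mathbf u}$ (composed with the block Leray projector $\mathbf P(\xi)$), each carrying the same $e^{-\lambda_+}$/\,``difference'' splitting. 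I then decompose $\mathbb R^2=\Omega_1\cup\Omega_2\cup\Omega_3$ as in \eqref{domain}.

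On $\Omega_1\cup\Omega_2$ all kernels decay exponentially --- $|e^{-\lambda_+t}|\le e^{-t/2}$, $|e^{-\lambda_-t}|\le e^{-ct}$ on $\Omega_2$, and the difference term is $\le Ce^{-t/4}|\cdot|$ by \eqref{Omg_1}--\eqref{Omg_2} and their $e_1/e_3$ analogs --- so Young's inequality in $t$ (using $\|\mathbf P(\xi)\|\le 1$) bounds the corresponding part of $\|\widehat{\partial_1v}\|_{L^1_TL^1}$ by $\int_0^T(\|\xi_1\wwhat{\mathrm{NL}}(\tau)\|_{L^1_\xi}+\|\wwhat{\mathrm{NL}}(\tau)\|_{L^1_\xi})\,d\tau$. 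On $\Omega_3$ I use the sharp anisotropic bounds \eqref{Omg_4}--\eqref{Omg_3} (and $e_1/e_3$ analogs): the kernel $e^{-\xi_1^2(t-\tau)}$ has $\|e^{-\xi_1^2\cdot}\|_{L^1_t}=\xi_1^{-2}$, and the two $\xi_1$'s on the velocity slot and one on the magnetic slot, plus one more from $\partial_1$, exactly compensate this $\xi_1^{-2}$ --- with the extra help of $|\xi_1\xi_2|/|\xi|^2\le\tfrac12$ and $|\xi_1|\le\tfrac14$ when the Leray projector's $\xi^\perp/|\xi|^2$ multiplier is kept --- again leaving $L^1_\xi$-norms of the nonlinearities with no residual $\xi_1$-singularity. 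Applying the same computations to $\mathbf u_0$ together with $\|\widehat g\|_{L^1}\lesssim\|g\|_{H^{1+}}\lesssim\|g\|_{H^m}$ and the interpolation \eqref{intp_2}, the linear contribution is $\lesssim E(0)$.

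It remains to estimate, for $\mathrm{NL}\in\{(v\cdot\nabla)v,(v\cdot\nabla)B,(B\cdot\nabla)B,(B\cdot\nabla)v\}$ in the various slots, the time integrals of $\|\xi_1\wwhat{\mathrm{NL}}\|_{L^1_\xi}$ and $\|\wwhat{\mathrm{NL}}\|_{L^1_\xi}$. I expand $(w\cdot\nabla)z=w_1\partial_1 z+w_2\partial_2 z$ and systematically use $\mathrm{div}\,v=\mathrm{div}\,B=0$ to trade $\partial_2$ for $\partial_1$: the identities $\partial_2v_2=-\partial_1v_1$, $\partial_2B_2=-\partial_1B_1$, the rewrites $w_2\partial_2z=\partial_2(w_2z)+z\,\partial_1v_1$ (resp.\ $z\,\partial_1B_1$) when $w=v$ (resp.\ $w=B$), and $\partial_2B_1=\partial_1B_2-\omega_B$ with $\omega_B=\mathrm{curl}\,B$; and, where the Leray projector appears, the curl reformulation $(P\,\wwhat{(w\cdot\nabla)z})_k=\tfrac{i\xi^\perp_k}{|\xi|^2}\wwhat{\nabla^\perp\!\cdot((w\cdot\nabla)z)}$ with $\nabla^\perp\!\cdot((w\cdot\nabla)z)$ reorganized via $(w\cdot\nabla)\omega=\nabla\cdot(w\omega)$. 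Each resulting cubic is split by $\|\wwhat{fg}\|_{L^1}\le\|\widehat f\|_{L^1}\|\widehat g\|_{L^1}$, distributing powers of $\xi_1,\xi_2$ (using $|\xi|\le|\xi_1|+|\xi_2|$ under convolution to convert them into derivatives) so that one factor carries a $\partial_1$-derivative or a $B_2$ and is controlled by $G(T)$ --- in $L^\infty_T$ via $\|v\|_{H^m},\|B\|_{H^m}$ and $\|\widehat g\|_{L^1}\lesssim\|g\|_{H^{1+}}$, in $L^2_T$ via $\|\partial_1B\|_{H^{m-1}}$, or in $L^4_T$ by interpolating these --- while the other factor is one of $\|\widehat v\|_{L^1_\xi}$, $\|\widehat{\partial_1v}\|_{L^1_\xi}$, $\|\widehat{\partial_1 B}_2\|_{L^1_\xi}$, $\|\widehat B_2\|_{L^1_\xi}$. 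Hölder in time with the pairings $L^1_T=L^\infty_T\!\cdot L^1_T=L^2_T\!\cdot L^2_T=L^4_T\!\cdot L^{4/3}_T$ then produces precisely $E(0)+G(T)^2+G(T)\big(\|\widehat v\|_{L^{4/3}_TL^1}+\|\widehat{\partial_1v}\|_{L^1_TL^1}+\|\widehat{\partial_1 B}_2\|_{L^1_TL^1}+\|\widehat B_2\|_{L^2_TL^1}\big)$.

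The main obstacle is this last bookkeeping: routing every cubic into those four quantities with the correct time-Lebesgue exponent. The genuinely delicate terms are those where a $\partial_2$-derivative lands on $B_1$ (which is not controlled by $\partial_1B$ and $B_2$ in physical space), and those where, after the Leray projector is expressed through $\omega_B$, one is left with products such as $B_1\partial_1\omega_B$ or $B_2\partial_2\omega_B$ that naively give only $\|B\|_{H^m}\|\partial_1B\|_{H^{m-1}}$, which is not $L^1_T$-integrable. Resolving them requires combining the stream-function relations $\widehat{\omega_B}=i|\xi|^2\widehat B_2/\xi_1$, $\widehat B_1=-\xi_2\widehat B_2/\xi_1$ with the frequency localization ($|\xi_1|\le\tfrac14$ and $|\xi_1\xi_2|/|\xi|^2\le\tfrac12$ on $\Omega_3$, harmless factors on $\Omega_1\cup\Omega_2$) so that the extra $\xi_1$-smallness is paid against the magnetic quantities, and choosing the $L^2_T$ pairing (yielding $G(T)\|\widehat B_2\|_{L^2_TL^1}$) rather than an $L^1_T$ one where needed; likewise, the slowly-decaying $\Omega_3$ kernel forces certain factors into $L^4_T$, which is why $\|\widehat v\|_{L^{4/3}_TL^1}$ and not $\|\widehat v\|_{L^1_TL^1}$ appears. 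The self-referential term $G(T)\|\widehat{\partial_1v}\|_{L^1_TL^1}$, arising from cubics of the form $v_j\partial_1v_k$ or $B_1\partial_1v_1$, is absorbed in the continuity argument of Section~\ref{sec_thm} once $G(T)$ is small.
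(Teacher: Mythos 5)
Your plan follows the paper's broad skeleton --- Duhamel's formula \eqref{df_u}, the anisotropy decompositions \eqref{decom_est_1}--\eqref{decom_est_2}, and the strip decomposition of Lemma~\ref{lem_note} --- but it misses the opening move that does most of the simplifying work. From $\operatorname{div}v=0$ one has the pointwise identity $|\widehat{\partial_1 v}|=|\widehat{\nabla v_2}|=|\xi|\,|\widehat{v}_2|$, and the paper uses this to collapse the whole estimate onto the single slot $\langle\cdot,e_2\rangle$ of the $\mathbf{a}^2_{\pm}$ block. You instead propose to project onto $e_1$ and $e_2$ (and their $j=1,j=2$ blocks) separately, which would require building block-1 analogues of Lemma~\ref{lem_note} and handling the Leray-projected slot $(P\cdot)_1$, whose $\xi_2/|\xi|^2$ multiplier is structurally different from the $\xi_1/|\xi|^2$ in $(P\cdot)_2$ that the paper combines with \eqref{Omg_4} to harvest $\xi_1$-powers on $\Omega_3$. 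That route may well be made to work (the outer $\partial_1$ recovers the lost $\xi_1$-factor), but it is noticeably heavier and is not what the paper does.

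The more serious problem is that you explicitly defer what is in fact the content of the proposition. You write that ``the main obstacle is this last bookkeeping,'' list the delicate cubics ($\partial_2$ landing on $B_1$, products like $B_1\partial_1\omega_B$, etc.), and sketch that one should ``combine stream-function relations with frequency localization'' and ``choose the $L^2_T$ pairing where needed'' --- but you do not actually close any of them. In the paper, the self-referential term $G(T)\,\|\widehat{\partial_1 v}\|_{L^1_T L^1}$ comes specifically from $\widehat{B_1\partial_1^2 v}$ inside $(B\cdot\nabla)\partial_1\mathbf{u}$, closed via the interpolation $\|\widehat{\partial_1^2 v}\|_{L^1}\lesssim\|\widehat{\partial_1 v}\|_{L^1}^{1/2}\|v\|_{H^4}^{1/2}$ together with the $\operatorname{div}B=0$ variant $\|\widehat{\partial_1^2 B_1}\|_{L^1}\lesssim\|\widehat{\partial_1 B}_2\|_{L^1}^{1/2}\|\partial_1 B\|_{H^3}^{1/2}$; the term $G(T)\|\widehat{v}\|_{L^{4/3}_T L^1}$ comes from the $v_2\partial_2\mathbf{u}$ piece on $\Omega_3$ paired against $\|\widehat{\partial_2 B}\|_{L^4_T L^1}$; and the term $G(T)\|\widehat{B}_2\|_{L^2_T L^1}$ comes from $\widehat{B_2\partial_1\partial_2\mathbf{u}}$. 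None of these concrete mechanisms appear in your argument. As written the proposal is a reasonable roadmap, but the decisive nonlinear bookkeeping --- precisely the part whose correctness the proposition asserts --- is claimed rather than established.
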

\begin{proof}
	From \eqref{df_u} and $\big| \widehat{\partial_1 v} \big| \lesssim \big| \widehat{\nabla v}_2 \big| \lesssim \big| i\xi \sum_{\gamma=\pm} \langle \widehat{\mathbf{u}},\mathbf{a}_{\gamma}^2 \rangle \langle \mathbf{b}_{\gamma}^2, e_2 \rangle \big|$, we have
	\begin{equation*}
		\|\widehat{\partial_1 v}\|_{L_T^1 L^1} \lesssim \left\| \int_{\bbR^2} \big|\xi \sum_{\gamma=\pm} \langle \wwhat{\mathbf{u}},\mathbf{a}_{\gamma}^2 \rangle \langle \mathbf{b}_{\gamma}^2, e_2 \rangle \big| \,\mathrm{d}\xi \right\|_{L^1(0,T)} \leq  I(\bbR^2) + J(\bbR^2) + K(\bbR^2),
	\end{equation*}
	where for any simply connected smooth domain $Q\subseteq \bbR^2$ we set 
	\begin{align*}
	I(Q) &:= \left\| \int_{Q} \big|\sum_{\gamma=\pm} e^{-\lambda_{\gamma}t} \xi \langle  \widehat{\mathbf{u}}_0(\xi),\mathbf{a}_{\gamma}^2(\xi) \rangle \langle \mathbf{b}_{\gamma}^2, e_2 \rangle \big| \,\mathrm{d}\xi \right\|_{L^1(0,T)}, \\
		J(Q) &:= \left\| \int_{Q}\int_0^t \big| \sum_{\gamma=\pm} e^{-\lambda_{\gamma}(\xi)(t-\tau)} \xi \langle \mathbf{P}(\xi)\wwhat{\partial_1 (v\cdot \nabla)\mathbf{u}}(\xi,\tau),\mathbf{a}_{\gamma}^2(\xi) \rangle \langle \mathbf{b}_{\gamma}^2, e_2 \rangle \big| \, \mathrm{d}\tau\mathrm{d}\xi \right\|_{L^1(0,T)}, \\
		K(Q) &:= \left\| \int_{Q}\int_0^t \big| \sum_{\gamma=\pm} e^{-\lambda_{\gamma}(\xi)(t-\tau)} \xi \langle \mathbf{P}(\xi)  \wwhat{\partial_1 (B\cdot \nabla)\widetilde{\mathbf{u}}}(\xi,\tau),\mathbf{a}_{\gamma}^2(\xi) \rangle \langle \mathbf{b}_{\gamma}^2, e_2 \rangle \big| \, \mathrm{d}\tau\mathrm{d}\xi \right\|_{L^1(0,T)}.
	\end{align*}
When one tries to directly bound the above quantities, the key difficulty lies in that $|\mathbf{b}_{\gamma}^{j}|$ for any $(j,\gamma)\in\{1,2\}\times\{+,-\}$ is unbounded along the lines $\xi_1=\frac12$ and $\xi_1=-\frac12$.  Our trick is to leverage the \emph{anisotropy decomposition} suggested in \eqref{decom_est_1}, which is, \begin{equation*}\label{decomp}
    \sum_{\gamma=\pm} e^{-\lambda_{\gamma}t} \langle  \widehat{\mathbf{f}},\mathbf{a}_{\gamma}^2 \rangle \langle \mathbf{b}_{\gamma}^2, e_2 \rangle = (e^{-\lambda_{-}t} - e^{-\lambda_{+}t}) \langle  \widehat{\mathbf{f}},\mathbf{a}_{-}^2 \rangle \langle \mathbf{b}_{-}^2, e_2 \rangle + e^{-\lambda_{+}t} \langle \widehat{\mathbf{f}}, e_2 \rangle ,\quad \forall \widehat{\mathbf{f}} \in \bbC^4,
\end{equation*} so that we have  $I^{+}(Q)+I^{-}(Q)\lesssim \mathcal{I}^{+}(Q)+\mathcal{I}^{-}(Q),$
where $\mathcal{I}^{+}(Q)$ and $\mathcal{I}^{-}(Q)$ are defined by \begin{equation}\label{def_decompint}
    \begin{split}
    \mathcal{I}^{+}(Q) &= \left\| \int_{Q} \big|e^{-\lambda_{+}t} \xi \langle  \widehat{\mathbf{u}}_0(\xi),e_2 \rangle \big| \,\mathrm{d}\xi \right\|_{L^1(0,T)},\  \\
        \mathcal{I}^{-}(Q)&= \left\| \int_{Q} \big|(e^{-\lambda_{-}t}-e^{-\lambda_{+}t}) \xi \langle  \widehat{\mathbf{u}}_0(\xi),\mathbf{a}_{-}^2(\xi) \rangle \langle \mathbf{b}_{-}^2, e_2 \rangle \big| \,\mathrm{d}\xi \right\|_{L^1(0,T)}.
    \end{split}
\end{equation} By $\mathcal{J}^{\pm}(Q)$ and $\mathcal{K}^{\pm}(Q)$, respectively, we denote the quantities that correspond to $J(Q)$ and $J(Q)$ in light of the decomposition \eqref{decom_est_1}. This allows us to exploit the simple but powerful estimates in Lemma~\ref{lem_note}, which are valid even near the singular regions.

	We start by bounding $\mathcal{I}^{+}(\bbR^2)$. The relation $|e^{-\lambda_{+}(\xi)t}| \leq e^{-\frac t2}$ gives
	\begin{align*}
	\mathcal{I}^{+}(\bbR^2) \lesssim \left\| \int_{\bbR^2} \big|e^{-\lambda_{+}t} \langle \widehat{\partial_1 \mathbf{u}}_0, e_2 \rangle \big| \,\mathrm{d}\xi \right\|_{L^1(0,T)} \lesssim \| e^{-\frac t2} \|_{L^1(0,T)} \int_{\bbR^2} \big|\widehat{\partial_1 v}_2(0) \big| \,\mathrm{d}\xi \lesssim \| v_2(0) \|_{H^{m-1}}.
	\end{align*}
	To bound $\mathcal{I}^{-}(\bbR^2)$, we use \eqref{Omg_1} and \eqref{Omg_2} to obtain
	\begin{equation*}
		\mathcal{I}^{-}(\Omega_1 \cup \Omega_2) \lesssim \| e^{-\frac t2} \|_{L^1(0,T)} \int_{\bbR^2} \big| \widehat{\partial_1 \mathbf{u}}_0 \big| \,\mathrm{d}\xi \lesssim \| \mathbf{u}_0 \|_{H^{m-1}}.
	\end{equation*}
	Since \eqref{Omg_4} guarantees
the point-wise estimate \begin{equation*}
	\begin{aligned}
		\big|(e^{-\lambda_{-}t} - e^{-\lambda_{+}t}) \xi \langle  \widehat{\mathbf{u}}_0,\mathbf{a}_{-}^2 \rangle \langle \mathbf{b}_{-}^2, e_2 \rangle \big| &\lesssim e^{-\xi_1^2t} |\xi| \left( \big|\wwhat{ \partial_1^2 v}_2(0) \big| + \big| \wwhat{ \partial_1B}_2(0) \big| \right) \\
		&\lesssim e^{-\xi_1^2t} \left( |\xi_1|^3\big| \wwhat{ v}_0 \big| + |\xi_1|^2\big| \wwhat{ B}_0 \big| \right)
	\end{aligned}
	\end{equation*}
	for any $\xi \in \Omega_3$, we also have
	\begin{equation*}
	\mathcal{I}^{-}(\Omega_3) \lesssim\bigg\| \int_{\bbR^2} \big|\xi_1^2 e^{-\xi_1^2t} \big| \big(\big|\widehat{\partial_1 v}_0 \big| + \big| \wwhat{ B}_0 \big| \big) \,\mathrm{d}\xi \bigg\|_{L^1(0,T)} \\
		 \lesssim  \| v_0 \|_{H^{m-1}} + \| B_0 \|_{H^{m-2}}.
	\end{equation*}
	This leads to
	\begin{equation*}
		I(\bbR^2)\lesssim \sum_{\gamma=\pm}\mathcal{I}^{\gamma}(\bbR^2) \lesssim  \| \mathbf{u}_0 \|_{H^m}.
	\end{equation*}
	
	With the definition of $\mathbf{P}(\xi)$ and Young's convolution inequality, we have
	\begin{equation}\label{vderiv_est_1}
	\begin{split}
		\mathcal{J}^{+}(\bbR^2) &\lesssim\left\| \int_{\bbR^2}\int_0^t e^{-\frac {t-\tau}2} \big| \wwhat{\partial_1 (v\cdot \nabla)\mathbf{u}}(\xi,\tau) \big| \, \mathrm{d}\tau\mathrm{d}\xi \right\|_{L^1(0,T)} \\
		& \lesssim \left\| \int_{\bbR^2}\big| \wwhat{(\partial_1 v\cdot \nabla)\mathbf{u}}(\xi,\tau) \big| \, \mathrm{d}\xi \right\|_{L^1(0,T)} + \left\| \int_{\bbR^2} \big| \wwhat{(v\cdot \nabla)\partial_1 \mathbf{u}}(\xi,\tau) \big| \, \mathrm{d}\xi \right\|_{L^1(0,T)} \\
		&\lesssim  G(T) \int_0^T \int_{\bbR^2} \big| \widehat{\partial_1 v}(\xi,t) \big| \,\ud \xi \ud t + G(T)^2.
	\end{split}
	\end{equation}
	Similarly, with \eqref{Omg_1} and \eqref{Omg_2}, we have
	\begin{align*}
		&\mathcal{J}^{-}(\Omega_1\cup\Omega_2) \lesssim \left\| \int_{\bbR^2}\int_0^t e^{-\frac {t-\tau}2} \big| \wwhat{\partial_1 (v\cdot \nabla) \mathbf{u}}(\xi,\tau) \big| \, \mathrm{d}\tau\mathrm{d}\xi \right\|_{L^1(0,T)} \\
		&\hphantom{\qquad\qquad\qquad} \lesssim  G(T) \int_0^T \int_{\bbR^2} \big| \widehat{\partial_1 v}(\xi,t) \big| \,\ud \xi \ud t + G(T)^2.
	\end{align*}
	Using \eqref{Omg_4} with $|\xi_1| \leq 1$ and the definition of $\mathbf{P}(\xi)$ gives
	\begin{equation*}
	\begin{aligned}
		\mathcal{J}^{-}(\Omega_3) &\lesssim \left\| \int_{\bbR^2}\int_0^t \big| \xi_1^2 e^{-\xi_1^2(t-\tau)} \wwhat{(v\cdot \nabla) \mathbf{u}}(\xi,\tau) \big| \, \mathrm{d}\tau\mathrm{d}\xi \right\|_{L^1(0,T)} \\
		&\hphantom{\qquad\qquad}\lesssim \int_0^T\int_{\bbR^2} \big| \wwhat{v_1\partial_1 \mathbf{u}}(\xi,t) \big| \, \mathrm{d}\xi \mathrm{d}t +  \int_0^T\int_{\bbR^2} \big| \wwhat{v_2 \partial_2 \mathbf{u}}(\xi,t) \big| \, \mathrm{d}\xi \mathrm{d}t \\
		&\hphantom{\qquad\qquad}\lesssim G(T)^2 + \| \widehat{\partial_2 B} \|_{L^4(0,T;L^1)} \| \widehat{v}_2 \|_{L^{\frac{4}{3}}(0,T;L^1)}.
	\end{aligned}
	\end{equation*}
	Therefore we obtain
	\begin{equation*}
		J(\bbR^2)\lesssim \sum_{\gamma=\pm}\mathcal{J}^{\gamma}(\bbR^2) \lesssim G(T)^2 + G(T) \int_0^T \int_{\bbR^2} \big| \widehat{\partial_1 v}(\xi,t) \big| \,\ud \xi \ud t + G(T)\| \widehat{v} \|_{L^{\frac{4}{3}}(0,T;L^1)}.
	\end{equation*}
	Now we treat $\mathcal{K}^{\pm}(\bbR^2)$. We immediately see that
	\begin{equation}\label{eq_K}
		\mathcal{K}^{+}(\bbR^2) \lesssim   \left\| \int_{\bbR^2} \big| \wwhat{(\partial_1 B \cdot \nabla) \mathbf{u}}(\xi,t) \big| \,\mathrm{d}\xi \right\|_{L^1(0,T)} + \left\| \int_{\bbR^2} \big| \wwhat{(B \cdot \nabla) \partial_1 \mathbf{u}}(\xi,t) \big| \,\mathrm{d}\xi \right\|_{L^1(0,T)}.
	\end{equation}
	The first term in the above can be estimated as
	\begin{equation*}
		\left\| \int_{\bbR^2} \big| \wwhat{(\partial_1 B \cdot \nabla) \mathbf{u}}(\xi,t) \big| \,\mathrm{d}\xi \right\|_{L^1(0,T)} \leq CG(T)^2 + G(T) \| \partial_1 B_2 \|_{L^1(0,T;L^1)}.
	\end{equation*}
	To estimate the second term in \eqref{eq_K}, we observe that
	\begin{equation*}
 \begin{split}
		\left\| \int_{\bbR^2} \big| \wwhat{(B \cdot \nabla) \partial_1 \mathbf{u}}(\xi,t) \big| \,\mathrm{d}\xi \right\|_{L^1(0,T)}
		\leq &\|\wwhat{B_1 \partial_1^2 \mathbf{u}}\|_{L_T^1 L^1} + \|\wwhat{B_2 \partial_1\partial_2 \mathbf{u}}\|_{L_T^1 L^1} \\
		\leq &\|\wwhat{B_1 \partial_1^2 v}\|_{L_T^1 L^1}+ \|\wwhat{B_1 \partial_1^2 B}\|_{L_T^1 L^1} + C\| \widehat{B}_2 \|_{L_T^2 L^1} \| \partial_1 \bfu \|_{L_T^2 H^{m-1}}.
	\end{split}
 \end{equation*}
	From the interpolation inequalities $\|\widehat{\partial_1^2v}\|_{L^1}  \lesssim \| \widehat{\partial_1v} \|_{L^1}^{\frac 12} \| v \|_{H^4}^{\frac 12}$, $\|\widehat{\partial_1^2B_1}\|_{L^1} \lesssim \| \widehat{\partial_1B}_2 \|_{L^1}^{\frac 12} \| \partial_1 B \|_{H^3}^{\frac 12}$, and $\|\widehat{\partial_1^2B}_2\|_{L^1} \,\ud \xi \lesssim \| \widehat{\partial_1B}_2 \|_{L^1}^{\frac 12} \| \partial_1 B \|_{H^3}^{\frac 12}$, among which we used $\operatorname{div}B=0$, it follows that
	\begin{equation*}
	\begin{gathered}
		\left\| \int_{\bbR^2} \big| \wwhat{(B \cdot \nabla) \partial_1 \mathbf{u}}(\xi,t) \big| \,\mathrm{d}\xi \right\|_{L^1(0,T)} \lesssim G(T)( \| \widehat{\partial_1 v} \|_{L_T^1 L^1} +  \| \widehat{\partial_1B}_2 \|_{L_T^1 L^1} + \| \widehat{B}_2 \|_{L_T^2 L^1)}) + G(T)^2.
	\end{gathered}
	\end{equation*}
	It suffices to calculate $\mathcal{K}^{-}(\bbR^2).$ Since \eqref{Omg_1} and \eqref{Omg_2} yield the point-wise inequality
	\begin{equation*}
		\big| (e^{-\lambda_{-}(\xi)(t-\tau)} - e^{-\lambda_{+}(\xi)(t-\tau)}) \xi \langle \mathbf{P}(\xi)\wwhat{(B\cdot \nabla)\widetilde{\mathbf{u}}}, \mathbf{a}_{-}^2 \rangle \langle \mathbf{b}_{-}^2,e_2 \rangle \big| \lesssim e^{-\frac {t-\tau}2}\big| \wwhat{\partial_1(B\cdot \nabla) \bfu} \big|, \qquad \xi \in\Omega_1 \cup \Omega_2,
	\end{equation*}
	we similarly obtain
	\begin{align*}
		\mathcal{K}^{-}(\Omega_1\cup\Omega_2) &\lesssim 
	 \left\| \int_{\bbR^2}\int_0^t e^{-\frac {t-\tau}2} \big| \wwhat{\partial_1 (B \cdot \nabla) \mathbf{u}}(\xi,\tau) \big| \, \mathrm{d}\tau\mathrm{d}\xi \right\|_{L^1(0,T)} \\
		& \lesssim G(T)^2 + G(T) (\| \widehat{\partial_1 v} \|_{L^1(0,T;L^1)} +  \| \widehat{\partial_1B}_2 \|_{L^1(0,T;L^1)} + \| \widehat{B}_2 \|_{L^2(0,T;L^1)}).
	\end{align*}
	On the other hand, \eqref{Omg_3} gives
	\begin{equation*}
		\big| \xi \langle \mathbf{P}(\xi)\wwhat{(B\cdot \nabla)\widetilde{\mathbf{u}}}, \mathbf{a}_{-}^2 \rangle \langle \mathbf{b}_{-}^2,e_2 \rangle \big|\leq C \big| \wwhat{\partial_1^2(B\cdot \nabla) v} \big| + C \big| \wwhat{\partial_1^3(B\cdot \nabla) B} \big|, \qquad \xi \in\Omega_3.
	\end{equation*}
	We can deduce that
	\begin{equation*}
	\begin{split}
 \mathcal{K}^{-}(\Omega_3) &\lesssim \left\| \int_{\Omega_3} \big| \wwhat{(B\cdot \nabla) v} \big| \,\mathrm{d}\xi \right\|_{L^1(0,T)} + \left\| \int_{\Omega_3}\big| \wwhat{\partial_1(B\cdot \nabla) B} \big| \,\mathrm{d}\xi \right\|_{L^1(0,T)} \\
		&\lesssim G(T)^2 + G(T) (\| \widehat{\partial_1 v} \|_{L^1(0,T;L^1)} +  \| \widehat{\partial_1B}_2 \|_{L^1(0,T;L^1)} +  \| \widehat{B}_2 \|_{L^2(0,T;L^1)}).
	\end{split}
	\end{equation*}
	Thus, $\mathcal{K}^{\pm}(\bbR^2) \lesssim G(T)^2 +  G(T) (\| \widehat{\partial_1 v} \|_{L^1(0,T;L^1)} +  \| \widehat{\partial_1B}_2 \|_{L^1(0,T;L^1)} +  \| \widehat{B}_2 \|_{L^2(0,T;L^1)})$. Collecting all the above estimates, we complete the proof. 	
\end{proof}

\begin{proposition}\label{prop_Bderiv}
	Let $m > 3$ be an integer. Assume that $(v,B)\in C([0,\infty);H^m(\bbR^2))$ is a classical solution to \eqref{mhd_eq}. Then there exists a constant $C>0$ such that
	\begin{equation*}
	\begin{gathered}
		\|\widehat{\nabla B}_2\|_{L_T^{\frac43}L^1} \lesssim E(0) + G(T)^2 +  \|\frac{\big|\wwhat{B}_0 \big|}{\sqrt{|\xi_1|}}\|_{L^1} + G(T)\left(\| \wwhat{\partial_1 v} \|_{L_T^1 L^1}
		+ \| \frac {|\widehat{v}_2|}{\sqrt{|\xi_1|}} \|_{L_T^{\frac 43}L^1} +  \| \frac {|\widehat{v}_2|}{\sqrt{|\xi_1|}} \|_{L_T^2 L^2_{\xi_1}L^1_{\xi_2}}\right)
	\end{gathered}
	\end{equation*}
	for all $T \geq 0$.
\end{proposition}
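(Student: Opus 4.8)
The plan is to run the same Duhamel-plus-anisotropic-decomposition scheme as in Proposition~\ref{prop_vderiv}, this time reading off $\nabla B_2$ from the representation \eqref{df_u}. First I would use $\operatorname{div} B = 0$ to reduce: since $\widehat{\partial_2 B_2} = -\widehat{\partial_1 B_1}$, one has pointwise $|\widehat{\nabla B}_2| = |\widehat{\partial_1 B}| = |\xi_1|\,\big(|\langle\widehat{\mathbf u}, e_3\rangle|^2 + |\langle\widehat{\mathbf u}, e_4\rangle|^2\big)^{1/2}$, so it suffices to bound $|\xi_1|\,\langle\widehat{\mathbf u}, e_3\rangle$ and $|\xi_1|\,\langle\widehat{\mathbf u}, e_4\rangle$ in $L_T^{\frac 43} L^1$. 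Because the symbol $J$ in \eqref{lin_system}, as well as $\lambda_\pm$, $\mathbf a_\pm^j$, $\mathbf b_\pm^j$, are invariant under exchanging the index pairs $(1,3) \leftrightarrow (2,4)$, the $e_3$-piece is handled verbatim like the $e_4$-piece after relabelling $v_1 \leftrightarrow v_2$, $B_1 \leftrightarrow B_2$, using the obvious $(1,3)$-analogues of the decomposition \eqref{decom_est_1}--\eqref{decom_est_2} and of Lemma~\ref{lem_note}; so I would only write out the $e_4$-piece. For it, project \eqref{df_u} onto $e_4$ via $\langle\widehat{\mathbf u}, e_4\rangle = \sum_{\gamma=\pm}\langle\widehat{\mathbf u}, \mathbf a_\gamma^2\rangle\langle\mathbf b_\gamma^2, e_4\rangle$, multiply by $|\xi_1|$, and split the right-hand side into a data term $I$, a $(v\cdot\nabla)\mathbf u$ term $J$, and a $(B\cdot\nabla)\widetilde{\mathbf u}$ term $K$, exactly as in Proposition~\ref{prop_vderiv}. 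To each I apply the anisotropy decomposition \eqref{decom_est_2}, so that $I,J,K$ each become a $\lambda_+$-part --- which collapses to $e^{-\lambda_+ t}\langle\cdot, e_4\rangle$, a scalar with no singular factor --- plus a difference part $(e^{-\lambda_- t} - e^{-\lambda_+ t})\langle\cdot, \mathbf a_-^2\rangle\langle\mathbf b_-^2, e_4\rangle$ controlled near and away from $\xi_1 = \pm\tfrac 12$ by Lemma~\ref{lem_note}.

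On $\Omega_1 \cup \Omega_2$ the factor $e^{-t/4}$ from \eqref{Omg_1}--\eqref{Omg_2}, together with Young's convolution inequality in $\tau$, disposes of all three terms just as in Proposition~\ref{prop_vderiv}; the only extra bookkeeping is the factor $|\xi_1|$, which on these strips is absorbed as one further spatial derivative already present in $E(0)$ and $G(T)$. On $\Omega_3$ I would bound the difference part pointwise by \eqref{Omg_3}, i.e.\ by $|\xi_1|\,e^{-\xi_1^2 t}\,(|\xi_1 f_2| + |f_4|)$ with $f = \widehat{\mathbf u}_0$ for $I$, and $f = \mathbf P(\xi)\wwhat{(v\cdot\nabla)\mathbf u}$, $f = \mathbf P(\xi)\wwhat{(B\cdot\nabla)\widetilde{\mathbf u}}$ for $J,K$. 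For $J$ and $K$, using $\operatorname{div} v = \operatorname{div} B = 0$ rewrites the relevant components of $f$ as products of the form $(\partial_1 v)(\cdot)$, $v_1\partial_1(\cdot)$, $v_2\partial_2(\cdot)$, $(\partial_1 B)(\cdot)$, $B_2\partial_2(\cdot)$; then Young's inequality in $\tau$ against the heat factor, Hölder in time, Young's convolution inequality in $\xi$, and the interpolation inequalities \eqref{intp_1}--\eqref{intp_2} bring everything down to $G(T)^2$ plus $G(T)$ times $\|\widehat{\partial_1 v}\|_{L_T^1 L^1}$, $\big\| |\widehat v_2|/\sqrt{|\xi_1|} \big\|_{L_T^{\frac 43} L^1}$ and $\big\| |\widehat v_2|/\sqrt{|\xi_1|} \big\|_{L_T^2 L^2_{\xi_1} L^1_{\xi_2}}$, the $v_2$- and $B_2$-contributions being fed, further down the bootstrap, into Proposition~\ref{prop_v} and the companion estimate for $B_2$. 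For $I$ on $\Omega_3$, the term $|\xi_1|\,e^{-\xi_1^2 t}|\xi_1 f_2| = |\xi_1|^2 e^{-\xi_1^2 t}|\widehat v_2(0)|$ already decays fast enough to close against $E(0)$, whereas for $|\xi_1|\,e^{-\xi_1^2 t}|f_4| = |\xi_1|\,e^{-\xi_1^2 t}|\widehat B_2(0)|$ I write $|\widehat B_2(0)| = \sqrt{|\xi_1|}\cdot|\widehat B_0|/\sqrt{|\xi_1|}$, absorb $|\xi_1|^{3/2}$ into the heat factor, integrate in $\xi_1$ only, and am left with a time-decaying prefactor times $\big\| |\widehat B_0|/\sqrt{|\xi_1|} \big\|_{L^1}$, which is finite since $(v_0, B_0) \in X^m$.

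The step I expect to be the main obstacle is extracting enough time decay on $\Omega_3$ to make the outer $L_T^{\frac 43}$ norm finite uniformly in $T$. The component $\partial_1 B \sim \nabla B_2$ decays only at the critical rate $(1+t)^{-3/4}$ --- the sharp rate in \eqref{decay_rmk} and in Lemma~\ref{lem_decay2} for $j=1$ --- which is exactly borderline for $L^{\frac 43}$ in time; a crude estimate via $\| |\xi_1|\,e^{-\xi_1^2 t} \|_{L^\infty_{\xi_1}} \sim t^{-1/2}$ or $\| |\xi_1|\,e^{-\xi_1^2 t} \|_{L^2_{\xi_1}} \sim t^{-3/4}$ is not integrable to the power $\tfrac 43$. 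The remedy --- and the reason the $\sqrt{|\xi_1|}$-weighted anisotropic norms of $X^m$ on the data, and of $\widehat v_2$ via Proposition~\ref{prop_v} on the nonlinearity, are precisely the right quantities --- is to always carry one extra $\sqrt{|\xi_1|}$ into the heat factor, upgrading the decay to $\| |\xi_1|^{3/2} e^{-\xi_1^2 t} \|_{L^2_{\xi_1}} \sim t^{-1}$, and to integrate in $\xi_1$ first while keeping $L^1$ in $\xi_2$, which is why the mixed norm $L_T^2 L^2_{\xi_1} L^1_{\xi_2}$ occurs on the right-hand side. Checking that every term routes so that only the already-controlled quantities $\|\widehat{\partial_1 v}\|$, $\| |\widehat v_2|/\sqrt{|\xi_1|} \|$ and $\|\widehat B_2\|$ reappear, each multiplied by the small factor $G(T)$, is the delicate bookkeeping, but it follows the template of Propositions~\ref{prop_vderiv} and \ref{prop_v}.
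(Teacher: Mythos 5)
Your overall scheme is sound and matches the paper's: project \eqref{df_u} onto the $\mathbf a_\pm^2,\mathbf b_\pm^2$ eigenvector family, apply the anisotropy decomposition \eqref{decom_est_2}, split into strips $\Omega_1,\Omega_2,\Omega_3$ via Lemma~\ref{lem_note}, and feed the borderline $\Omega_3$ terms into the $\sqrt{|\xi_1|}$-weighted norms of $X^m$ (for data) and of $\widehat v_2$ via Proposition~\ref{prop_v} (for the nonlinearity). Your diagnosis of the critical $(1+t)^{-3/4}$ decay as borderline for $L_T^{4/3}$, and the remedy of always keeping $|\xi_1|^{3/2}e^{-\xi_1^2 t}$ intact and applying Minkowski in $\xi$ first, is exactly the right mechanism.

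However, your opening reduction is genuinely different from the paper's, and it is not quite as clean as you present it. The paper keeps $\widehat{\nabla B}_2 = -i\xi\,\widehat{B}_2 = -i\xi\sum_\gamma\langle\widehat{\mathbf u},\mathbf a_\gamma^2\rangle\langle\mathbf b_\gamma^2,e_4\rangle$ and works only with the $j=2$ family; $\operatorname{div}B=0$ is invoked \emph{inside} the $\Omega_3$ estimate (see \eqref{B2_initial}) to convert $|\xi|\,|\widehat{B}_2|$ into $|\xi_1|\,|\widehat{B}_0|$. You instead use $\operatorname{div}B=0$ \emph{upfront} to write $|\widehat{\nabla B}_2| = |\xi_1|\,|\widehat{B}|$ and then bound $|\xi_1|\widehat{B}_1$ and $|\xi_1|\widehat{B}_2$ separately, requiring the $(1,3)$-analogues of \eqref{decom_est_1}--\eqref{decom_est_2} and of Lemma~\ref{lem_note}. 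These analogues do hold (the symbol $J$ is genuinely invariant under the index permutation $(12)(34)$), and the extra bookkeeping is manageable, but the paper never needs to write them down. More importantly, your phrase ``handled verbatim after relabelling $v_1\leftrightarrow v_2$, $B_1\leftrightarrow B_2$'' would, taken literally, produce a right-hand side containing $\|\,|\widehat v_1|/\sqrt{|\xi_1|}\,\|$ instead of $\|\,|\widehat v_2|/\sqrt{|\xi_1|}\,\|$, and that is not a controlled quantity. The nonlinear term $(v\cdot\nabla) = v_1\partial_1 + v_2\partial_2$ is \emph{not} invariant under $v_1\leftrightarrow v_2$; the dangerous $v_2\partial_2$ direction is the same whichever component of $\mathbf u$ you project onto, and that is the actual reason the $e_3$-piece closes onto $\|\,|\widehat v_2|/\sqrt{|\xi_1|}\,\|$. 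You must carry out the $e_3$-piece estimate directly with the $(1,3)$-analogue of \eqref{Omg_3} applied to $f = \mathbf P(\xi)\wwhat{(v\cdot\nabla)\mathbf u}$ (so $f_1\sim\wwhat{(v\cdot\nabla)v_1}$, $f_3\sim\wwhat{(v\cdot\nabla)B_1}$) rather than relabel the $e_4$-piece; once done, the identical $v_2\partial_2$ substructure reproduces the stated right-hand side, but this needs to be said. In short: correct strategy, genuine but harmless detour through the $j=1$ machinery, and one symmetry claim that should be stated more carefully lest the reader believe the nonlinearity is being relabelled too.
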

\begin{proof}
    From \eqref{df_u} and $\widehat{\nabla B}_2 = -i\xi \sum_{\gamma=\pm} \langle  \widehat{\mathbf{u}},\mathbf{a}_{\gamma}^2 \rangle \langle \mathbf{b}_{\gamma}^2, e_4 \rangle$,
	we have
	\begin{equation*}
		\left\| \int_{\bbR^2} \big|\xi \sum_{\gamma=\pm} \langle \wwhat{ \mathbf{u}},\mathbf{a}_{\gamma}^2 \rangle \langle \mathbf{b}_{\gamma}^2, e_4 \rangle \big| \,\mathrm{d}\xi \right\|_{L^{\frac{4}{3}}(0,T)} \leq I(\bbR^2) + J(\bbR^2) + K(\bbR^2),
	\end{equation*}
	where for any simply connected smooth domain $Q\subseteq \bbR^2$ we set 
	\begin{align*}
    	I(Q) &:= \left\| \int_{Q} \big| \sum_{\gamma=\pm} e^{-\lambda_{\gamma}(\xi)t} \xi \langle  \widehat{\mathbf{u}}_0(\xi),\mathbf{a}_{\gamma}^2(\xi) \rangle \langle \mathbf{b}_{\gamma}^2, e_4 \rangle \big| \,\mathrm{d}\xi \right\|_{L^{\frac{4}{3}}(0,T)}, \\
		J(Q) &:= \left\| \int_{Q} \int_0^t \big| \sum_{\gamma=\pm} e^{-\lambda_{\gamma}(\xi)(t-\tau)} \xi \langle \mathbf{P}(\xi)\wwhat{(v\cdot \nabla)\mathbf{u}}(\xi,\tau),\mathbf{a}_{\gamma}^2(\xi) \rangle \langle \mathbf{b}_{\gamma}^2, e_4 \rangle \big| \, \mathrm{d}\tau \mathrm{d}\xi \right\|_{L^{\frac{4}{3}}(0,T)}, \\
		K(Q) &:= \left\| \int_{Q}\int_0^t \big| \sum_{\gamma=\pm} e^{-\lambda_{\gamma}(\xi)(t-\tau)} \xi \langle \mathbf{P}(\xi)  \wwhat{(B\cdot \nabla)\widetilde{\mathbf{u}}}(\xi,\tau),\mathbf{a}_{\gamma}^2(\xi) \rangle \langle \mathbf{b}_{\gamma}^2, e_4 \rangle \big| \, \mathrm{d}\tau\mathrm{d}\xi \right\|_{L^{\frac{4}{3}}(0,T)}.
	\end{align*}
	As we leveraged the anisotropy decomposition \eqref{decom_est_1} in proving Proposition~\ref{prop_B2}, we analogously recall the decomposition \eqref{decom_est_2}, that is,
 \begin{equation*}
     \sum_{\gamma=\pm} e^{-\lambda_{\gamma}t} \langle  \widehat{\mathbf{f}},\mathbf{a}_{\gamma}^2 \rangle \langle \mathbf{b}_{\gamma}^2, e_4 \rangle = (e^{-\lambda_{-}t} - e^{-\lambda_{+}t}) \langle  \widehat{\mathbf{f}},\mathbf{a}_{-}^2 \rangle \langle \mathbf{b}_{-}^2, e_4 \rangle + e^{-\lambda_{+}t} \langle \widehat{\mathbf{f}}, e_4 \rangle, \quad \forall \mathbf{f}\in\bbC^4.
 \end{equation*}
This would imply $I(Q) \lesssim \sum_{\gamma=\pm}\mathcal{I}^{\gamma}(Q) $, where $\mathcal{I}^{+}(Q)$ and $\mathcal{I}^{-}(Q)$ are defined as
\begin{equation*}
    \begin{split}
        \mathcal{I}^{+}(Q)&= \left\| \int_{Q} \big|e^{-\lambda_{+}(\xi)t} \xi \langle  \widehat{\mathbf{u}}_0(\xi),\mathbf{e_4}\rangle \big| \,\mathrm{d}\xi \right\|_{L^{\frac{4}{3}}(0,T)} \\
        \mathcal{I}^{-}(Q)&=  \left\| \int_{Q} \big|(e^{-\lambda_{-}t}-e^{-\lambda_{+}t}) \xi \langle  \widehat{\mathbf{u}}_0(\xi),\mathbf{a}_{-}^2(\xi) \rangle \langle \mathbf{b}_{-}^2, e_4 \rangle \big| \,\mathrm{d}\xi \right\|_{L^{\frac{4}{3}}(0,T)}
    \end{split}
\end{equation*}
Then the quantities $\mathcal{J}^{\pm}(Q)$ and $\mathcal{K}^{\pm}(Q)$, respectively, are defined correspondingly to $J(Q)$ and $K(Q)$ in view of the above decomposition \eqref{decom_est_2}.
 
 We estimate $I(\bbR^2)$ similarly with the proof of Proposition~\ref{prop_vderiv}, using \eqref{decom_est_2}. It is clear by \eqref{Omg_1} that
	\begin{equation*}
	\mathcal{I}^{+}(\bbR^2) \lesssim \| e^{-\frac t2} \|_{L^{\frac{4}{3}}(0,T)} \int_{\bbR^2} \big|\widehat{\nabla B}_2(0) \big| \,\mathrm{d}\xi \lesssim \| B_2(0) \|_{H^m}.
	\end{equation*}
	From \eqref{Omg_1} and \eqref{Omg_2}, 
	\begin{equation*}
	\mathcal{I}^{-}(\Omega_1\cup\Omega_2) \lesssim \left\| \int_{\bbR^2} e^{-\frac t2} \big| \widehat{\nabla \mathbf{u}}_0 \big| \,\mathrm{d}\xi \right\|_{L^{\frac{4}{3}}(0,T)} \lesssim \| \mathbf{u}_0 \|_{H^m}.
	\end{equation*}
	Combining $e^{-\lambda_{+}t} \leq e^{-\lambda_{-}t} \leq e^{-\xi_1^2t} $ with
\eqref{B2_initial}, we see that 
	\begin{align*}
		\mathcal{I}^{-}(\Omega_3) \lesssim \bigg\| \int_{\bbR^2} \big|\xi_1^{\frac 32} e^{-\xi_1^2t} \big| \big(|\xi_1|^{\frac 12} \big|\widehat{v}_0 \big| + |\xi_1|^{-\frac 12} \big| \wwhat{ B}_0 \big| \big) \,\mathrm{d}\xi \bigg\|_{L^{\frac{4}{3}}(0,T)} \lesssim \| v_0 \|_{H^m} +  \int_{\bbR^2} \frac{\big|\wwhat{B}_0 \big|}{\sqrt{\xi_1}} \,\ud \xi.
	\end{align*}
	Thus, we obtain
	\begin{equation*}
		I(\bbR^2) \lesssim \sum_{\gamma=\pm} \mathcal{I}^{\gamma}(\bbR^2) \lesssim \| \mathbf{u}_0 \|_{H^m} +  \int_{\bbR^2} \frac{\big|\wwhat{B}_0 \big|}{\sqrt{\xi_1}} \,\ud \xi.
	\end{equation*}
	
	We can estimate $\mathcal{J}^{+}(\bbR^2)$ just as we did in \eqref{vderiv_est_1}. For any $m>3$, there holds
	\begin{align*}
	\mathcal{J}^{+}(\bbR^2) &\lesssim \left\| \int_{\bbR^2}\int_0^t e^{-\frac {t-\tau}2} \big| \wwhat{\partial_1 (v \cdot \nabla) \bfu} (\xi,\tau) \big| \, \mathrm{d}\tau\mathrm{d}\xi \right\|_{L^{\frac{4}{3}}(0,T)} \\
		&\lesssim \|\wwhat{(\partial_1v  \cdot \nabla) \mathbf{u}}\|_{L_T^1 L^1}+  \|\wwhat{(v  \cdot \nabla) \partial_1\mathbf{u}}\|_{L_T^1 L^1}\\
  &\lesssim G(T) \|\widehat{\partial_1 v}\|_{L_T^1 L^1} +  G(T)^2.
	\end{align*}
	Then we bound $\mathcal{J}^{-}(\Omega_1\cup\Omega_2)$ as
	\begin{align*}
		\mathcal{J}^{-}(\Omega_1\cup\Omega_2) &\lesssim \left\| \int_{\bbR^2}\int_0^t e^{-\frac {t-\tau}2} \big| \wwhat{\partial_1 (v\cdot \nabla)\mathbf{u}}(\xi,\tau) \big| \, \mathrm{d}\tau\mathrm{d}\xi \right\|_{L^{\frac{4}{3}}(0,T)} \\
        &\lesssim G(T) \int_0^T \int_{\bbR^2} \big| \widehat{\partial_1 v} \big| \,\ud \xi \ud t +  G(T)^2
	\end{align*}
 with the help of the anisotropic estimates \eqref{Omg_1} and \eqref{Omg_2}. To control $\mathcal{J}^{-}(\Omega_3)$, we simply use $(v \cdot \nabla) \bfu = v_1 \partial_1 \bfu + v_2 \partial_2 \bfu$ and then estimate separately the two integrals that correspond to $v_1 \partial_1 \bfu$ and $v_2 \partial_2 \bfu$. Let us call the integrals $\mathcal{J}_1^{-}(\Omega_3)$ and $\mathcal{J}_2^{-}(\Omega_3)$, respectively. By the estimate \eqref{Omg_3}, which is designed specifically for $\Omega_3$, and by the definition of $\mathbf{P}(\xi)$, we bound $\mathcal{J}_1^{-}(\Omega_3)$ as
	\begin{equation*}
	\begin{aligned}
		\mathcal{J}_1^{-}(\Omega_3) \lesssim \left\| \int_{\bbR^2}\int_0^t (t-\tau)^{-\frac 12} \big| e^{-\xi_1^2 \frac{t-\tau}2} \wwhat{v_1 \partial_1 \mathbf{u}}(\xi,\tau) \big| \, \mathrm{d}\tau\mathrm{d}\xi \right\|_{L^{\frac{4}{3}}(0,T)}.
	\end{aligned}
	\end{equation*}
	We extract additional algebraic temporal decay out of the anisotropic time decay $e^{-\xi_1^2 \frac{t-\tau}{2}}$ as
	\begin{align*}
		\int_{\bbR^2} \big| e^{-\xi_1^2 \frac{t-\tau}2} \wwhat{v_1 \partial_1 \mathbf{u}} \big| \, \mathrm{d}\xi &\leq (t-\tau)^{-\frac 12(1-\frac 1p)} \| \wwhat{v_1 \partial_1 \mathbf{u}} \|_{L^p_{\xi_1}L^1_{\xi_2}} \\
		&\leq (t-\tau)^{-\frac 12(1-\frac 1p)} \| \wwhat{v}_1 \|_{L^q_{\xi_1}L^1_{\xi_2}} \|  \wwhat{\partial_1 \mathbf{u}} \|_{L^2_{\xi_1}L^1_{\xi_2}} \\
		&\leq (t-\tau)^{-\frac 12(1-\frac 1p)} \| \wwhat{v}_1 \|_{L^1}^{\frac 2p}  \| \wwhat{v}_1 \|_{L^2_{\xi_1}L^1_{\xi_2}}^{1-\frac 2p}  \| \wwhat{\partial_1 \mathbf{u}} \|_{L^2_{\xi_1}L^1_{\xi_2}} ,
	\end{align*}
	where $1/p + 1 = 1/q + 1/2$ and $p \in (2,\infty)$. The above observation combined with Young's convolution inequality gives
	\begin{equation}\label{Bderiv_est_1}
	\begin{aligned}
		\mathcal{J}_1^{-}(\Omega_3) \lesssim \left\| \int_{\bbR^2}\int_0^t (t-\tau)^{-\frac 12} \big| e^{-\xi_1^2 \frac{t-\tau}2} \wwhat{v_1 \partial_1 \mathbf{u}}(\xi,\tau) \big| \, \mathrm{d}\tau\mathrm{d}\xi \right\|_{L^{\frac{4}{3}}(0,T)} \lesssim G(T)^2.
	\end{aligned}
	\end{equation}
	Similarly, the second term $\mathcal{J}_2^{-}(\Omega_3)$ is estimated as
	\begin{equation*}
	\begin{aligned}
		\mathcal{J}_2^{-}(\Omega_3)
		\lesssim \left\| \int_{\bbR^2}\int_0^t \big| \xi_1 e^{-\xi_1^2 (t-\tau)} \wwhat{v_2 \partial_2 \mathbf{u}}(\xi,\tau) \big| \, \mathrm{d}\tau\mathrm{d}\xi \right\|_{L^{\frac{4}{3}}(0,T)}.
	\end{aligned}
	\end{equation*}
During further estimation of $\mathcal{J}_2^{-}(\Omega_3)$, for simplicity, we may denote by $g$ the function $g:\bbR^2\to\bbR_{+}$ defined by \begin{equation}\label{def_g}
g(\xi)=\frac{1}{\sqrt{|\xi_1|}}.
\end{equation} Then we observe that
\begin{equation*}
		\big| \wwhat{v_2 \partial_2 \mathbf{u}}(\xi) \big| \leq \sqrt{|\xi_1|} \left((g|\widehat{v}_2|) * \big| \widehat{\partial_2 \bfu} \big| \right)(\xi)+ \left((g|\widehat{v}_2|) *  \frac{|\widehat{\partial_2 \bfu}|}{g}\right)(\xi).
	\end{equation*}
	Young's convolution inequality implies
	\begin{equation*}
	\begin{aligned}
		 \left\| \int_{\bbR^2}\int_0^t \big| \xi_1^{\frac 32} e^{-\xi_1^2 (t-\tau)} \left((g|\widehat{v}_2|)* \big| \widehat{\partial_2 \bfu}\big|\right) (\xi,\tau)|  \, \mathrm{d}\tau\mathrm{d}\xi \right\|_{L^{\frac{4}{3}}(0,T)} & \lesssim \left\| \| \frac {|\widehat{v}_2|}{\sqrt{|\xi_1|}}\|_{L^1} \| \widehat{\partial_2 \bfu} \|_{L^1} \right\|_{L^1(0,T)} \\
		& \lesssim  \| \frac {|\widehat{v}_2|}{\sqrt{|\xi_1|}}\|_{L_T^{\frac 43} L^1} G(T).
	\end{aligned}
	\end{equation*} 
	Since
	\begin{equation*}
	\begin{aligned}
		\int_{\bbR^2} \big| \xi_1 e^{-\xi_1^2 (t-\tau)} &\left((g|\widehat{v}_2|) *  \frac{|\widehat{\partial_2 \bfu}|}{g}\right)(\xi) \big| \, \mathrm{d}\xi \\ &\lesssim(t-\tau)^{-\frac 12(1-\frac 1p)} \| (g|\widehat{v}_2|) *  \frac{|\widehat{\partial_2 \bfu}|}{g} \|_{L^p_{\xi_1}L^1_{\xi_2}} \\
&\lesssim(t-\tau)^{-\frac 12(1-\frac 1p)} \| \frac {|\widehat{v}_2|}{\sqrt{|\xi_1|}} \|_{L^1}^{\frac 2p}  \| \frac {|\widehat{v}_2|}{\sqrt{|\xi_1|}} \|_{L^2_{\xi_1}L^1_{\xi_2}}^{1-\frac 2p}  \| \wwhat{\partial_1 \partial_2 \mathbf{u}} \|_{L^2_{\xi_1}L^1_{\xi_2}}^{\frac 12}  \| \wwhat{\partial_2 \mathbf{u}} \|_{L^2_{\xi_1}L^1_{\xi_2}}^{\frac 12},
	\end{aligned}
	\end{equation*}
	where $1/p + 1 = 1/q + 1/2$ and $p \in (2,\infty)$, we obtain
	\begin{equation*}
	\begin{aligned}
		& \left\| \int_{\bbR^2}\int_0^t \big| \xi_1 e^{-\xi_1^2 (t-\tau)} \frac {|\widehat{v}_2|}{\sqrt{|\xi_1|}} * \big| \sqrt{|\xi_1|} \widehat{\partial_2 \bfu} \big|(\xi,\tau) \big| \, \mathrm{d}\tau\mathrm{d}\xi \right\|_{L^{\frac{4}{3}}(0,T)}  \\
		&\hphantom{\qquad\qquad} \lesssim \left\| \int_0^t (t-\tau)^{-1+\frac 1{2p}} \| \frac {|\widehat{v}_2|}{\sqrt{|\xi_1|}} \|_{L^1}^{\frac 2p}  \| \frac {|\widehat{v}_2|}{\sqrt{|\xi_1|}} \|_{L^2_{\xi_1}L^1_{\xi_2}}^{1-\frac 2p}  \| \wwhat{\partial_1 \partial_2 \mathbf{u}} \|_{L^2_{\xi_1}L^1_{\xi_2}}^{\frac 12}  \| \wwhat{\partial_2 \mathbf{u}} \|_{L^2_{\xi_1}L^1_{\xi_2}}^{\frac 12} \,\ud \tau \right\|_{L^{\frac 43}(0,T)} \\
		&\hphantom{\qquad\qquad} \lesssim \left\| \| \frac {|\widehat{v}_2|}{\sqrt{|\xi_1|}} \|_{L^1}^{\frac 2p}  \| \frac {|\widehat{v}_2|}{\sqrt{|\xi_1|}} \|_{L^2_{\xi_1}L^1_{\xi_2}}^{1-\frac 2p}  \| \wwhat{\partial_1 \partial_2 \mathbf{u}} \|_{L^2_{\xi_1}L^1_{\xi_2}}^{\frac 12}  \| \wwhat{\partial_2 \mathbf{u}} \|_{L^2_{\xi_1}L^1_{\xi_2}}^{\frac 12} \right\|_{L^{\frac {4p}{3p + 2}}(0,T)} \\
		&\hphantom{\qquad\qquad} \lesssim \| \frac {|\widehat{v}_2|}{\sqrt{|\xi_1|}} \|_{L_T^{\frac 43} L^1}^{\frac 2p}  \| \frac {|\widehat{v}_2|}{\sqrt{|\xi_1|}} \|_{L^2(0,T;L^2_{\xi_1}L^1_{\xi_2})}^{1-\frac 2p}  G(T).
	\end{aligned}
	\end{equation*}
	Thus,
	\begin{equation*}
		\mathcal{J}^{\pm}(\bbR^2) \lesssim G(T)^2 + G(T)(\| \wwhat{v}_2 \|_{L^1(0,T;L^1)} + \| \frac {|\widehat{v}_2|}{\sqrt{|\xi_1|}} \|_{L_T^{\frac 43} L^1} +  \| \frac {|\widehat{v}_2|}{\sqrt{|\xi_1|}} \|_{L^2(0,T;L^2_{\xi_1}L^1_{\xi_2})}).
	\end{equation*}
It suffices to estimate $\mathcal{K}^{\pm}(\bbR^2).$ We see that
	\begin{align*}
		\mathcal{K}^{+}(\bbR^2) &\lesssim  \left\| \int_{\bbR^2}\int_0^t e^{-\frac {t-\tau}2} \big| \wwhat{\partial_1 (B \cdot \nabla) \widetilde{\bfu}} (\xi,\tau) \big| \, \mathrm{d}\tau\mathrm{d}\xi \right\|_{L^{\frac{4}{3}}(0,T)} \\
		&\lesssim  \left\| \int_{\bbR^2}\big| \wwhat{(\partial_1B \cdot \nabla)\bfu} \big| \, \mathrm{d}\xi \right\|_{L^{\frac 43}(0,T)} +  \left\| \int_{\bbR^2}\big| \wwhat{(B \cdot \nabla) \partial_1\bfu} \big| \, \mathrm{d}\xi \right\|_{L^{\frac 43}(0,T)} \\
		&\lesssim G(T)^2.
	\end{align*}
	Similarly, we have
%	\begin{align*}
%		&\left\| \int_{\bbR^2}\int_0^t \big| (e^{-\lambda_{-}(\xi)(t-\tau)} - e^{-\lambda_{+}(\xi)(t-\tau)}) \xi \langle \mathbf{P}(\xi)\wwhat{(B\cdot \nabla)v}(\xi,\tau), \mathbf{a}_{-}^2 \rangle \langle \mathbf{b}_{-}^2,e_4 \rangle \big| \, \mathrm{d}\tau\mathrm{d}\xi \right\|_{L^{\frac{4}{3}}(0,T)}.
%	\end{align*}
	from \eqref{Omg_1}, \eqref{Omg_2}, and the definition of $\bfP(\xi)$ that
	\begin{align*}
		\mathcal{K}^{-}(\Omega_1\cup\Omega_2) \lesssim \left\| \int_{\bbR^2}\int_0^t e^{-\frac {t-\tau}2} \big| \wwhat{\partial_1 (B\cdot \nabla)\widetilde{\bfu}}(\xi,\tau) \big| \, \mathrm{d}\tau\mathrm{d}\xi \right\|_{L^{\frac{4}{3}}(0,T)}
        \lesssim G(T)^2.
	\end{align*}
	It only remains to bound $\mathcal{K}^{-}(\Omega_3)$. We deduce from \eqref{Omg_3} that\begin{equation}\label{eq_Kdecomp}
	    \big| \xi \langle \mathbf{P}(\xi)\wwhat{(B\cdot \nabla)\widetilde{\bfu}}(\xi,\tau),e_4 \rangle \langle \mathbf{b}_{-}^2,e_4 \rangle \big| \lesssim e^{-\xi_1^2t}\big(\big|\wwhat{\partial_1^2(B \cdot \nabla)B}\big| + \big|\wwhat{\partial_1 (B \cdot \nabla)v}\big|\big).\end{equation} This gives rise to the decomposition inequality $\mathcal{K}^{-}(\Omega_3) \lesssim \mathcal{K}_1^{-}(\Omega_3)+\mathcal{K}_2^{-}(\Omega_3)$
 where $\mathcal{K}_1^{-}(\Omega_3)$ and $\mathcal{K}_2^{-}(\Omega_3)$ stand for the corresponding quantities to the first and the second in the right-hand side of \eqref{eq_Kdecomp}, respectively. Using $\partial_1((B \cdot \nabla) v) = \partial_1(\partial_1 (B_1 v) - \partial_1 B_1 v + B_2 \partial_2 v)$, we estimate $\mathcal{K}_{2}^{-}(\Omega_3)$ term by term. Observe that
	\begin{equation*}
	\begin{aligned}
		\left\| \int_{\Omega_3} \int_0^t \big| e^{-\xi_1^2(t-\tau)} \wwhat{\partial_1^2 (B _1v)} (\xi,\tau) \big| \, \mathrm{d}\tau \mathrm{d}\xi \right\|_{L^{\frac{4}{3}}(0,T)} &\lesssim \left\| \int_{\bbR^2}\int_0^t \big| \xi_1^2 e^{-\xi_1^2 \frac{t-\tau}2} \wwhat{B_1 v}(\xi,\tau) \big| \, \mathrm{d}\tau\mathrm{d}\xi \right\|_{L^{\frac{4}{3}}(0,T)} \\
		&\lesssim \left\| \int_{\bbR^2} \big| \wwhat{B_1 v}(\xi,t) \big| \, \mathrm{d}\xi \right\|_{L^{\frac{4}{3}}(0,T)} \\
		&\lesssim G(T)^2.
	\end{aligned}
	\end{equation*}
	Since
	\begin{equation*}
	\begin{aligned}
		&\left\| \int_{\Omega_3} \int_0^t \big| e^{-\xi_1^2(t-\tau)} \wwhat{\partial_1 (\partial_1B _1 v)} (\xi,\tau) \big| \, \mathrm{d}\tau \mathrm{d}\xi \right\|_{L^{\frac{4}{3}}(0,T)} \lesssim \left\| \int_{\bbR^2}\int_0^t (t-\tau)^{-\frac 12} \big| e^{-\xi_1^2 \frac{t-\tau}2} \wwhat{\partial_1 B_1 v}(\xi,\tau) \big| \, \mathrm{d}\tau\mathrm{d}\xi \right\|_{L^{\frac{4}{3}}(0,T)},
	\end{aligned}
	\end{equation*}
	recalling \eqref{Bderiv_est_1}, we obtain
	\begin{equation*}
	\begin{aligned}
		&\left\| \int_{\Omega_3} \int_0^t \big| e^{-\xi_1^2(t-\tau)} \wwhat{\partial_1 (\partial_1B _1 v)} (\xi,\tau) \big| \, \mathrm{d}\tau \mathrm{d}\xi \right\|_{L^{\frac{4}{3}}(0,T)} \lesssim G(T)^2.
	\end{aligned}
	\end{equation*}
	We can see
	\begin{equation*}
	\begin{aligned}
		\left\| \int_{\Omega_3} \int_0^t \big| e^{-\xi_1^2(t-\tau)} \wwhat{\partial_1 (B_2\partial_2v)} (\xi,\tau) \big| \, \mathrm{d}\tau \mathrm{d}\xi \right\|_{L^{\frac{4}{3}}(0,T)} \lesssim \left\| \int_{\bbR^2}\int_0^t (t-\tau)^{-\frac 12} \big| e^{-\xi_1^2 \frac{t-\tau}2} \wwhat{B_2 \partial_2 v}(\xi,\tau) \big| \, \mathrm{d}\tau\mathrm{d}\xi \right\|_{L^{\frac{4}{3}}(0,T)}.
	\end{aligned}
	\end{equation*}
	Note that
		\begin{align*}
		\int_{\bbR^2} \big| e^{-\xi_1^2 \frac{t-\tau}2} \wwhat{B_2 \partial_2 v} \big| \, \mathrm{d}\xi &\lesssim (t-\tau)^{-\frac 12(1-\frac 1p)} \| \wwhat{B_2 \partial_2 v} \|_{L^p_{\xi_1}L^1_{\xi_2}} \\
		&\lesssim (t-\tau)^{-\frac 12(1-\frac 1p)} \| \wwhat{B}_2 \|_{L^q_{\xi_1}L^1_{\xi_2}} \|  \wwhat{\partial_2 v} \|_{L^2_{\xi_1}L^1_{\xi_2}} \\
		&\lesssim (t-\tau)^{-\frac 12(1-\frac 1p)} \| \wwhat{B}_2 \|_{L^1}^{\frac 2p}  \| \wwhat{B}_2 \|_{L^2_{\xi_1}L^1_{\xi_2}}^{1-\frac 2p}  \| \wwhat{\partial_2 v} \|_{L^2_{\xi_1}L^1_{\xi_2}} \\
		&\lesssim (t-\tau)^{-\frac 12(1-\frac 1p)} \| \wwhat{B}_2 \|_{L^1}^{\frac 2p}  \| B_2 \|_{L^2}^{\frac 12-\frac 1p} \| \partial_1 B \|_{L^2}^{\frac 12-\frac 1p} \| \wwhat{\partial_2 v} \|_{L^2_{\xi_1}L^1_{\xi_2}},
	\end{align*}
	where $1/p + 1 = 1/q + 1/2$ and $p \in (2,\infty)$. Then, with Young's convolution inequality, we deduce
	\begin{equation*}
	\begin{aligned}
		\left\| \int_{\Omega_3} \int_0^t \big| e^{-\xi_1^2(t-\tau)} \wwhat{\partial_1 (B_2\partial_2v)} (\xi,\tau) \big| \, \mathrm{d}\tau \mathrm{d}\xi \right\|_{L^{\frac{4}{3}}(0,T)} \lesssim G(T)^2.
	\end{aligned}
	\end{equation*}
	On the other hand, we have
	\begin{equation*}
		\begin{split}
			\left\| \int_{\Omega_3} \int_0^t \big| e^{-\xi_1^2(t-\tau)} \wwhat{\partial_1^2 (B \cdot \nabla)B} (\xi,\tau) \big| \, \mathrm{d}\tau \mathrm{d}\xi \right\|_{L^{\frac{4}{3}}(0,T)} \lesssim \|\wwhat{B_1 \partial_1 B}\|_{L_T^{\frac43} L^1} +  \|\wwhat{B_2 \partial_2 B}\|_{L_T^{\frac43} L^1} \lesssim  G(T)^2.
		\end{split}
	\end{equation*}
	Thus, it follows
	\begin{equation*}
		\mathcal{K}^{\pm}(\bbR^2) \lesssim G(T)^2.
	\end{equation*}
	Combining all the previous estimates, we finish the proof.
\end{proof}

\begin{proposition}\label{prop_B2}
	Let $m > 2$. Assume that $(v,B)\in C([0,\infty);H^m(\bbR^2))$ is a classical solution to \eqref{mhd_eq}. Then there holds
	\begin{equation*}
	\begin{gathered}
		\|\widehat{B}_2\|_{L_T^2 L^1} \lesssim E(0) + \int_{\bbR^2} \frac 1{|\xi|} \big| \widehat{B_0} \big| \,\mathrm{d}\xi + G(T)^2
		+ G(T)(\|\wwhat{\partial_1 v}\|_{L_T^1 L^1} + \| \widehat{v} \|_{L_T^{\frac 43}L^1}),
	\end{gathered}
	\end{equation*}
	for all $T \geq 0$.
\end{proposition}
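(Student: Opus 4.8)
The plan is to run the same machinery as in Proposition~\ref{prop_vderiv} and Proposition~\ref{prop_Bderiv}, the point being that $\widehat{B}_2 = \langle\widehat{\mathbf{u}},e_4\rangle = \sum_{\gamma=\pm}\langle\widehat{\mathbf{u}},\mathbf{a}_\gamma^2\rangle\langle\mathbf{b}_\gamma^2,e_4\rangle$, since $\langle\mathbf{b}_\gamma^1,e_4\rangle=0$. Applying Duhamel's formula \eqref{df_u} and the triangle inequality in $L^2(0,T;L^1_\xi)$, I would split $\|\widehat{B}_2\|_{L^2_T L^1}\le I(\bbR^2)+J(\bbR^2)+K(\bbR^2)$, where $I$ collects the free evolution of the data $e^{-\lambda_\gamma t}\langle\widehat{\mathbf{u}}_0,\mathbf{a}_\gamma^2\rangle\langle\mathbf{b}_\gamma^2,e_4\rangle$ and $J,K$ are the Duhamel integrals of $\mathbf{P}(\xi)\wwhat{(v\cdot\nabla)\mathbf{u}}$ and $\mathbf{P}(\xi)\wwhat{(B\cdot\nabla)\widetilde{\mathbf{u}}}$ against $\langle\cdot,\mathbf{a}_\gamma^2\rangle\langle\mathbf{b}_\gamma^2,e_4\rangle$; the only structural difference from Proposition~\ref{prop_Bderiv} is that there is no factor of $\xi$ in front and the ambient temporal norm is $L^2_T$ rather than $L^{4/3}_T$. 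As usual the obstruction is the blow-up of $|\mathbf{b}_\gamma^2|$ along $\{\xi_1=\pm\tfrac12\}$, which I would remove via the anisotropy decomposition \eqref{decom_est_2}, writing each of $I,J,K$ as a sum $\mathcal{I}^{+}+\mathcal{I}^{-}$ (and likewise $\mathcal{J}^{\pm},\mathcal{K}^{\pm}$) corresponding to the two summands on the right of \eqref{decom_est_2}, and then invoking Lemma~\ref{lem_note}.

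For the ``$+$'' pieces, carrying $e^{-\lambda_{+}t}\langle\widehat{\mathbf{f}},e_4\rangle$, and for the ``$-$'' pieces restricted to $\Omega_1\cup\Omega_2$, where \eqref{Omg_1}--\eqref{Omg_2} supply a true exponential $e^{-t/4}$, the estimates are routine: the data part is $\lesssim\|e^{-t/4}\|_{L^2_t}\|\widehat{\mathbf{u}}_0\|_{L^1}\lesssim E(0)$, and for the nonlinear parts Young's convolution inequality with the $L^1_t$-kernel reduces the matter to bounding $\big\|\,\|\wwhat{(v\cdot\nabla)B_2}\|_{L^1_\xi}\big\|_{L^2_t}$ and $\big\|\,\|\wwhat{(B\cdot\nabla)v_2}\|_{L^1_\xi}\big\|_{L^2_t}$ (only the $e_4$-components survive the projection). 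I would control these by $G(T)^2$ using the two-dimensional product/embedding facts $H^s\hookrightarrow\mathcal{F}L^1$ for $s>1$ and $H^s$ algebra, the divergence-free identities $(v\cdot\nabla)B_2 = v_1\partial_1 B_2 - v_2\partial_1 B_1$ and $(B\cdot\nabla)v_2 = B_1\partial_1 v_2 - B_2\partial_1 v_1$ to trade $\partial_2$ for $\partial_1$, and the definition \eqref{G} of $G$ (putting one factor in $L^\infty_t H^m$ and the other in $L^2_t H^m$ or in $L^2_t H^{m-1}$, recalling $\partial_1 B\in L^2_t H^{m-1}$).

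The heart of the matter is the ``$-$'' piece over $\Omega_3$, where \eqref{Omg_3} only yields the heat-type bound $\lesssim e^{-\xi_1^2(t-\tau)}(|\xi_1 f_2|+|f_4|)$. For the data term $\mathcal{I}^{-}(\Omega_3)$ one has $f=\widehat{\mathbf{u}}_0$, so $f_2=\widehat{v}_{0,2}$ and $f_4=\widehat{B}_{0,2}$; the $|\xi_1\widehat{v}_{0,2}|$ contribution is harmless because $\|\xi_1 e^{-\xi_1^2 t}\|_{L^2_t(0,\infty)}$ is a finite constant, yielding $\lesssim\|v_0\|_{H^m}\lesssim E(0)$. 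The $|\widehat{B}_{0,2}|$ contribution is the delicate one, since $\|e^{-\xi_1^2 t}\|_{L^2_t(0,\infty)}\sim|\xi_1|^{-1}$ is singular at $\xi_1=0$; here I would exploit $\operatorname{div}B_0=0$ in the form $|\widehat{B}_{0,2}|=|\xi_1|\,|\widehat{B_0}|/|\xi|$, which exactly cancels the singularity, so that Minkowski's integral inequality gives $\mathcal{I}^{-}(\Omega_3)\lesssim\int_{\Omega_3}\|e^{-\xi_1^2 t}\|_{L^2_t}|\widehat{B}_{0,2}|\,d\xi\lesssim\int_{\Omega_3}\tfrac{|\widehat{B_0}|}{|\xi|}\,d\xi$, which is precisely the term in the statement (and finite because $H^m\cap L^1\hookrightarrow X^m$).

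The remaining nonlinear pieces $\mathcal{J}^{-}(\Omega_3),\mathcal{K}^{-}(\Omega_3)$ are where I expect the real work. After integrating in $\xi$, the kernel $e^{-\xi_1^2(t-\tau)}$ only produces the algebraic decay $\min(1,(t-\tau)^{-1/2})$ for the part multiplying $|f_4|$, which is borderline non-integrable in $L^2_t$, so the crude $L^1_\xi$ bounds are not affordable. The fix is to harvest an extra power of $|\xi_1|$ from the nonlinearity: I would split $(v\cdot\nabla)\mathbf{u}=v_1\partial_1\mathbf{u}+v_2\partial_2\mathbf{u}$ (and $(B\cdot\nabla)\widetilde{\mathbf{u}}$ similarly), use $\operatorname{div}v=\operatorname{div}B=0$ to rewrite these so a $\partial_1$ always sits on them, then distribute that derivative among the two convolution factors and use $|\xi_1|\le\tfrac14$ on $\Omega_3$ to upgrade the temporal decay to $\min(1,(t-\tau)^{-3/4})$, which is $L^2_t$-integrable, while choosing the Young exponents so that the time integrals remain bounded uniformly in $T$. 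This bookkeeping is what forces the mixed norms $\|\wwhat{\partial_1 v}\|_{L^1_T L^1}$ (which will later be absorbed, since it appears with the small factor $G(T)$) and $\|\widehat{v}\|_{L^{4/3}_T L^1}$ onto the right-hand side, alongside $G(T)^2$. Collecting $\mathcal{I}^{\pm},\mathcal{J}^{\pm},\mathcal{K}^{\pm}$ over all three strips yields the claimed inequality.
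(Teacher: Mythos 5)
Your proposal follows the same skeleton as the paper's proof: Duhamel's formula \eqref{df_u}, the anisotropy decomposition \eqref{decom_est_2}, splitting into $\mathcal{I}^{\pm},\mathcal{J}^{\pm},\mathcal{K}^{\pm}$ over $\Omega_1,\Omega_2,\Omega_3$, the div-free cancellation $|\widehat{B}_{0,2}|=|\xi_1||\widehat{B_0}|/|\xi|$ for the data on $\Omega_3$, and Young/Minkowski for the time convolutions. That is exactly what the paper does, and your data estimate and your final RHS both agree with the statement.

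One point where your mechanics are less clean than the paper's, though. For the nonlinear contributions on $\Omega_3$ you propose to harvest the needed $\xi_1$ factor by trading $\partial_2$ for $\partial_1$ via divergence-freeness and then ``distributing'' the derivative. This only makes the $\xi_1$ appear outside the convolution for the $(v\cdot\nabla)B_2$ part (after an extra product-rule rearrangement producing $\partial_1 v$ corrections), and says nothing about $(v\cdot\nabla)B_1$; yet the quantity you actually have to control after \eqref{Omg_3} is $f_4=[\,P(\xi)\widehat{(v\cdot\nabla)B}\,]_2$, which mixes both components of $(v\cdot\nabla)B$ through the Leray projector $P(\xi)=I-\xi\xi^\top/|\xi|^2$. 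The paper gets the $\xi_1$ factor in one stroke from the algebraic identity $[P(\xi)g]_2=\tfrac{\xi_1}{|\xi|^2}(\xi_1 g_2-\xi_2 g_1)$, combined with $\widehat{(v\cdot\nabla)B}=i\xi_k\widehat{v_k B}$ (valid since $\operatorname{div}v=0$), so that $|f_4|\lesssim|\xi_1|\,|\widehat{v\otimes B}|$ pointwise without any derivative shuffling; the same identity covers $|\xi_1 f_2|$ using $|\xi_1|\le\tfrac14$ on $\Omega_3$. Relatedly, your statement that ``only the $e_4$-components survive the projection'' is true for the $e^{-\lambda_+t}$ piece of \eqref{decom_est_2} but not for the $\mathcal{J}^{-},\mathcal{K}^{-}$ pieces, where $\langle f,\mathbf{a}_-^2\rangle$ involves $f_2$ as well; and the paper actually picks up the $G(T)\|\widehat{\partial_1 v}\|_{L^1_TL^1}$ contribution already from $\mathcal{J}^{+}$ and $\mathcal{J}^{-}(\Omega_1\cup\Omega_2)$ (via Young with the $L^2_t$ kernel and $\|\widehat{\partial_1(v\otimes\mathbf{u})}\|_{L^1_TL^1_\xi}$), reserving $\Omega_3$ for the $G(T)\|\widehat{v}\|_{L^{4/3}_TL^1}$ term through $\|\widehat{v\otimes\mathbf{u}}\|_{L^1_TL^1}\le\|\widehat{v}\|_{L^{4/3}_TL^1}\|\widehat{\mathbf{u}}\|_{L^4_TL^1}$. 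Your alternative $L^1_t$-kernel route on the exponential regions and your own bookkeeping for $\Omega_3$ still land on the same right-hand side, so the differences are organizational rather than substantive, but the Leray identity is the one idea your write-up should make explicit.
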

\begin{proof}
    From \eqref{df_u} and the simple fact $\widehat{B}_2 = \langle  \sum_{\gamma=\pm} \widehat{\mathbf{u}},\mathbf{a}_{\gamma}^2 \rangle \langle \mathbf{b}_{\gamma}^2, e_4 \rangle$,
	we have
	\begin{equation}\label{eq_IJK}
		\left\| \int_{\bbR^2} \big|\langle \wwhat{ \mathbf{u}},\mathbf{a}_{\gamma}^2 \rangle \langle \mathbf{b}_{\gamma}^2, e_4 \rangle \big| \,\mathrm{d}\xi \right\|_{L^2(0,T)} \leq \mathcal{I}(\bbR^2) + \mathcal{J}^(\bbR^2) + \mathcal{K}(\bbR^2),
	\end{equation}
	where for any simply connected smooth domain $Q\subseteq \bbR^2$ we set 
\begin{equation*}
    \begin{split}
        \mathcal{I}^{+}(Q)&= \left\| \int_{Q} \big|e^{-\lambda_{+}(\xi)t}  \langle  \widehat{\mathbf{u}}_0(\xi),\mathbf{e_4}\rangle \big| \,\mathrm{d}\xi \right\|_{L^{2}(0,T)}, \\
        \mathcal{I}^{-}(Q)&=  \left\| \int_{Q} \big|(e^{-\lambda_{-}t}-e^{-\lambda_{+}t})  \langle  \widehat{\mathbf{u}}_0(\xi),\mathbf{a}_{-}^2(\xi) \rangle \langle \mathbf{b}_{-}^2, e_4 \rangle \big| \,\mathrm{d}\xi \right\|_{L^{2}(0,T)},
    \end{split}
\end{equation*}
in view of the anisotropic decomposition \eqref{decom_est_2}. 
The terms $\mathcal{J}^{\pm}(Q)$ and $\mathcal{K}^{\pm}(Q)$ are defined accordingly, see the proofs of Proposition~\ref{prop_vderiv} and Proposition~\ref{prop_Bderiv} for the inequality \eqref{eq_IJK}.

	We estimate the above quantities using \eqref{decom_est_2}. By the relation $|e^{-\lambda_{+}(\xi)t}| \leq e^{-\frac t2}$, we have that
	\begin{align*} 
 \mathcal{I}^{+}(\bbR^2) \leq \| e^{-\frac t2} \|_{L^2(0,T)} \int_{\bbR^2} \big|\widehat{B}_2(0) \big| \,\mathrm{d}\xi \lesssim  \| B_2(0) \|_{H^{m-1}}.
	\end{align*}
	By \eqref{Omg_1} and \eqref{Omg_2}, we can see
	\begin{equation*}
		\mathcal{I}^{-}(\Omega_1\cup \Omega_2) \lesssim  \left\| \int_{\bbR^2} e^{-\frac t2} \big| \widehat{\mathbf{u}}_0 \big| \,\mathrm{d}\xi \right\|_{L^2(0,T)} \lesssim  \| \mathbf{u}_0 \|_{H^{m-1}}.
	\end{equation*}
	From \eqref{Omg_3} combined with $
 \operatorname{div}B=0$, we can see for $\xi \in \Omega_3$ that
	\begin{equation}\label{B2_initial}
	\begin{aligned}
		\big|(e^{-\lambda_{-}t} - e^{-\lambda_{+}t}) \langle  \widehat{\mathbf{u}}_0,\mathbf{a}_{-}^2 \rangle \langle \mathbf{b}_{-}^2, e_4 \rangle \big| &\lesssim e^{-\xi_1^2t} \left( \big|\wwhat{ \partial_1 v}_2(0) \big| + \big| \wwhat{ B}_2(0) \big| \right) \\
		&\lesssim e^{-\xi_1^2t} \left( \frac {|\xi_1|^2}{|\xi|}\big| \wwhat{ v}_0 \big| + \frac {|\xi_1|}{|\xi|}\big| \wwhat{ B}_0 \big| \right).
	\end{aligned}
	\end{equation}
	This implies
	\begin{align*}
		\mathcal{I}^{-}(\Omega_3)\lesssim \bigg\| \int_{\bbR^2} \big|\xi_1 e^{-\xi_1^2t} \big| \big(\big|\widehat{v}_0 \big| + \frac 1{|\xi|} \big| \wwhat{ B}_0 \big| \big) \,\mathrm{d}\xi \bigg\|_{L^2(0,T)} \lesssim  \| v_0 \|_{H^{m-1}} + \int_{\bbR^2} \frac 1{|\xi|} \big| \widehat{B}_0 \big| \,\mathrm{d}\xi.
	\end{align*}
	Thus, we obtain $$\sum_{\gamma=\pm}\mathcal{I}^{\gamma}(\bbR^2) \lesssim \| \mathbf{u}_0 \|_{H^m} + \int_{\bbR^2} \frac 1{|\xi|} \big| \widehat{B}_0 \big| \,\mathrm{d}\xi.$$ We proceed with $\mathcal{J}^{\pm}(\bbR^2).$ The definition of $\mathbf{P}(\xi)$ and Young's convolution inequality give
	\begin{align*}
	\mathcal{J}^{+}(\bbR^2) &\lesssim \left\| \int_0^t e^{-\frac {t-\tau}2} \big| \wwhat{\partial_1 (v\otimes \mathbf{u})}(\xi,\tau) \big| \,\mathrm{d}\tau \right\|_{L_T^2 L^1} \lesssim \|\partial_1(v\otimes \mathbf{u})\|_{L_T^1 L^1}\lesssim G(T) \| \widehat{\partial_1 v}\|_{L_T^1 L^1} + G(T)^2,
	\end{align*}
 in light of the product rule $\partial_1(v\otimes u)= \partial_1 v\otimes u + v \partial_1 u.$
	Then we exploit \eqref{Omg_1} and \eqref{Omg_2} to obtain 
	\begin{align*}
	\mathcal{J}^{-}(\Omega_1\cup\Omega_2) &\lesssim \left\| \int_{\bbR^2}\int_0^t e^{-\frac {t-\tau}2} \big| \wwhat{\partial_1 (v\otimes \mathbf{u})}(\xi,\tau) \big| \, \mathrm{d}\tau\mathrm{d}\xi \right\|_{L^2(0,T)}  \lesssim  G(T)  \|\widehat{\partial_1 v}\|_{L_T^1 L^1} + G(T)^2.
	\end{align*}
	Using \eqref{Omg_3} and the definition of $\mathbf{P}(\xi)$ gives
	\begin{equation}\label{detail_est}
	\begin{split}
	\mathcal{J}^{-}(\Omega_3) &\lesssim\left\| \int_{\Omega_3}\int_0^t \big| (e^{-\lambda_{-}(\xi)(t-\tau)} - e^{-\lambda_{+}(\xi)(t-\tau)}) \wwhat{\partial_1(v\otimes \mathbf{u})}(\xi,\tau) \big| \, \mathrm{d}\tau\mathrm{d}\xi \right\|_{L^2(0,T)} \\
		&\lesssim\left\| \int_{\bbR^2}\int_0^t \big| \xi_1 e^{-\xi_1^2(t-\tau)} \wwhat{v\otimes \mathbf{u}}(\xi,\tau) \big| \, \mathrm{d}\tau\mathrm{d}\xi \right\|_{L^2(0,T)} \\
		&\lesssim\int_0^T\int_{\bbR^2} \big| \wwhat{v\otimes \mathbf{u}}(\xi,t) \big| \, \mathrm{d}\xi \mathrm{d}t \\
		&\lesssim \| \widehat{v} \|_{L_T^{\frac 43} L^1} \| \bfu \|_{L_T^4 L^1}.
	\end{split}
 \end{equation}
	Therefore we obtain
	\begin{equation*}
		\mathcal{J}^{\pm}(\bbR^2) \lesssim G(T)^2 + G(T) \int_0^T \int_{\bbR^2} \big| \widehat{\partial_1 v}(\xi,t) \big| \,\ud \xi \ud t + G(T)\| \widehat{v} \|_{L_T^{\frac 43} L^1}.
	\end{equation*}
	We clearly have
	\begin{equation}\label{ineq_Bu}
		\begin{split}\mathcal{K}^{+}(\bbR^2) &\leq \left\| \int_{\bbR^2}\int_0^t e^{-\frac {t-\tau}2} \big| \wwhat{\partial_1 (B \otimes \mathbf{u})}(\xi,\tau) \big| \, \mathrm{d}\tau\mathrm{d}\xi \right\|_{L^2(0,T)} \\
		& \lesssim\left\| \int_{\bbR^2}\big| \wwhat{\partial_1 (B\otimes \mathbf{u})} \big| \, \mathrm{d}\xi \right\|_{L^2(0,T)} \\
		& \lesssim\| \partial_1 \mathbf{u} \|_{L^2(0,T;H^{m-1})} \| \mathbf{u} \|_{L^{\infty}(0,T;H^m)} \\
		& \lesssim G(T)^2.
	\end{split}
 \end{equation}
	Utilizing \eqref{Omg_1} and \eqref{Omg_2}, we see that
\begin{equation}\label{B2_est_1}
		\big| \langle \mathbf{P}(\xi)\wwhat{(B\cdot \nabla)\widetilde{\mathbf{u}}}, \mathbf{a}_{-}^2 \rangle \langle \mathbf{b}_{-}^2,e_4 \rangle \big| \lesssim\big| \wwhat{\partial_1(B\otimes \bfu)} \big|, \qquad \xi \in\Omega_1 \cup \Omega_2.
	\end{equation}
	Then, we can obtain
	\begin{align*}
\mathcal{K}^{-}(\Omega_1\cup\Omega_2) \lesssim \left\| \int_{\bbR^2}\int_0^t e^{-\frac {t-\tau}2} \big| \wwhat{\partial_1 (B \otimes \mathbf{u})}(\xi,\tau) \big| \, \mathrm{d}\tau\mathrm{d}\xi \right\|_{L^2(0,T)}  \lesssim G(T)^2,
	\end{align*}
	similarly with the above estimate on $\mathcal{K}^{+}(\bbR^2)$. On the other hand, \eqref{Omg_3} gives
	\begin{equation}\label{B2_est_2}
		\big| \langle \mathbf{P}(\xi)\wwhat{(B\cdot \nabla)\widetilde{\mathbf{u}}}, \mathbf{a}_{-}^2 \rangle \langle \mathbf{b}_{-}^2,e_4 \rangle \big|\lesssim\big| \wwhat{\partial_1(B\otimes v)} \big| + C \big| \wwhat{\partial_1^2(B\otimes B)} \big|, \qquad \xi \in\Omega_3
	\end{equation}
	by using the fact $\xi_1 \leq \frac 14$ for any $\xi\in \Omega_3.$ So we have the bound $\mathcal{K}^{-}(\Omega_3)\lesssim \mathcal{K}_1^{-}(\Omega_3) + \mathcal{K}_2^{-}(\Omega_3)$ where the new quantities are defined in an obvious way according to \eqref{B2_est_2}. We can deduce as in \eqref{detail_est} that
	\begin{equation*}
	\begin{gathered}
	\mathcal{K}_{1}^{-}(\Omega_3) \lesssim \| \widehat{v} \|_{L_T^{\frac 43} L^1} \| B \|_{L_T^4 L^1}, \\
	\mathcal{K}_2^{-}(\Omega_3) \lesssim \left\| \int_{\bbR^2}\int_0^t \big| \xi_1^2 e^{-\lambda_{-}(\xi)(t-\tau)} \wwhat{B\otimes B}(\xi,\tau) \big| \, \mathrm{d}\tau\mathrm{d}\xi \right\|_{L^2(0,T)} \lesssim \bigg\| \int_{\bbR^2} \big| \wwhat{ B\otimes B} \big| \, \mathrm{d}\xi\bigg\|_{L^2(0,T)}  \lesssim G(T)^2.	
	\end{gathered}
	\end{equation*}
	Therefore, we establish
	\begin{equation*}
\sum_{\gamma=\pm}\mathcal{K}^{\gamma}(\bbR^2) \lesssim G(T)^2.
	\end{equation*}
	Collecting all the previous estimates, we complete the proof. 
\end{proof}

\begin{proposition}\label{prop_B2_1}
	Let $m \geq 4$. Assume that $(v,B)\in C([0,\infty);H^m(\bbR^2))$ is a classical solution to \eqref{mhd_eq}. Then there holds
	\begin{equation*}
	\begin{split}
		\|\widehat{\partial_1B}_2\|_{L_T^1 L^1} &\lesssim E(0) + \|\frac{\widehat{B}_0}{|\xi|}\|_{L^1} + G(T) (G(T)+\| \widehat{\partial_1v} \|_{L_T^1 L^1} +  \| \widehat{\partial_1B}_2 \|_{L_T^1 L^1} + \| \widehat{v} \|_{L_T^{\frac 43} L^1} + \| \widehat{\partial_1B} \|_{L_T^{\frac 43} L^1})
	\end{split}
	\end{equation*}
	for all $T \geq 0$.
\end{proposition}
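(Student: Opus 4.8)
The plan is to run the same Duhamel–plus–anisotropic–decomposition scheme used for Propositions~\ref{prop_vderiv}, \ref{prop_Bderiv} and \ref{prop_B2}, now for $\widehat{\partial_1 B}_2 = i\xi_1\widehat{B}_2$. Since $\widehat{B}_2 = \langle\widehat{\mathbf{u}}, e_4\rangle = \sum_{\gamma=\pm}\langle\widehat{\mathbf{u}},\mathbf{a}_\gamma^2\rangle\langle\mathbf{b}_\gamma^2,e_4\rangle$, inserting \eqref{df_u} gives $\|\widehat{\partial_1 B}_2\|_{L_T^1 L^1}\lesssim I(\bbR^2)+J(\bbR^2)+K(\bbR^2)$, where $I,J,K$ carry respectively $\langle\widehat{\mathbf{u}}_0,\mathbf{a}_\gamma^2\rangle$, $\langle\mathbf{P}(\xi)\wwhat{(v\cdot\nabla)\mathbf{u}},\mathbf{a}_\gamma^2\rangle$ and $\langle\mathbf{P}(\xi)\wwhat{(B\cdot\nabla)\widetilde{\mathbf{u}}},\mathbf{a}_\gamma^2\rangle$, each paired with $\langle\mathbf{b}_\gamma^2,e_4\rangle$, multiplied by $|\xi_1|$, integrated over the indicated domain and measured in $L^1(0,T)$. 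Each of $I,J,K$ then splits by the anisotropy decomposition \eqref{decom_est_2} into a $\lambda_+$–piece $\mathcal{I}^+,\mathcal{J}^+,\mathcal{K}^+$ and a difference piece $\mathcal{I}^-,\mathcal{J}^-,\mathcal{K}^-$, which I would estimate separately over $\Omega_1$, $\Omega_2$ and $\Omega_3$ using the pointwise bounds of Lemma~\ref{lem_note}.

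On $\Omega_1\cup\Omega_2$, and for every $\lambda_+$–piece, the propagator decays like $e^{-t/4}$ (by $|e^{-\lambda_+t}|\le e^{-t/2}$ and \eqref{Omg_1}--\eqref{Omg_2}); after absorbing $|\xi_1|$ into a $\partial_1$, using $|\mathbf{P}(\xi)|\lesssim1$ and Young's inequality in $\tau$, these contributions reduce exactly as in Proposition~\ref{prop_vderiv}: the initial–data part is $\lesssim E(0)$; the velocity part, split by $\partial_1((v\cdot\nabla)\mathbf{u})=(\partial_1v\cdot\nabla)\mathbf{u}+(v\cdot\nabla)\partial_1\mathbf{u}$, is $\lesssim G(T)\|\widehat{\partial_1 v}\|_{L_T^1 L^1}+G(T)^2$; and the magnetic part, after $\partial_1((B\cdot\nabla)\widetilde{\mathbf{u}})=(\partial_1B\cdot\nabla)\widetilde{\mathbf{u}}+(B\cdot\nabla)\partial_1\widetilde{\mathbf{u}}$ together with $\|\wwhat{\partial_1^2 v}\|_{L^1}\lesssim\|\widehat{\partial_1 v}\|_{L^1}^{1/2}\|v\|_{H^4}^{1/2}$ and, using $\operatorname{div}B=0$, $\|\wwhat{\partial_1^2 B_1}\|_{L^1}+\|\wwhat{\partial_1^2 B_2}\|_{L^1}\lesssim\|\widehat{\partial_1 B}_2\|_{L^1}^{1/2}\|\partial_1 B\|_{H^3}^{1/2}$ from \eqref{intp_2}, is $\lesssim G(T)^2+G(T)(\|\widehat{\partial_1 v}\|_{L_T^1 L^1}+\|\widehat{\partial_1 B}_2\|_{L_T^1 L^1})$; the self–referential $\|\widehat{\partial_1 B}_2\|_{L_T^1 L^1}$ is harmless since it carries the small factor $G(T)$ in the final continuity argument. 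For the initial–data piece on $\Omega_3$ I would use $\operatorname{div}v=\operatorname{div}B=0$ to write $|\wwhat{\partial_1 v}_2(0)|=|\xi_1||\widehat{v}_{0,2}|$ and $|\widehat{B}_{0,2}|=\tfrac{|\xi_1|}{|\xi|}|\widehat{B}_0|$, so that \eqref{Omg_3} yields the pointwise bound $\lesssim\xi_1^2e^{-\xi_1^2t}\bigl(|\widehat{v}_{0,2}|+|\xi|^{-1}|\widehat{B}_0|\bigr)$; since $\int_0^\infty\xi_1^2e^{-\xi_1^2t}\,dt\le1$ and $\int_{\Omega_3}|\widehat{v}_{0,2}|\,d\xi\lesssim\|v_0\|_{H^m}$ (Cauchy--Schwarz in $\xi_2$, valid because $m\ge4$), this gives $\mathcal{I}^-(\Omega_3)\lesssim E(0)+\|\widehat{B}_0/|\xi|\|_{L^1}$.

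The bulk of the work, and the main difficulty, is $\mathcal{J}^-(\Omega_3)$ and $\mathcal{K}^-(\Omega_3)$, where one only has the anisotropic decay $e^{-\lambda_-(t-\tau)}\le e^{-\xi_1^2(t-\tau)}$ from \eqref{omg_est}, and \eqref{Omg_3} (with the extra $|\xi_1|$) produces a pointwise bound of the form $e^{-\xi_1^2(t-\tau)}\bigl(|\xi_1^2 f_2|+|\xi_1 f_4|\bigr)$, with $f_2,f_4$ the second and fourth components of $\mathbf{P}(\xi)\wwhat{(v\cdot\nabla)\mathbf{u}}$ (resp.\ $\mathbf{P}(\xi)\wwhat{(B\cdot\nabla)\widetilde{\mathbf{u}}}$). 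The term $|\xi_1^2 f_2|$ lands in the divergence–free slot: using $(v\cdot\nabla)v=\operatorname{div}(v\otimes v)$ and that the Leray projection is divergence–free one gets $|\xi_1^2 f_2|\lesssim\xi_1^2|\wwhat{v\otimes v}|$, the kernel $\xi_1^2e^{-\xi_1^2\tau}$ has $L^1_\tau$–mass $\le1$, and Young's inequality in $\tau$ closes it into $G(T)^2$ (likewise $\xi_1^2|\wwhat{B\otimes B}|$ for $\mathcal{K}^-$). The term $|\xi_1 f_4|$ is the obstruction: the fourth coordinate is \emph{not} divergence–free, so this slot does not automatically carry the extra $|\xi_1|$ that would make the time–convolution kernel $|\xi_1|e^{-\xi_1^2\tau}$ admissible uniformly in $T$. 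To repair it I would expand the Leray projector and repeatedly use $\operatorname{div}v=\operatorname{div}B=0$ to convert each $\partial_2$ falling on $B$ (resp.\ $v$) into a $\partial_1$ plus a lower–order commutator whose multiplier is $O(|\xi_1|)$ — e.g.\ $(v\cdot\nabla)B_2=v_1\partial_1 B_2-v_2\partial_1 B_1$, and a $\partial_2 v_1$ arising through the projector becomes, after one more integration by parts, a $\partial_1v_1$ term plus a term with multiplier $\tfrac{\xi_1\xi_2^2}{|\xi|^2}=O(|\xi_1|)$ — so that $|\xi_1 f_4|$ is controlled by $|\xi_1|$ times quadratic terms each carrying a $\partial_1$–derivative on one factor, plus a genuinely smaller term with an explicit $|\xi_1|$. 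The resulting convolutions are then handled by Young's inequality in $\tau$, the algebraic–time–decay extraction $\int_{\bbR^2}|e^{-\xi_1^2 s/2}\wwhat{h}|\,d\xi\lesssim s^{-\frac12(1-\frac1p)}\|\wwhat{h}\|_{L^p_{\xi_1}L^1_{\xi_2}}$ used in the proof of Proposition~\ref{prop_Bderiv}, and H\"older in time; the only norms that survive — precisely because they are not controlled by $G(T)$ alone — are the ones already on the right–hand side of the statement, each multiplied by a $G(T)$ supplied by \eqref{intp_2} (e.g.\ $\|\widehat{\partial_1 B}\|_{L^4_T L^1}\lesssim G(T)$ and $\|\widehat{B}\|_{L^\infty_T L^1}\lesssim G(T)$). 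Collecting $\mathcal{I}^\pm,\mathcal{J}^\pm,\mathcal{K}^\pm$ over $\Omega_1\cup\Omega_2\cup\Omega_3$ gives the claimed inequality.

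The step I expect to be the main obstacle is exactly this $\Omega_3$–analysis of the fourth–coordinate projections of the mixed quadratic terms $(v\cdot\nabla)B$ and $(B\cdot\nabla)v$: lacking the divergence–free structure they do not come with the power of $|\xi_1|$ that the anisotropic kernel $|\xi_1|e^{-\xi_1^2(t-\tau)}$ demands, and recovering it forces the careful incompressibility–driven rewriting of the nonlinearities together with the H\"older–in–time bookkeeping — which is what is responsible for the $L_T^{4/3}$ norms and for the overall factor $G(T)$ in the stated estimate.
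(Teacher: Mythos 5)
Your plan follows the paper's proof almost line for line: Duhamel's formula \eqref{df_u}, the anisotropy decomposition \eqref{decom_est_2}, the domain splitting $\bbR^2=\Omega_1\cup\Omega_2\cup\Omega_3$, the pointwise bounds of Lemma~\ref{lem_note} (with \eqref{B2_initial} for the $\Omega_3$ initial-data piece), the interpolation \eqref{intp_2}, and the same Young/H\"older bookkeeping — so this is the same route and, up to the level of detail expected in a sketch, correct.

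The one point worth flagging is the paragraph where you diagnose the "main obstruction." You assert that the fourth coordinate of the projected nonlinearity ``is not divergence-free, so this slot does not automatically carry the extra $|\xi_1|$,'' and propose to manufacture that factor by manually trading $\partial_2$'s for $\partial_1$'s via $\operatorname{div}v=\operatorname{div}B=0$ plus a commutator. This misreads the structure you have already built in. In \eqref{df_u} the $B$-slot nonlinearity is pre-composed with the block projector $\mathbf{P}(\xi)$ — a legitimate rewriting precisely because the total $B$-nonlinearity $(v\cdot\nabla)B-(B\cdot\nabla)v$ is divergence-free — and the fourth component of $\mathbf{P}(\xi)\wwhat{h}$ is $(P(\xi)(\wwhat{h}_3,\wwhat{h}_4))_2$, whose multiplier is $\tfrac{\xi_1}{|\xi|^2}(-\xi_2,\xi_1)$. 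The $\xi_1$ factor is therefore there \emph{automatically}, from the second row of the Leray multiplier; that is exactly where the paper's bound \eqref{B2deriv_est} (the $\wwhat{\partial_1^2(B\otimes v)}$ and $\wwhat{\partial_1^3(B\otimes B)}$ terms) comes from, and no commutator rewriting is needed. Your proposed repair would land in the same place, but the stated diagnosis — a missing $|\xi_1|$ that must be recovered by incompressibility-driven manipulation of the bilinear terms — is a misdescription of what is already supplied for free by the projector in \eqref{df_u}; the genuinely delicate point in the paper's $\Omega_3$ analysis is rather the H\"older-in-time and interpolation bookkeeping needed to convert the resulting $\|\wwhat{v\otimes\mathbf{u}}\|_{L^1_TL^1}$, $\|\wwhat{\partial_1 B\otimes B}\|_{L^1_TL^1}$ etc.\ into the norms on the right-hand side multiplied by $G(T)$.
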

\begin{proof}
	From \eqref{df_u} and $\widehat{\partial_1B}_2 = \sum_{\gamma=\pm} \langle \widehat{\partial_1 \mathbf{u}},\mathbf{a}_{\gamma}^2 \rangle \langle \mathbf{b}_{\gamma}^2, e_4 \rangle$, we have
	\begin{equation*}
		\left\| \int_{\bbR^2} \big|\sum_{\gamma=\pm} \langle \wwhat{ \partial_1\mathbf{u}},\mathbf{a}_{\gamma}^2 \rangle \langle \mathbf{b}_{\gamma}^2, e_4 \rangle \big| \,\mathrm{d}\xi \right\|_{L^1(0,T)} \leq \sum_{\gamma=\pm }\left(\mathcal{I}^{\gamma}(\bbR^2) + \mathcal{J}^{\gamma}(\bbR^2) + \mathcal{K}^{\gamma}(\bbR^2)\right),
	\end{equation*}
	where for any simply connected smooth domain $Q\subseteq \bbR^2$ we set, for example, 
\begin{equation*}
    \begin{split}
        \mathcal{I}^{+}(Q)&= \left\| \int_{Q} \big|e^{-\lambda_{+}(\xi)t}  \langle  \widehat{\partial_1 \mathbf{u}}_0(\xi),\mathbf{e_4}\rangle \big| \,\mathrm{d}\xi \right\|_{L^{2}(0,T)}, \\
        \mathcal{I}^{-}(Q)&=  \left\| \int_{Q} \big|(e^{-\lambda_{-}t}-e^{-\lambda_{+}t})  \langle  \widehat{\partial_1 \mathbf{u}}_0(\xi),\mathbf{a}_{-}^2(\xi) \rangle \langle \mathbf{b}_{-}^2, e_4 \rangle \big| \,\mathrm{d}\xi \right\|_{L^{2}(0,T)}.
    \end{split}
\end{equation*}
Note that the quantities $\mathcal{J}^{\pm}$ and $\mathcal{K}^{\pm}$ are defined in the obvious fashion so that they correspond to the nonlinear parts
$ \mathbf{P}(\xi)\wwhat{\partial_1(v\cdot\nabla)\mathbf{u}}$ and $ \mathbf{P}(\xi)\wwhat{\partial_1 (B\cdot\nabla)\mathbf{u}}$, respectively, in view of the anisotropic decomposition \eqref{decom_est_2}. For the details, see the proofs of the previous propositions.

	We can estimate $\mathcal{I}^{\pm}$ similarly with the proof of Proposition~\ref{prop_B2}. See that
	\begin{align*}
	\mathcal{I}^{+}(\bbR^2) &\leq \| e^{-\frac t2} \|_{L^1(0,T)} \|\widehat{\partial_1 B}_2(0)\|_{L^1} \lesssim \| B_2(0) \|_{H^{m-1}}, \\
 \mathcal{I}^{-}(\Omega_1\cup\Omega_2) &\lesssim \left\| \int_{\bbR^2} e^{-\frac t2} \big| \widehat{\partial_1 \mathbf{u}}_0 \big| \,\mathrm{d}\xi \right\|_{L^1(0,T)} \lesssim \| \mathbf{u}_0 \|_{H^{m-1}},\\
 \mathcal{I}^{-}(\Omega_3) &\lesssim \bigg\| \int_{\bbR^2} \big|\xi_1^2 e^{-\xi_1^2t} \big| \big(\big|\widehat{v}_0 \big| + \frac {| \wwhat{ B}_0 |}{|\xi|}  \big) \,\mathrm{d}\xi \bigg\|_{L^1(0,T)} \lesssim \| v_0 \|_{H^{m-2}} + \int_{\bbR^2} \frac {| \widehat{B}_0 |}{|\xi|}  \,\mathrm{d}\xi,
	\end{align*}
where we used \eqref{B2_initial} for the estimate of $\mathcal{I}^{-}(\Omega_3).$
	This leads to
	$\mathcal{I}^{\pm}(\bbR^2) \lesssim \| \mathbf{u}_0 \|_{H^m} + \int_{\bbR^2} \frac {| \widehat{B}_0 |}{|\xi|}  \,\mathrm{d}\xi.$
	Leveraging the definition of $\mathbf{P}(\xi)$ and Young's convolution inequality, we get
	\begin{align*}
		\mathcal{J}^{+}(\bbR^2) &\leq \left\| \int_{\bbR^2}\int_0^t e^{-\frac {t-\tau}2} \big| \wwhat{\partial_1^2 (v\otimes \mathbf{u})}(\xi,\tau) \big| \, \mathrm{d}\tau\mathrm{d}\xi \right\|_{L^1(0,T)} \\
		 &\lesssim \| \wwhat{ \partial_1^2v\otimes \mathbf{u}}\|_{L_T^1 L^1} + \| \wwhat{ \partial_1v\otimes \partial_1 \mathbf{u}}\|_{L_T^1 L^1} + \| \wwhat{v\otimes \partial_1^2\mathbf{u}}\|_{L_T^1 L^1} \\ &\lesssim  G(T) \| \widehat{\partial_1 v}\|_{L_T^1 L^1} + G(T)^2.
	\end{align*}
	In the above computation, we employed the interpolation inequality $\int_{\bbR^2} \big| \widehat{\partial_1^2v} \big| \,\ud \xi \lesssim \| \widehat{\partial_1v} \|_{L^1}^{\frac 12} \| v \|_{H^4}^{\frac 12}$. Meanwhile, \eqref{Omg_1} and \eqref{Omg_2} yield the estimate for $\mathcal{J}^{-}(\Omega_1\cup\Omega_2)$ as
	\begin{align*}
		\mathcal{J}^{-}(\Omega_1\cup\Omega_2) &\leq \left\| \int_{\bbR^2}\int_0^t e^{-\frac {t-\tau}2} \big| \wwhat{\partial_1^2 (v\otimes \mathbf{u})}(\xi,\tau) \big| \, \mathrm{d}\tau\mathrm{d}\xi \right\|_{L^1(0,T)} \lesssim G(T) \| \widehat{\partial_1 v}\|_{L_T^1 L^1} + G(T)^2.
	\end{align*}
	To bound $\mathcal{J}^{-}(\Omega_3)$, we exploit \eqref{Omg_3} and the definition of $\mathbf{P}(\xi)$ to obtain
\begin{equation*}
	\begin{aligned}
		\mathcal{J}^{-}(\Omega_3)
		\lesssim \left\| \int_{\bbR^2}\int_0^t \big| \xi_1^2 e^{-\xi_1^2(t-\tau)} \wwhat{v\otimes \mathbf{u}}(\xi,\tau) \big| \, \mathrm{d}\tau\mathrm{d}\xi \right\|_{L^1(0,T)} \lesssim  \|\wwhat{v\otimes \mathbf{u}}\|_{L_T^1 L^1} \lesssim \| \widehat{v} \|_{L_T^{\frac 43} L^1} \| \bfu \|_{L_T^4 L^1}.
	\end{aligned}
	\end{equation*}
It follows that	\begin{equation*}
		\sum_{\gamma=\pm}\mathcal{J}^{\gamma}(\bbR^2) \lesssim G(T)^2 + G(T) \|\wwhat{\partial_1 v}\|_{L_T^1 L^1} + G(T)\| \widehat{v} \|_{L_T^{\frac 43} L^1}.
	\end{equation*}
It suffices to estimate $\mathcal{K}^{\pm}(\bbR^2).$ We begin with the observation that
	\begin{align*}
		&\left\| \int_{\bbR^2}\int_0^t \big| e^{-\lambda_{+}(\xi)(t-\tau)} \langle \mathbf{P}(\xi)\wwhat{\partial_1 (B\cdot \nabla)\widetilde{\mathbf{u}}}(\xi,\tau),e_4 \rangle \big| \, \mathrm{d}\tau\mathrm{d}\xi \right\|_{L^1(0,T)} \\
		&\hphantom{\qquad} \leq \left\| \int_{\bbR^2}\int_0^t e^{-\frac {t-\tau}2} \big| \wwhat{\partial_1^2 (B \otimes v)}(\xi,\tau) \big| \, \mathrm{d}\tau\mathrm{d}\xi \right\|_{L^1(0,T)} + \left\| \int_{\bbR^2}\int_0^t e^{-\frac {t-\tau}2} \big| \wwhat{\partial_1^2 (B \otimes B)}(\xi,\tau) \big| \, \mathrm{d}\tau\mathrm{d}\xi \right\|_{L^1(0,T)}.
	\end{align*}
	In the above inequality, we note that the first term has been already estimated in \eqref{ineq_Bu} during the proof of Proposition~\ref{prop_B2}, and it is bounded below by $G(T)^2$. Then we need to control the second quantity in the above. The two simple interpolation inequalities 
	$\| \widehat{\partial_1^2B_1} \|_{L^1} \lesssim \| \widehat{\nabla \partial_1 B_1} \|_{L^1} \lesssim \| \widehat{\partial_1B}_2 \|_{L^1}^{\frac 12} \| \partial_1 B \|_{H^3}^{\frac 12}$ and
$\|\widehat{\partial_1^2B}_2\|_{L^1} \lesssim \| \widehat{\partial_1B}_2 \|_{L^1}^{\frac 12} \| \partial_1 B \|_{H^3}^{\frac 12}$ allows us to bound the second term as
\begin{equation*}
		\left\| \int_{\bbR^2}\int_0^t e^{-\frac {t-\tau}2} \big| \wwhat{\partial_1^2 (B \otimes B)}(\xi,\tau) \big| \, \mathrm{d}\tau\mathrm{d}\xi \right\|_{L^1(0,T)} \lesssim G(T) \| \widehat{\partial_1B}_2 \|_{L_T^1 L^1} + G(T)^2,
	\end{equation*}
which means that we have
	\begin{equation*}
 \mathcal{K}^{+}(\bbR^2) \lesssim G(T) \| \widehat{\partial_1v} \|_{L_T^1 L^1} + G(T) \| \widehat{\partial_1B}_2 \|_{L_T^1 L^1} + G(T)^2.
\end{equation*}
	Using \eqref{B2_est_1}, we also have
	\begin{align*}
\mathcal{K}^{-}(\Omega_1\cup\Omega_2) \leq \left\| \int_0^t e^{-\frac {t-\tau}2} \big| \wwhat{\partial_1 (B \otimes \mathbf{u})}(\xi,\tau) \big| \, \mathrm{d}\tau \right\|_{L_T^1 L^1} \lesssim G(T) (\| \widehat{\partial_1v} \|_{L_T^1 L^1} +  \| \widehat{\partial_1B}_2 \|_{L_T^1 L^1}) + G(T)^2.
	\end{align*}
	On the other hand, \eqref{Omg_3} gives \begin{equation}\label{B2deriv_est}
		\big| \langle \mathbf{P}(\xi)\wwhat{(B\cdot \nabla)\widetilde{\partial_1 \mathbf{u}}}, \mathbf{a}_{-}^2 \rangle \langle \mathbf{b}_{-}^2,e_4 \rangle \big|\lesssim\big| \wwhat{\partial_1^2(B\otimes v)} \big| + C \big| \wwhat{\partial_1^3(B\otimes B)} \big|, \qquad \xi \in\Omega_3
	\end{equation}
	by using the fact $\xi_1 \leq \frac 14$ for any $\xi\in \Omega_3.$ Thus we have the bound $\mathcal{K}^{-}(\Omega_3)\lesssim \mathcal{K}_1^{-}(\Omega_3) + \mathcal{K}_2^{-}(\Omega_3)$ where the new quantities are defined in an obvious way according to \eqref{B2deriv_est}. We see that \begin{equation*}
		\mathcal{K}_1^{-}(\Omega_3) \leq \left\| \int_{\Omega_3}\int_0^t \big| \xi_1^2e^{-\xi_1^2(t-\tau)} \wwhat{B\otimes v}(\xi,\tau) \big| \, \mathrm{d}\tau\mathrm{d}\xi \right\|_{L^1(0,T)} \lesssim G(T)\| \widehat{v} \|_{L_T^{\frac 43} L^1}.
	\end{equation*}
	We further compute
	\begin{align*}
		\mathcal{K}_2^{-}(\Omega_3) \lesssim \left\| \int_0^t \big| \xi_1^2 e^{-\xi_1^2(t-\tau)} \wwhat{\partial_1(B\otimes B)}(\xi,\tau) \big| \, \mathrm{d}\tau \right\|_{L_T^1 L^1} \lesssim \|\wwhat{ \partial_1B\otimes B} \|_{L_T^1 L^1} \lesssim G(T) \| \widehat{\partial_1B} \|_{L_T^{\frac 43} L^1}.	
	\end{align*}
	This concludes that
	\begin{equation*}
\sum_{\gamma=\pm}\mathcal{K}^{\gamma}(\bbR^2) \lesssim G(T)^2 + G(T) (\| \widehat{\partial_1v} \|_{L_T^1 L^1} +  \| \widehat{\partial_1B}_2 \|_{L_T^1 L^1} + \| \widehat{v} \|_{L_T^{\frac 43} L^1} + \| \widehat{\partial_1B} \|_{L_T^{\frac 43} L^1}).	\end{equation*}
	The proof is finished.
\end{proof}

\subsection{Proof of Theorem~\ref{thm1}}
\begin{proof}[Proof of Theorem~\ref{thm1}]
 Assume that $m \geq 4$ and $\| (v_0,B_0) \|_{X^m} \leq \delta$ for a sufficiently small $\delta>0$. Let $(v,B)$ be the local-in-time solution obtained in Proposition~\ref{loc_prop} such that $(v,B) \in C([0,T_{\ast});H^m(\bbR^2))$ for the maximal time $T_{\ast} > 0$. To conclude that $T_{\ast} = \infty$, it suffices to show that $\sup_{T \in [0,T_{\ast})} G(T) $ is finite. For simplicity, we define a mediating quantity $H(T)$ as 
\begin{gather*}
    H(T) := \|\widehat{\partial_1B}_2\|_{L_T^1 L^1} + \|\widehat{B}_2\|_{L_T^2 L^1} + \|\widehat{\nabla B}_2\|_{L_T^{\frac43}L^1} + \|\widehat{\partial_1 v}\|_{L_T^1 L^1} + \| \frac {|\widehat{v}_2|}{\sqrt{|\xi_1|}} \|_{L^2(0,T;L^2_{\xi_1}L^1_{\xi_2})} + \| \frac {|\widehat{v}_2|}{\sqrt{|\xi_1|}} \|_{L_T^{\frac 43} L^1}.
\end{gather*}
From Proposition~\ref{prop_v}--\ref{prop_B2_1}, we deduce the following. There exists a constant $C_1>0$ such that $$H(T) \leq C_1\delta + C_1G(T)^2 + C_1G(T)H(T), \qquad T \in [0,T_{\ast}).$$ On the other hand, we have from \eqref{Em_ineq} and \eqref{A_est} that
\begin{gather*}
    G(T)^2 \leq 3E(0)^2 + C_2G(T)^3 + C_2G(T)^2 (\|\widehat{\partial_1 v}\|_{L_T^1 L^1} + \|\widehat{B}_2\|_{L_T^2 L^1}), \qquad T \in [0,T_{\ast})
\end{gather*}
for some $C_2>0$. Since $G(\cdot)$ is continuous, there exists $T \in [0,T_{\ast})$ with $G(T) \leq 4E(0)$. Then for $\delta>0$ with $4C_1\delta \leq \frac 12$, there holds $H(T) \leq 2C_1\delta + 2C_1G(T)^2$. Therefore, it follows that $$G(T)^2 \leq 3E(0)^2 + 2C_1 C_2 \delta G(T)^2 + C_2G(T)^3 + 2C_1C_2 G(T)^4.$$ By taking sufficiently small $\delta>0$, we can guarantee that $G(T)^2 \leq 4E(0)^2$. Repeating the above process, we reach that $\sup_{T \in [0,T_{\ast})} G(T)^2 \leq 4E(0)^2$. This completes the proof.
\end{proof}

\section{Appendix}
\subsection{Proof of Proposition~\ref{prop_v}}
	
	We bound 
		$\| \frac {|\widehat{v}_2|}{\sqrt{|\xi_1|}} \|_{L_T^{\frac 43} L^1}$ first. From \eqref{df_u} and the simple fact $\widehat{v}_2 = \sum_{\gamma=\pm} \langle  \widehat{\mathbf{u}},\mathbf{a}_{\gamma}^2 \rangle \langle \mathbf{b}_{\gamma}^2, e_2 \rangle$,
	we have
	\begin{equation*}
		\left\| \int_{\bbR^2} \frac 1{\sqrt{|\xi_1|}} \big|\sum_{\gamma=\pm} \langle \wwhat{ \mathbf{u}},\mathbf{a}_{\gamma}^2 \rangle \langle \mathbf{b}_{\gamma}^2, e_2 \rangle \big| \,\mathrm{d}\xi \right\|_{L^{\frac43}(0,T)} \leq \sum_{\gamma=\pm}\left(\mathcal{I}^{\gamma}(\bbR^2) + \mathcal{J}^{\gamma}(\bbR^2) + \mathcal{K}^{\gamma}(\bbR^2)\right),
	\end{equation*}
	where for any simply connected smooth domain $Q\subseteq \bbR^2$ we set, for example,
\begin{align*}
    \mathcal{I}^{+}(Q)&:=\left\| \int_{\bbR^2} \frac 1{\sqrt{|\xi_1|}} \big|e^{-\lambda_{+}t} \langle \widehat{\mathbf{u}}_0, e_2 \rangle \big| \,\mathrm{d}\xi \right\|_{L^{\frac 43}(0,T)} \\
    \mathcal{I}^{-}(Q)&:=\left\| \int_{Q} \frac 1{\sqrt{|\xi_1|}} \big|(e^{-\lambda_{-}t}-e^{-\lambda_{+}t}) \langle  \widehat{\mathbf{u}}_0(\xi),\mathbf{a}_{-}^2(\xi) \rangle \langle \mathbf{b}_{-}^2, e_2 \rangle \big| \,\mathrm{d}\xi \right\|_{L^{\frac43}(0,T)},
\end{align*}
in view of the anisotropic decomposition \eqref{decom_est_1}. Recall the decomposition
\begin{equation*}\label{decomp}
    \sum_{\gamma=\pm} e^{-\lambda_{\gamma}t} \langle  \widehat{\mathbf{f}},\mathbf{a}_{\gamma}^2 \rangle \langle \mathbf{b}_{\gamma}^2, e_2 \rangle = (e^{-\lambda_{-}t} - e^{-\lambda_{+}t}) \langle  \widehat{\mathbf{f}},\mathbf{a}_{-}^2 \rangle \langle \mathbf{b}_{-}^2, e_2 \rangle + e^{-\lambda_{+}t} \langle \widehat{\mathbf{f}}, e_2 \rangle ,\quad \forall \widehat{\mathbf{f}} \in \bbC^4.
\end{equation*}
 The quantities $\mathcal{J}^{\pm}(Q)$ and $\mathcal{K}^{\pm}(Q)$ are also defined accordingly to such a decomposition, which correspond to the nonlinear terms $\mathbf{P}(\xi)\wwhat{(v\cdot \nabla)\mathbf{u}}(\xi)$ and $\mathbf{P}(\xi)  \wwhat{(B\cdot \nabla)\widetilde{\mathbf{u}}}(\xi)$ that stem from the Duhamel formula \eqref{df_u}, respectively.
 We begin by bounding $\mathcal{I}^{\pm}(\bbR^2).$ By the relation $|e^{-\lambda_{+}(\xi)t}| \leq e^{-\frac t2}$, we have that
	\begin{align*}
	\mathcal{I}^{+}(\bbR^2) \leq \| e^{-\frac t2} \|_{L^{\frac 43}(0,T)} \int_{\bbR^2} \frac 1{\sqrt{|\xi_1|}}\big|\widehat{v}_2(0) \big| \,\mathrm{d}\xi \lesssim  \int_{\bbR^2} \frac {\sqrt{|\xi_1|}}{|\xi|}\big|\widehat{v}_0 \big| \,\mathrm{d}\xi \lesssim \| v_0 \|_{H^1}.
	\end{align*}
 Note that we used $\operatorname{div}v=0$ in the above computation.
	By \eqref{Omg_1}, \eqref{Omg_2}, and the simple fact $\frac{1}{\sqrt{|\xi_1|}} \lesssim 1 $ for $\xi\in \Omega_1\cup\Omega_2$, we see that
	\begin{align*}
		\mathcal{I}^{-}(\Omega_1\cup\Omega_2) \lesssim  \left\| \int_{\bbR^2} e^{-\frac t2} \big| \widehat{\mathbf{u}}_0 \big| \,\mathrm{d}\xi \right\|_{L^{\frac 43}(0,T)} \lesssim \| \mathbf{u}_0 \|_{H^{2}}.
	\end{align*}
	From \eqref{Omg_3}  we can see for $\xi \in \Omega_3$ that
	\begin{equation}\label{v_initial}
	\begin{aligned}
		\frac 1{\sqrt{|\xi_1|}} \big|(e^{-\lambda_{-}t} - e^{-\lambda_{+}t}) \langle  \widehat{\mathbf{u}}_0,\mathbf{a}_{-}^2 \rangle \langle \mathbf{b}_{-}^2, e_2 \rangle \big| &\leq Ce^{-\xi_1^2t} \frac 1{\sqrt{|\xi_1|}} \left( \big|\wwhat{ \partial_1^2 v}_2(0) \big| + \big| \wwhat{ \partial_1B}_2(0) \big| \right) \\
		&\leq Ce^{-\xi_1^2t} \left( \frac {|\xi_1|^{\frac 52}}{|\xi|}\big| \wwhat{ v}_0 \big| + \frac {|\xi_1|^{\frac 32}}{|\xi|}\big| \wwhat{ B}_0 \big| \right).
	\end{aligned}
	\end{equation}
	This implies
	\begin{align*}
		\mathcal{I}^{-}(\Omega_3) \lesssim \bigg\| \int_{\bbR^2} \big|\xi_1^{\frac 32} e^{-\xi_1^2t} \big| \big(\big|\widehat{v}_0 \big| + \frac 1{|\xi|} \big| \wwhat{ B}_0 \big| \big) \,\mathrm{d}\xi \bigg\|_{L^{\frac 43}(0,T)} \lesssim \| v_0 \|_{H^{1+}} + \int_{\bbR^2} \frac 1{|\xi|} \big| \widehat{B}_0 \big| \,\mathrm{d}\xi.
	\end{align*}
Therefore, we obtain	
		$\sum_{\gamma}\mathcal{I}^{\gamma}(\bbR^2) \lesssim \| \mathbf{u}_0 \|_{H^m} + \||\xi|^{-1}\widehat{B}_0\|_{L^1}.$ Now we estimate $\mathcal{J}^{\pm}(\bbR^2).$ With the definition of $\mathbf{P}(\xi)$ and Young's convolution inequality, we have
	\begin{align*}
		 \mathcal{J}^{+}(\bbR^2) &\leq \left\| \int_{\bbR^2}\int_0^t e^{-\frac {t-\tau}2} \sqrt{|\xi_1|} \big| \wwhat{v\otimes \mathbf{u}}(\xi,\tau) \big| \, \mathrm{d}\tau\mathrm{d}\xi \right\|_{L^{\frac 43}(0,T)} \\
		&\lesssim \int_0^T\int_{\bbR^2} \big| \sqrt{|\xi_1|} \widehat{v} \big| * \big| \widehat{\mathbf{u}} \big| (\xi,t) \, \mathrm{d}\xi \mathrm{d}t + \int_0^T\int_{\bbR^2} \big| \widehat{v} \big| * \big| \sqrt{|\xi_1|} \widehat{\mathbf{u}} \big| (\xi,t) \, \mathrm{d}\xi \mathrm{d}t \\
		&\lesssim G(T) \int_0^T \int_{\bbR^2} \big| \widehat{\partial_1 v}(\xi,t) \big| \,\ud \xi \ud t +  G(T) \| \widehat{v} \|_{L_T^{\frac 43} L^1} + G(T)^2.
	\end{align*}
	Using \eqref{Omg_1} and \eqref{Omg_2}, we obtain 
	\begin{align*}
		\mathcal{J}^{-}(\Omega_1\cup \Omega_2)\lesssim \left\| \int_0^t e^{-\frac {t-\tau}2} \sqrt{|\xi_1|} \big| \wwhat{v\otimes \mathbf{u}}(\xi,\tau) \big| \, \mathrm{d}\tau \right\|_{L_T^{\frac 43} L^1} \lesssim G(T)( \|\widehat{\partial_1 v}\|_{L_T^1 L^1} +  \| \widehat{v} \|_{L_T^{\frac 43} L^1}) + G(T)^2.
	\end{align*}
	Furthermore, we leverage \eqref{Omg_3} and the definition of $\mathbf{P}(\xi)$ to yield
	\begin{equation}\label{detail_est2}
	\begin{aligned}
		\mathcal{J}^{-}(\Omega_3)&\lesssim \left\| \int_{\bbR^2}\int_0^t \big| \xi_1^{\frac 32} e^{-\xi_1^2(t-\tau)} \wwhat{v\otimes \mathbf{u}}(\xi,\tau) \big| \, \mathrm{d}\tau\mathrm{d}\xi \right\|_{L^{\frac 43}(0,T)} \\
		&\lesssim \int_0^T\int_{\bbR^2} \big| \wwhat{v\otimes \mathbf{u}}(\xi,t) \big| \, \mathrm{d}\xi \mathrm{d}t \\
		&\lesssim \| \widehat{v} \|_{L_T^{\frac 43} L^1} \| \widehat{\bfu} \|_{L_T^4 L^1}.
	\end{aligned}
	\end{equation}
	It suffices to bound $\mathcal{K}^{\pm}(\bbR^2)$. Due to Young's convolution inequality, we compute
	\begin{equation*}
	\mathcal{K}^{+}(\bbR^2) \lesssim \left\| \int_0^t e^{-\frac {t-\tau}2} \sqrt{|\xi_1|} \big| \wwhat{B \otimes \mathbf{u}}(\xi,\tau) \big| \, \mathrm{d}\tau \right\|_{L_T^{\frac 43} L^1} \lesssim \left\| \sqrt{|\xi_1|} \big| \wwhat{B\otimes \mathbf{u}} \big|\right\|_{L_T^{\frac 43} L^1} \lesssim G(T) \| \widehat{\partial_1 B} \|_{L_T^{\frac 43} L^1}
 \end{equation*}
	From \eqref{Omg_1} and \eqref{Omg_2} we deduce that
	\begin{equation}
	\mathcal{K}^{-}(\Omega_1\cup \Omega_2) \lesssim \left\| \int_{\bbR^2}\int_0^t e^{-\frac {t-\tau}2} \big| \sqrt{|\xi_1|} \wwhat{B \otimes \mathbf{u}}(\xi,\tau) \big| \, \mathrm{d}\tau\mathrm{d}\xi \right\|_{L^{\frac 43}(0,T)} \lesssim G(T)\| \widehat{\partial_1 B} \|_{L_T^{\frac 43} L^1}.
	\end{equation}
	On the other hand, \eqref{Omg_3} and the definition of $\bfP$ gives
	\begin{equation*}
		\big| \langle \mathbf{P}(\xi)\wwhat{(B\cdot \nabla)\widetilde{\mathbf{u}}}, \mathbf{a}_{-}^2 \rangle \langle \mathbf{b}_{-}^2,e_2 \rangle \big|\leq C \big| \wwhat{\partial_1^2(B\otimes v)} \big| + C \big| \wwhat{\partial_1^3(B\otimes B)} \big|, \qquad \xi \in\Omega_3.
	\end{equation*}
	This implies that 
	\begin{equation*}
	\mathcal{K}^{-}(\Omega_3) \lesssim 
		\left\| \int_0^t \big| \xi_1^{\frac 32} e^{-\xi_1^2(t-\tau)} \wwhat{B\otimes v}(\xi,\tau) \big| \, \mathrm{d}\tau \right\|_{L_T^{\frac 43} L^{1}(\Omega_3)} + \left\| \int_0^t \big| \xi_1^{\frac 32} e^{-\xi_1^2(t-\tau)} \wwhat{\partial_1B\otimes B}(\xi,\tau) \big| \, \mathrm{d}\tau \right\|_{L_T^{\frac 43} L^{1}(\Omega_3)}.
	\end{equation*}
The two terms on the right-hand side of the above inequality can be bounded by
 $\| \widehat{v} \|_{L_T^{\frac 43} L^1} \| \widehat{B} \|_{L_T^4 L^1}$ and $\| \widehat{\partial_1 B} \|_{L_T^{\frac 43} L^1} \| \widehat{B} \|_{L_T^4 L^1}$, respectively. This concludes that $\sum_{\gamma=\pm}\mathcal{K}^{\gamma}(\bbR^2) \lesssim G(T) \| \widehat{v} \|_{L_T^{\frac 43} L^1} + G(T) \| \widehat{\partial_1 B} \|_{L_T^{\frac 43} L^1}.$ Collecting the previous estimates, we complete the proof of the bound for $\| \frac {|\widehat{v}_2|}{\sqrt{|\xi_1|}} \|_{L_T^{\frac 43} L^1}$ in Proposition~\ref{prop_v}.

	The next claim we prove here is that $\| \frac {|\widehat{v}_2|}{\sqrt{|\xi_1|}} \|_{L^2(0,T;L^2_{\xi_1}L^1_{\xi_2})} \lesssim \| \bfu_0 \|_{H^m} + \| \frac {|\widehat{\bfu}_0|}{\sqrt{|\xi|}} \|_{L^2_{\xi_1}L^1_{\xi_2}} + G(T)^2.$ From \eqref{df_u} and the simple fact $\widehat{v}_2 = \sum_{\gamma=\pm} \langle  \widehat{\mathbf{u}},\mathbf{a}_{\gamma}^2 \rangle \langle \mathbf{b}_{\gamma}^2, e_2 \rangle$,
	we have
	\begin{equation*}
		\left\| \frac 1{\sqrt{|\xi_1|}} \big|\sum_{\gamma=\pm}\langle \wwhat{ \mathbf{u}},\mathbf{a}_{\gamma}^2 \rangle \langle \mathbf{b}_{\gamma}^2, e_2 \rangle \big|\right\|_{L^2(0,T;L^2_{\xi_1}L^1_{\xi_2})} \leq \sum_{\gamma=\pm}\left(\mathcal{I}^{\gamma}(\bbR^2) + \mathcal{J}^{\gamma}(\bbR^2) + \mathcal{K}^{\gamma}(\bbR^2)\right),
	\end{equation*}
	where for any simply connected smooth domain $Q\subseteq \bbR^2$ we set
 \begin{equation*}
 \begin{split}
     \mathcal{I}^{+}(Q)&:=\left\| \frac 1{\sqrt{|\xi_1|}} \big|e^{-\lambda_{+}t} \langle \widehat{\mathbf{u}}_0, e_2 \rangle \big| \right\|_{L^2(0,T;L^2_{\xi_1}L^1_{\xi_2})}, \\
     \mathcal{I}^{-}(Q)&:=\left\| \int_{Q} \frac 1{\sqrt{|\xi_1|}} \big|(e^{-\lambda_{-}t}-e^{-\lambda_{+}t})  \langle  \widehat{\mathbf{u}}_0(\xi),\mathbf{a}_{-}^2(\xi) \rangle \langle \mathbf{b}_{-}^2, e_2 \rangle \big| \,\mathrm{d}\xi \right\|_{L^2(0,T;L^2_{\xi_1}L^1_{\xi_2})}
     \end{split},
 \end{equation*}
 almost identically to the previous setting for the estimate of $\| \frac {|\widehat{v}_2|}{\sqrt{|\xi_1|}} \|_{L_T^{\frac 43} L^1}$, except that here we measure with respect to the norm of $L_T^2 L^2_{\xi_1}L^1_{\xi_2}$ instead of the norm of $L_T^{\frac 43} L^1$. The other terms, $\mathcal{J}^{\pm}(Q)$ and $\mathcal{K}^{\pm}(Q)$ are defined correspondingly to the nonlinearities that appear in \eqref{df_u}, in view of the anisotropic decomposition \eqref{decom_est_1}, as we did repeatedly throughout the paper. By the relation $|e^{-\lambda_{+}(\xi)t}| \leq e^{-\frac t2}$ and the incompressibility 
condition $\operatorname{div}v=0$, we have that
	\begin{align*}
	\mathcal{I}^{+}(\bbR^2) \lesssim \| e^{-\frac t2} \|_{L^2(0,T)} \left\| \frac 1{\sqrt{|\xi_1|}}\big|\widehat{v}_2(0) \big| \right\|_{L^2_{\xi_1}L^1_{\xi_2}} \lesssim \left\| \frac {\sqrt{|\xi_1|}}{|\xi|}\big|\widehat{v}_0 \big| \right\|_{L^2_{\xi_1}L^1_{\xi_2}}.
	\end{align*}
	By \eqref{Omg_1} and \eqref{Omg_2}, we see that
	\begin{align*}
	\mathcal{I}^{-}(\Omega_1\cup\Omega_2) \lesssim \left\| (e^{-\lambda_{-}t} - e^{-\lambda_{+}t}) \langle  \widehat{\mathbf{u}}_0,\mathbf{a}_{-}^2 \rangle \langle \mathbf{b}_{-}^2, e_2 \rangle  \right\|_{L_T^2 L^2_{\xi_1}L^1_{\xi_2}(\Omega_1 \cup \Omega_2)} \lesssim \left\| e^{-\frac t2} \big| \widehat{\bfu}_0 \big| \right\|_{L_T^2 L^2_{\xi_1}L^1_{\xi_2}}\lesssim \left\| \bfu_0 \right\|_{H^m}.
	\end{align*}
	By \eqref{v_initial} we also obtain
	\begin{align*}
		\mathcal{I}^{-}(\Omega_3) \lesssim \bigg\| \big|\xi_1^{\frac 32} e^{-\xi_1^2t} \big| \big(\big|\widehat{v}_0 \big| + \frac 1{|\xi|} \big| \wwhat{ B}_0 \big| \big) \bigg\|_{L_T^2 L^2_{\xi_1}L^1_{\xi_2}} \lesssim \| v_0 \|_{H^m} + \left\| \frac {\sqrt{|\xi_1|}}{|\xi|}\big|\widehat{B}_0 \big| \right\|_{L^2_{\xi_1}L^1_{\xi_2}}.	\end{align*}
This gives the desired bound for $\mathcal{I}^{\pm}(\bbR^23).$ To bound $\mathcal{J}^{\pm}(\bbR^2)$, we start by using the definition of $\mathbf{P}(\xi)$ and Young's convolution inequality so that we get
	\begin{align*}
\mathcal{J}^{+}(\bbR^2) &\lesssim \left\| \int_0^t e^{-\frac {t-\tau}2} \sqrt{|\xi_1|} \big\| \wwhat{v\otimes \mathbf{u}}(\xi,\tau) \big\|_{L^1_{\xi_2}} \, \mathrm{d}\tau \right\|_{L^2(0,T;L^2_{\xi_1})} \\
		&\lesssim \big\| \sqrt{|\xi_1|} \big|\widehat{v}\big| * \big| \widehat{\mathbf{u}} \big| \big\|_{L^2(0,T;L^2_{\xi_1}L^1_{\xi_2})} +  \big\| \big| {v} \big| * \sqrt{|\xi_1|} \big| \widehat{\mathbf{u}} \big| \big\|_{L^2(0,T;L^2_{\xi_1}L^1_{\xi_2})} \\
		&\lesssim G(T)^2.
	\end{align*}
Furthermore, relying on \eqref{Omg_1} and \eqref{Omg_2}, we estimate as 
	\begin{equation*}
		\mathcal{J}^{-}(\Omega_2\cup\Omega_3) \lesssim \left\| \int_0^t e^{-\frac {t-\tau}2} \sqrt{|\xi_1|} \big\| \wwhat{v\otimes \mathbf{u}}(\xi,\tau) \big\|_{L^1_{\xi_2}} \, \mathrm{d}\tau \right\|_{L_T^2 L^2_{\xi_1}} \lesssim G(T)^2
	\end{equation*}
Using \eqref{Omg_3} and the definition of $\mathbf{P}(\xi)$ gives
	\begin{equation}\label{detail_est3}
\mathcal{J}^{-}(\Omega_3) \lesssim \left\| \int_0^t \xi_1^{\frac 32} e^{-\xi_1^2(t-\tau)} \big\| \wwhat{v\otimes \mathbf{u}}(\xi,\tau) \big\|_{L^1_{\xi_2}} \, \mathrm{d}\tau \right\|_{L_T^2 L^2_{\xi_1}} \lesssim \big\| \wwhat{v\otimes \mathbf{u}} \big\|_{L_T^{\frac 43} L^2_{\xi_1}L^1_{\xi_2}} \lesssim \| \widehat{v} \|_{L_T^2 H^m} \| \widehat{B} \|_{L_T^4 L^1},
	\end{equation}
 and thus $\mathcal{J}^{-}(\Omega_3) \lesssim G(T)^2$. Then it follows that $\sum_{\gamma=\pm} \mathcal{J}^{\gamma}(\bbR^2) \lesssim G(T)^2,$ as desired. It remains to bound $\mathcal{K}^{\pm}(\bbR^2)$. It is immediate to see that
	\begin{equation*}
\mathcal{K}^{+}(\bbR^2) \lesssim \left\| \int_0^t e^{-\frac {t-\tau}2} \sqrt{|\xi_1|} \big| \wwhat{B \otimes \mathbf{u}}(\xi,\tau) \big\|_{L^1_{\xi_2}} \, \mathrm{d}\tau\right\|_{L_T^2 L^2_{\xi_1}} \lesssim \left\| \int_{\bbR^2}\sqrt{|\xi_1|} \big| \wwhat{B\otimes \mathbf{u}} \big| \, \mathrm{d}\xi \right\|_{L_T^2 L^2_{\xi_1}L^1_{\xi_2}}  \lesssim G(T)^2. 
\end{equation*} Exploiting \eqref{Omg_1} and \eqref{Omg_2}, we get
	\begin{equation*}
\mathcal{K}^{-}(\Omega_2\cup\Omega_3)	\lesssim \left\| \int_0^t e^{-\frac {t-\tau}2} \sqrt{|\xi_1|} \big| \wwhat{B \otimes \mathbf{u}}(\xi,\tau) \big\|_{L^1_{\xi_2}} \, \mathrm{d}\tau\right\|_{L^2(0,T;L^2_{\xi_1})}  \lesssim G(T)^2.
	\end{equation*}
	On the other hand, \eqref{Omg_3} and the definition of $\bfP$ gives
	\begin{equation*}
		\big| \langle \mathbf{P}(\xi)\wwhat{(B\cdot \nabla)\widetilde{\mathbf{u}}}, \mathbf{a}_{-}^2 \rangle \langle \mathbf{b}_{-}^2,e_2 \rangle \big|\leq C \big| \wwhat{\partial_1^2(B\otimes v)} \big| + C \big| \wwhat{\partial_1^3(B\otimes B)} \big|, \qquad \xi \in\Omega_3.
	\end{equation*}
 Such an observation allows us to have that $\mathcal{K}^{-}(\Omega_3)$ is bounded by a positive constant times
 \begin{equation*}
     \begin{split}
          \left\| \int_0^t \big| \xi_1^{\frac 32} e^{-\xi_1^2(t-\tau)} \wwhat{B\otimes v}(\xi,\tau) \big| \, \mathrm{d}\tau \right\|_{L_T^2 L^2_{\xi_1}L^1_{\xi_2}} + \left\| \int_0^t \big| \xi_1^{\frac 32} e^{-\xi_1^2(t-\tau)} \wwhat{\partial_1B\otimes B}(\xi,\tau) \big| \, \mathrm{d}\tau \right\|_{L_T^2 L^2_{\xi_1}L^1_{\xi_2}},
     \end{split}
 \end{equation*}
 which is bounded by a positive constant times $G(T)^2 + \| \widehat{\partial_1 B} \|_{L_T^{\frac{4}{3}} L^1} G(T)$. This gives the desired bound $\sum_{\gamma=\pm}\mathcal{K}^{\gamma}(\bbR^2) \lesssim \| \widehat{\partial_1 B} \|_{L_T^{\frac{4}{3}} L^1} G(T) + G(T)^2.$ Collecting all the previous estimates, we establish the desired bound for $\| \frac {|\widehat{v}_2|}{\sqrt{|\xi_1|}} \|_{L^2(0,T;L^2_{\xi_1}L^1_{\xi_2})}.$ Finally, we estimate $\| \widehat{v} \|_{L_T^{\frac 43} L^1}$. Applying Duhamel's principle to the $v$ equations in \eqref{mhd_eq}, we have $$\widehat{v} = e^{-t} \widehat{v}_0 - \int_0^t e^{-(t-\tau)} \wwhat{\bbP\, (v \cdot \nabla)v} \,\ud \tau + \int_0^t e^{-(t-\tau)} \wwhat{\bbP \, (B\cdot \nabla)B} \,\ud \tau + \int_0^t e^{-(t-\tau)} \wwhat{\partial_1B} \,\ud \tau.$$ With the help of Young's convolution inequality, one can easily show that $$\| e^{-t} \widehat{v}_0 \|_{L_T^{\frac 43} L^1} \leq \| v_0 \|_{H^m},$$ $$\left\| \int_0^t e^{-(t-\tau)} \wwhat{\bbP\, (v \cdot \nabla)v} \,\ud \tau \right\|_{L_T^{\frac 43} L^1} \lesssim \| \wwhat{(v \cdot \nabla)v} \|_{L_T^{\frac 43} L^1} \lesssim G(T)^2,$$ and $$\left\| \int_0^t e^{-(t-\tau)} \wwhat{\partial_1B} \,\ud \tau \right\|_{L_T^{\frac 43} L^1} \lesssim \| \wwhat{\partial_1B} \|_{L_T^{\frac 43} L^1}.$$ The algebraic relationship $(B \cdot \nabla)B = B_1 \partial_1B + B_2 \partial_2 B$ allows us to compute as 
	\begin{align*}
	    \left\| \int_0^t e^{-(t-\tau)} \wwhat{\bbP\, (B \cdot \nabla)B} \,\ud \tau \right\|_{L_T^{\frac 43} L^1} &\lesssim \| \wwhat{(B \cdot \nabla)B} \|_{L_T^{\frac 43} L^1} \\
	    &\lesssim \| \widehat{B_1} \|_{L^4_T L^1} \| \partial_1 B \|_{L^2_T L^1} +  \| \widehat{B}_2 \|_{L^2_T L^1} \| \widehat{\partial_2B} \|_{L^4_T L^1} \\
	    &\lesssim G(T)^2 + G(T)\| \widehat{B}_2 \|_{L^2_T L^1}.
	\end{align*}
This finishes the proof.

\subsection{Proof of Remark~\ref{X^m}}\label{sec_rmk}
We compute
\begin{equation*}
    \begin{split}
        \|\frac{\widehat{B_0}}{|\xi|}\|_{L^1(\bbR^2)} = \|\frac{\widehat{B_0}}{|\xi|}\|_{L^1(|\xi|< 1)} + \|\frac{\widehat{B_0}}{|\xi|}\|_{L^1(|\xi|\geq 1)} \lesssim \|\widehat{B_0}\|_{L^\infty} + \|\widehat{B_0}\|_{L^1(|\xi|\geq 1)} \lesssim \|B_0\|_{L^1} + \|B_0\|_{H^m}.
    \end{split}
\end{equation*}
In the above calculation, we use the integrability of $|\xi|^{-1}$ in the region $|\xi|<1$ and  the bound $|\xi|^{-1}\leq 1$ when $|\xi|\geq 1$. The integrability of $\frac{1}{(1+|\xi|^2)^{m}}$ for $|\xi|\geq 1$ is also used. Now we bound $\lnrm{|\xi|^{-1}\sqrt{|\xi_1|}\widehat{\mathbf{u}_0}}_{L_{\xi_1}^2 L_{\xi_2}^1}$ based on the decomposition $\bbR^2 = D_1 \cup D_2 \cup D_3$, where $D_1 = \{ |\xi_1| \geq 1 \}$, $D_2 = \{|\xi_1| \leq 1\} \cap \{|\xi_2| \leq 1\}$, and $D_3 = \bbR^2\setminus(D_1\cup D_2)$.
 It is immediate to see that
\begin{equation*}
    \begin{split}
        \lnrm{|\xi|^{-1}\sqrt{|\xi_1|}\widehat{\mathbf{u}_0}}_{L_{\xi_1}^2 L_{\xi_2}^1(\bbR^2)} &\lesssim \|\wwhat{\mathbf{u}_0}\|_{L_{\xi_1}^2 L_{\xi_2}^1(\bbR^2)} + \|\wwhat{\mathbf{u}_0}\|_{L^\infty}\||\xi_2|^{-\frac12}\|_{L_{\xi_2}(|\xi_2|\leq 1)} + \lnrm{\frac{\|\sqrt{|\xi_1|}\wwhat{\mathbf{u}_0}\|_{L_{\xi_1}^2}}{|\xi_2|}}_{L_{\xi_2}(|\xi_2|\geq 1)} \\
        &\lesssim \|\mathbf{u}_0\|_{H^m} + \|\mathbf{u}_0\|_{L^1} +\|\sqrt{|\xi_1}\widehat{\mathbf{u}_0}\|_{L^2} \\
        &\lesssim \|\mathbf{u}_0\|_{H^m\cap L^1},
    \end{split}
\end{equation*}
where we use, for the first inequality, Minkowski's integral inequality and the three simple facts $\frac{\sqrt{|\xi_1|}}{|\xi|}\lesssim 1$ for $\xi\in D_1$, $\frac{\sqrt{|\xi_1|}}{|\xi|} \lesssim \frac{1}{\sqrt{|\xi_2|}}$ for $\xi \in D_2$, and $\frac{\sqrt{|\xi_1|}}{|\xi|}\lesssim \frac{\sqrt{|\xi_1|}}{|\xi_2|}$ for $\xi\in D_3$. In the second inequality, we just observe $|\xi_2|^{-1} \lesssim 1$ if $|\xi_2|\geq 1$.  
It remains to control $\||\xi_1|^{-\frac12}\widehat{B_0}\|_{L^1}.$ 
%\begin{equation*}
%   \int_{\bbR^2} \frac{1}{\sqrt{|\xi_1|}} |\mathscr{F} B_0| \,\ud \xi \leq \| B_0 \|_{H^m \cap L^1}.
%\end{equation*}
We can similarly show as before that 
\begin{equation*}
    \int_{D_1} \frac{1}{\sqrt{|\xi_1|}} |\mathscr{F} B_0| \,\ud \xi \leq \int_{\bbR^2} |\mathscr{F} B_0| \,\ud \xi \leq \| B_0 \|_{H^m}
\end{equation*}
and
\begin{equation*}
    \int_{D_2} \frac{1}{\sqrt{|\xi_1|}} |\mathscr{F} B_0| \,\ud \xi \leq \| \mathscr{F} B_0 \|_{L^{\infty}} \int_{D_2} \frac{1}{\sqrt{|\xi_1|}} \,\ud \xi \leq C \| B_0 \|_{L^1}.
\end{equation*}
To estimate the integral on $D_3$, we need to divide it into the two parts $\{|\xi_1||\xi_2|^4 \geq 1\}$ and $\{|\xi_1||\xi_2|^4 \leq 1\}$ again. In the first case, we have for $m > 3$ that $$\int_{D_3 \cap \{|\xi_1||\xi_2|^4 \geq 1\}} \frac{1}{\sqrt{|\xi_1|}} |\mathscr{F} B_0| \,\ud \xi \leq \int_{\bbR^2} |\xi_2|^2 |\mathscr{F} B_0| \,\ud \xi \leq \| \mathscr{F}B_0 \|_{H^m}.$$ Meanwhile, for any $\epsilon \in (0,\frac{1}{4})$,  it holds $$\int_{D_3 \cap \{|\xi_1||\xi_2|^4 \leq 1\}} \frac{1}{\sqrt{|\xi_1|}} |\mathscr{F} B_0| \,\ud \xi \leq \| \mathscr{F}B_0 \|_{L^{\infty}} \int_{D_3} |\xi_1|^{-(1-\epsilon)} |\xi_2|^{-2(1-2\epsilon)} \,\ud \xi \leq C \| B_0 \|_{L^{1}}.$$ Thus, we obtain the claim.

% ----------------------------------------------------------------
\bibliographystyle{amsplain}
\bibliography{2DMHD}
% ----------------------------------------------------------------

\end{document}